\newcommand\be{\begin{equation}}
\newcommand\ee{\end{equation}}
\newcommand\bea{\begin{eqnarray}}
\newcommand\eea{\end{eqnarray}}
\newcommand\beaa{\begin{eqnarray*}}
\newcommand\eeaa{\end{eqnarray*}}
\newcommand\beba{\begin{equation}\left\{\begin{array}{rcl}}
\newcommand\eeba{\end{array}\right.\end{equation}}
\newcommand\bebaa{\begin{equation*}\left\{\begin{array}{rcl}}
\newcommand\eebaa{\end{array}\right.\end{equation*}}
\newcommand\blue{\color{black}}
\newcommand\red{\color{black}}
\newcommand\R{\mathbb{R}}
\newcommand\bR{{\mathbb{R}}}
\newcommand\bN{{\mathbb{N}}}
\newcommand\bS{{\mathbb{S}}}
\newcommand\cH{{\mathcal{H}}}
\newcommand\cP{{\mathcal{P}}}
\newcommand{\eps}{\epsilon}
\newcommand{\dsum}{\displaystyle \sum}
\newcommand{\dlim}{\displaystyle \lim}
\theoremstyle{thmstyleone}
\newtheorem{lemma}{Lemma}
\newtheorem{assumption}{Assumption}
\newtheorem{corollary}{Corollary}
\theoremstyle{thmstyleone}%
\newtheorem{theorem}{Theorem}
\newtheorem{proposition}[theorem]{Proposition}%
\theoremstyle{thmstyletwo}%
\newtheorem{example}{Example}%
\newtheorem{remark}{Remark}%
\theoremstyle{thmstylethree}%
\begin{document}
\title[Reaction, diffusion and nonlocal interactions in high-dimensional space]{Reaction, diffusion and nonlocal interactions in high-dimensional space}



\author[1]{\fnm{Hiroshi} \sur{Ishii}}\email{hiroshi.ishii@es.hokudai.ac.jp}

\author*[2]{\fnm{Yoshitaro} \sur{Tanaka}}\email{yoshitaro.tanaka@gmail.com}


\affil[1]{\orgdiv{Research Center of Mathematics for Social Creativity, Research Institute for Electronic Science}, \orgname{Hokkaido University}, \orgaddress{\street{Kita 12, Nishi 7, Kita-ku}, \city{Sapporo}, \postcode{060-0812}, \state{Hokkaido}, \country{Japan}}}

\affil*[2]{\orgdiv{Department of Complex and Intelligent Systems, School of Systems Information Science}, \orgname{Future University Hakodate}, \orgaddress{\street{116-2 Kamedanakano-cho}, \city{Hakodate}, \postcode{ 041-8655}, \state{Hokkaido}, \country{Japan}}}



\abstract{
{\red In this paper we consider the mathematical relationship between nonlocal interactions of convolution type and multiple diffusive substances in high dimensions.
Motivated by that the nonlocal evolution equations reproduce similar patterns to those in reaction-diffusion systems,}
{\red we approximate nonlocal interactions in evolution equations by the solution to a reaction-diffusion system in any dimensional Euclidean space.}
The key aspect of this approach is that {\red any absolutely integrable radial}  kernels can be approximated by a linear combination of specific Green functions. 
This enables us to demonstrate that any nonlocal interactions of convolution type can be approximated by a linear sum of auxiliary diffusive substances.
{\red Moreover, we show that the parameters in the reaction-diffusion system can be specified depending on the kernel shape up to three dimensions.}
Our results establish a connection between a broad class of nonlocal interactions and diffusive chemical reactions in dynamical systems.
}

\keywords{Nonlocal interaction, Reaction-diffusion system, Approximation, Nonlocal evolution equation}


\pacs[MSC Classification]{ 35A35, 35K57, 35R09, 92B05}

\maketitle

\backmatter

\section{Introduction}\label{sec:intro}
Various interactions plays a crucial role in phenomena such as developmental processes of multicellular organisms, behavior of biological populations, cell migration and neuronal information processing.
The evolutionary dynamics of each factor involved in these phenomena depend on their interactions.
These interactions give rise to spatiotemporal patterns and complex behaviors in each system.
In particular, some interactions affect the distant objects globally in space as observed in the aforementioned phenomena.
Such interactions are often referred as to nonlocal interactions or long-range interactions.
The presence of the nonlocal interactions has been suggested by biological experiments in various contexts, including neural firing in the brain, pigment cells in skin of zebrafish, cell migration and cell adhesion.

\cite{K1953} experimentally demonstrated that the light response of a ganglion cell in the receptive field of the cat brain exhibits local excitation and lateral inhibition.
This experimental result can be represented as a function, as shown in Fig. \ref{fig1} (a), where local excitation and lateral inhibition correspond to positive and negative values, respectively. 
This function is referred to as the Mexican-hat function due to its shape, or the LALI function (standing for the local activation and lateral inhibition ) in context of pattern formation.
\cite{NTKK2009} experimentally demonstrated that the presence of local inhibition and lateral activation among the pigment cells in the skin of the zebrafish through laser ablation control experiments.
\cite{HWLNHPK2014} further showed that pigment cells in skin of zebrafish extend cellular projections 
to transmit survival signals across the pigment stripe patterns.
These studies on  pigment cell interactions and skin patterns formation in zebrafish are comprehensively reviewed by \cite{WK2015}. 
\cite{MH2015} experimentally demonstrated that variations in the strength of cell adhesion molecules in artificially cultured cells lead to changes in adhesion surface patterns.
It is also suggested that these cultured cells may sense the surrounding cell density by extending cellular projections longer than their body size.
\begin{figure}[bt]
    \begin{center}
    \begin{tabular}{ccc}
\includegraphics[width=5cm, bb=0 0 359 372]{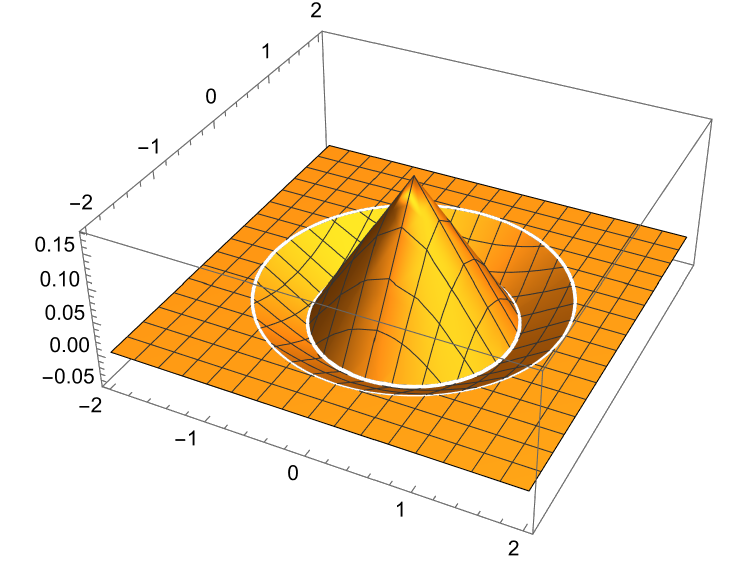}
&\includegraphics[width=5cm, bb=0 0 804 643]{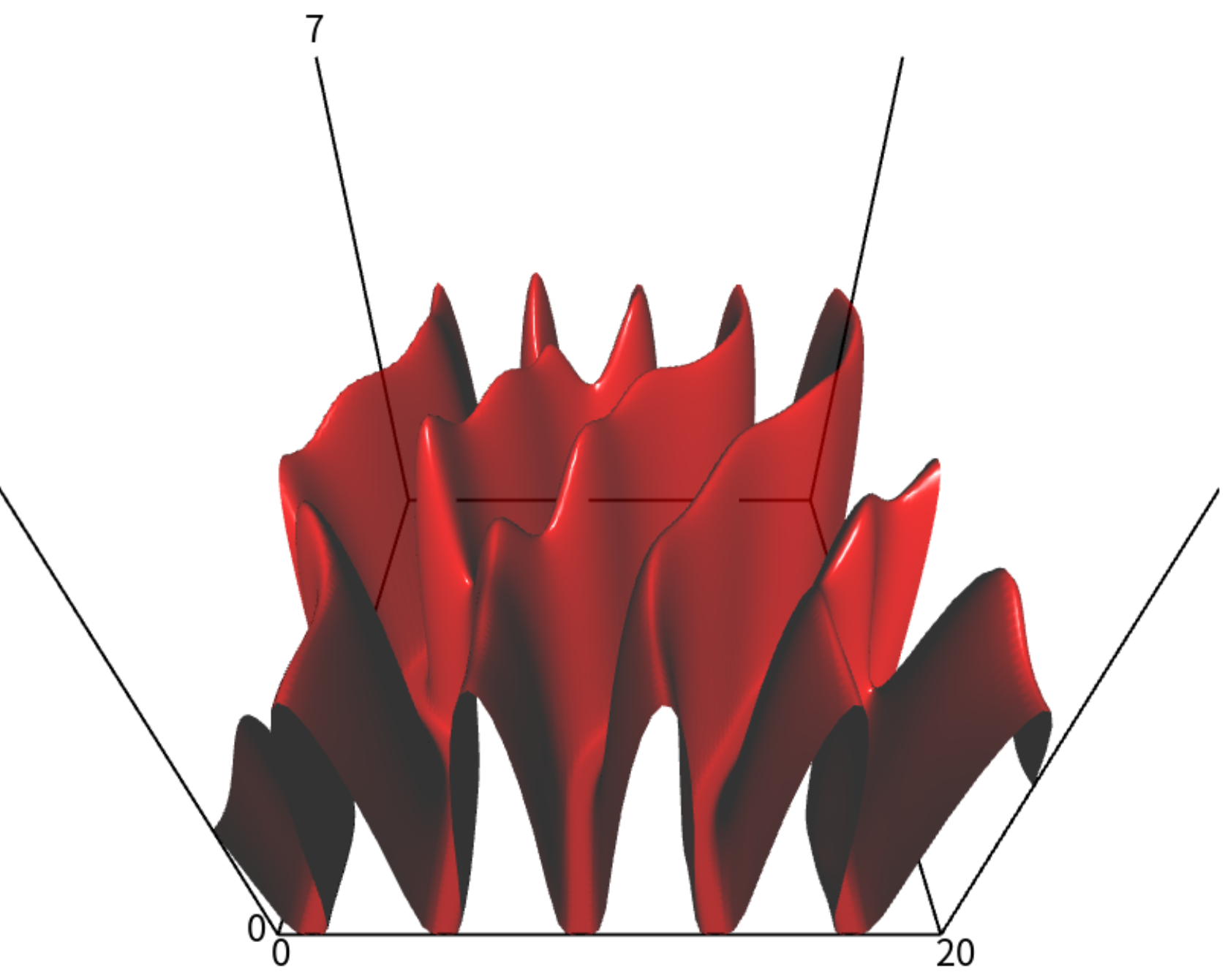}\\
(a)&(b)
    \end{tabular}
    \end{center}
    \caption{Profile of a compact Mexican-hat function and pattern of a solution to \eqref{eq:non-growth}.
    $K(x) = \mu ( a_1/(\pi R_1^3) (R_1 - |x|)\chi_{B(R_1)}(x) - a_2/(\pi R_2^3) (R_2 - |x|)\chi_{B(R_2)}(x) )$ where $0<R_1<R_2$ is radius, $B(R)$ is a ball with radius $R$, $\chi_{B(R)}$ is the characteristic function, and $a_1, a_2$ and $\mu$ are constant. 
    (a) $R_1=1, R_2=1.5,a_1=a_2=\mu=1$.
    (b) A solution of \eqref{eq:non-growth} with $R_1=1.5, R_2=3, a_1=1.2, a_2=1, \mu=50$, and $f(u)=2u(1-u^2)$.
    }
    \label{fig1}
\end{figure}

The nonlocal interaction is often modeled by the spatial convolution with an appropriate integral kernel representing distance-dependent weight and a variable representing the factor or density of individual organism. 
Let $u=u(t,x)$ be the unknown variable at position $x$ at time $t\ge 0$, and we assume that $K \in L^1(\R^n)$ is a radial integral kernel.
In this paper, we mainly treat the nonlocal interactions described by the following form:
\begin{equation}\label{ex:kernel}
		(K*u)(t,x) := \int_{\bR^n} K(x-y)u(t,y)dy.
\end{equation}
Various mathematical models have been proposed and analyzed from the aforementioned biological backgrounds.

\cite{Amari} proposed a neural field equation of nonlocal type with the convolution of the Mexican-hat kernel and the Heaviside function, and derived the condition for the existence of traveling wave solutions. 
\cite{CSB} derived the equation of motion for the interface dynamics of the planar neural field, and demonstrated the numerical simulations and linear stability analysis.
The following model has been proposed to describe the behavior including nonlocal dispersion when an individual organism makes a long-range jump:
\begin{equation}\label{ex:nonlocal}
	u_t = d\Delta u + K*u + f(u),
\end{equation}
where $d \ge 0$ is the diffusion coefficient and $f$ is the reaction term.
The nonlocal model for the plant dispersal has been proposed by \cite{AIM2018}.
The existence and wave speed of the traveling wave solutions to this type of model have been analyzed by \cite{BFRW1997,CD,EGIW,HMMV}.
In particular, \cite{EI} have proposed equations of motion for the interactions between pulse solutions or traveling wave solution to this type of models with the sign changed kernels.
Additionally, a continuation method have been proposed that can convert the spatially discretized models into the form of \eqref{ex:nonlocal} while conserving the discreteness information by \cite{EISTWY}.
From this method it has been theoretically shown that the intercellular interactions such as diffusion, lateral inhibition on uniform and nonuniform lattice, and signal transduction through cellular projections can be described in the form of \eqref{ex:kernel}.

Mathematical models of nonlocal saturation and nonlocal growth rate have been proposed by \cite{Berestycki} and \cite{NTY1}, respectively:
\begin{align}
	&u_t = d\Delta u + (1- K*u )u,\label{eq:non-satu}\\
	&u_t = d\Delta u +  (K*u )u + f(u).\label{eq:non-growth}
\end{align}
The stationary solution and traveling wave solution of \eqref{eq:non-satu} have been analyzed in \cite{Berestycki}.
It has been rigorously shown that the instability for the stationary constant solution induced by a nonlocal interaction can be regarded as the diffusion-driven instability proposed by \cite{T1952} in the nonlocal evolution equations including \eqref{eq:non-growth} through the reaction-diffusion approximation by \cite{NTY1}.
Figure \ref{fig1} (b) shows a numerical example of the pattern.
\cite{Kondo} proposed a nonlocal evolution equation that combines the nonlocal interaction and the cut function motivated by pattern formations.
This paper reported that the model can replicate various patterns such as those observed in animal skins
simply by changing the shape of the integral kernel even though the equation has only one variable.

\cite{MH2015} and \cite{CMSTT2019} proposed the following mathematical model for the cell adhesion and cell sorting phenomena: 
\begin{align}\label{eq:cell-ad}
	u_t = \Delta u^m - \nabla \cdot (u (1-u) \nabla( K*u ) ),
\end{align}
where $m \ge 1$ is a constant.
Integral kernels with compact support as in Fig. \ref{fig1} (a) have been proposed to formulate the cell-cell interactions by \cite{CMSTT2019}.
Moreover, it has been reported that the above model can replicate the cell adhesion phenomena qualitatively and almost quantitatively.

As discussed above, nonlocal interactions can directly model interactions between cells, the potentials between particles, and the intrinsic interactions in density or concentration fields that derive pattern formation by appropriately selecting the shape of the integral kernel.
These nonlocal evolution equations can reproduce the various patterns depending on the shape of the integral kernel.
For examples, by using the Mexican-hat kernel as shown in Fig. \ref{fig1} (a), a scalar nonlocal evolution equation can generate the pattern when diffusion-induced instability  proposed by \cite{T1952} occurs. 
The relationships between differential equations
and nonlocal interactions have been investigated.
The relationship between nonlocal interactions and differential operators has been shown through the Taylor expansion by \cite{Murray}.
\cite{EIKMT} proposed a methodology to extract the pattern-generating information from an arbitrary dimensional network with spatial interactions by representing it as the shape of an integral kernel in a convolution.
This methodology succeeded in deriving the Mexican-hat kernel from general reaction-diffusion system.
\cite{NTY1,NTY2} demonstrated that in one-dimensional spaces, the nonlocal evolution equations with arbitrary even kernel can be approximated by a reaction-diffusion system with multiple auxiliary factors.
This theory reveals that various types of nonlocal interactions can be effectively described using multi-component systems introducing diffusive auxiliary factors.
However,
nonlocal interactions are frequently observed in high-dimensional spatial settings in biological phenomena.

Motivated by these backgrounds, as a natural extension to the study with one-dimensional case, we aim to investigate the relationship between nonlocal interactions of convolution type and reaction-diffusion system with multiple components in high-dimensional spaces.
We approximate nonlocal evolution equations with various integral kernels by a reaction-diffusion system introduced auxiliary diffusive substances in a singular limit.
We show that the solutions to nonlocal evolution equations with any radial kernel can be approximated by that to a reaction-diffusion system in any dimensions.
Moreover, depending on the given integral kernel, we demonstrate that the parameters of the approximating reaction-diffusion system can be explicitly determined up to three dimensions.
The pivotal idea of this result is that any radial integral kernel in $L^1( \R^n)$ space can be approximated by a superposition of the Green functions corresponding to the quasi-steady state of an auxiliary reaction-diffusion equation.
Although each Green function in high-dimensional cases has singularity at the origin, it can be demonstrated that its linear sum can approximate radial continuous functions.
By using this result, we will specify the mathematical relationship between nonlocal interactions and multiple chemical diffusive substances in dynamical systems in high-dimensional spaces.

This paper is organized as follows.
In Section \ref{sec:ms_r}, we explain the mathematical settings and state our main results. 
In Section \ref{sec:rda}, we prove the singular limit of the reaction diffusion system.
In Section \ref{sec:ker}, we give the proof for the result of the Green function expansion in any dimensions up to three.
Thereafter, we perform the numerical simulations of the Green function expansions in Section \ref{sec:main}.
We give some remarks and conclude this paper in Section \ref{sec:con}.

\section{Mathematical setting and main results}\label{sec:ms_r}
Based on the motivation introduced in Section \ref{sec:intro},
we consider the following nonlocal reaction-diffusion equation:
\be\label{pro:non}
\begin{cases}\tag{NP}
\dfrac{\partial u}{\partial t} = D\Delta u + f(u,K*u),  & (t>0,\ x\in\bR^n), \vspace{2mm}\\
u(0,x)=u_0(x), &(x\in\bR^n),
\end{cases}
\ee
where $u= u(t,x)\in\bR\ (t>0,\ x\in\bR^n, \ {\red n \in \bN})$, $D$ is a positive constant, 
the function $f\in \mathrm{Lip}(\bR^2;\bR)$ satisfies $f(0,0)=0$, 
and $K(x)$ is a radial function in $L^1(\bR^n)$.
Here, there exists a constant $C_f>0$ such that
\beaa
|f(u_1,v_1) - f(u_2,v_2)| \le C_f ( |u_1-u_2| + |v_1-v_2|)
\eeaa
for all $(u_1,v_1), (u_2,v_2)\in\bR^2$.
For the case that $f$ is the local Lipschitz as introduced in models in Section \ref{sec:intro}, we provide Remark \ref{rem:Lip} below. 

We first consider the existence of the solution to \eqref{pro:non}.
Denote the set of all bounded continuous functions by $BC(\bR^n)$.
Introducing the heat kernel as 
\[
H(t,x;D):= \dfrac{1}{(4\pi D t)^{n/2}} e^{-|x|^2/4Dt},
\]
we define the following map  as
\begin{equation*}
    {\red \cP[\phi](t)}:= H(t;D)*u_0 + \int^{t}_{0} H(t-s;D)* f(\phi(s),(K*\phi)(s)) ds 
\end{equation*}
for $\phi \in C([0,T];BC(\R^n) \cap L^p(\R^n))$ with $1 \le p \le +\infty$.
We say that a function $u \in C([0,T];BC(\R^n) \cap L^p(\R^n))$ for $T>0$ is a mild solution to \eqref{pro:non}, provided $u = \cP[u]$.
Then we obtain the following existence result.
\begin{theorem}\label{thm:exi}
For any $T>0$ and $1\le p\le +\infty$, there exists a unique mild solution $u\in C([0,T];BC(\bR^n)\cap L^p(\bR^n))$ to \eqref{pro:non} with an initial datum $u_0\in BC(\bR^n)\cap L^{p}(\bR^n)$.
This mild solution $u$ belongs to $ C^{1,2}((0,T]\times \bR^n)$, 
that is, $u$ is the unique classical solution to \eqref{pro:non}.
Moreover, we have
\beaa
  \| u (t) \|_{L^q}  \le e^{C_f (1+ \|K\|_{L^1}) t} \|u_0\|_{L^q}
\eeaa
for any $p\le q\le +\infty$ and $t \in [0,T]$.
\end{theorem}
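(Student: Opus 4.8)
The plan is to construct the mild solution via the Banach fixed-point theorem applied to $\cP$, upgrade it to a classical solution by a parabolic bootstrap, and obtain the $L^q$ estimate from Grönwall's inequality. For the fixed point, fix $T>0$ and work in the Banach space $X:=C([0,T];BC(\bR^n)\cap L^p(\bR^n))$. Two facts would be used throughout: first, $\|H(t;D)\|_{L^1}=1$, so Young's inequality gives $\|H(t;D)*g\|_{L^r}\le\|g\|_{L^r}$ for every $r\in[1,\infty]$ and $t>0$; second, $\|K*g\|_{L^r}\le\|K\|_{L^1}\|g\|_{L^r}$. Since $f(0,0)=0$ and $f$ is globally Lipschitz, one has the pointwise bound $|f(u,v)|\le C_f(|u|+|v|)$, whence $\|f(\phi(s),(K*\phi)(s))\|_{L^r}\le C_f(1+\|K\|_{L^1})\|\phi(s)\|_{L^r}$. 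Combining these growth bounds with the mapping and time-continuity properties of the convolution heat semigroup on $BC(\bR^n)\cap L^p(\bR^n)$, I would verify that $\cP$ maps $X$ into itself.

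To obtain a contraction, the Lipschitz assumption gives the pointwise estimate $|f(\phi,K*\phi)-f(\psi,K*\psi)|\le C_f(|\phi-\psi|+|K*(\phi-\psi)|)$, so that, after convolving with $H(t-s;D)$ and applying Young's inequality in both $L^p$ and $L^\infty$,
\[
\|\cP[\phi](t)-\cP[\psi](t)\|_{BC\cap L^p}\le C_f(1+\|K\|_{L^1})\int_0^t\|\phi(s)-\psi(s)\|_{BC\cap L^p}\,ds .
\]
Taking the supremum over $t\in[0,T']$ makes $\cP$ a contraction once $T'$ is sufficiently small; since the constant $C_f(1+\|K\|_{L^1})$ is independent of the initial time, iterating over consecutive subintervals yields a unique fixed point on all of $[0,T]$, which is the unique mild solution, and uniqueness follows from the same estimate through Grönwall's inequality.

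The regularity bootstrap is where I expect the main obstacle to lie. The forcing term $g(s):=f(u(s),(K*u)(s))$ is a priori only bounded and continuous, which does not permit differentiating the Duhamel integral twice in space, because $\|\partial_x^2 H(t-s;D)\|_{L^1}\sim(t-s)^{-1}$ is not integrable near $s=t$. The remedy is to first establish local-in-time Hölder continuity of $u$, and hence of $g$: exploiting the analytic smoothing of the heat semigroup, $s\mapsto u(s)$ is Hölder continuous in space and time on compact subsets of $(0,T]$, and since $f$ is Lipschitz and convolution with $K$ preserves Hölder continuity, $g$ inherits a Hölder exponent $\gamma\in(0,1)$ there. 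Writing the singular part of the Duhamel integral as $\int_0^t\partial_x^2 H(t-s;D)*(g(s)-g(t))\,ds+g(t)\int_0^t\partial_x^2 H(t-s;D)\,ds$ then controls the singularity, since $(t-s)^{-1+\gamma}$ is integrable, and classical parabolic theory upgrades $u$ to $C^{1,2}((0,T]\times\bR^n)$ solving \eqref{pro:non} pointwise.

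Finally, for the $L^q$ bound with $p\le q\le+\infty$, note that $BC(\bR^n)\cap L^p(\bR^n)\subset L^q(\bR^n)$ by interpolation, so the mild identity $u=\cP[u]$ holds in $L^q$. Applying $\|H(t-s;D)*\cdot\|_{L^q}\le\|\cdot\|_{L^q}$ together with the growth estimate above gives
\[
\|u(t)\|_{L^q}\le\|u_0\|_{L^q}+C_f(1+\|K\|_{L^1})\int_0^t\|u(s)\|_{L^q}\,ds ,
\]
and Grönwall's inequality yields $\|u(t)\|_{L^q}\le e^{C_f(1+\|K\|_{L^1})t}\|u_0\|_{L^q}$, which is the stated bound.
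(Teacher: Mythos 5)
Your proposal is correct and follows the same overall architecture as the paper: Banach fixed point for existence and uniqueness of the mild solution, Young plus Gr\"onwall for the $L^q$ bound, and a parabolic bootstrap ending in Schauder theory for classical regularity. Two localized steps differ. For the contraction, you shrink the time interval and iterate, whereas the paper uses the exponentially weighted norm $\|\phi\|_{\rho}=\sup_t e^{-\rho t}(\|\phi(t)\|_{L^\infty}+\|\phi(t)\|_{L^p})$ with $\rho>C_f(1+\|K\|_{L^1})$ to get a contraction on all of $[0,T]$ in one shot; these are interchangeable. For the passage from $BC^1$ to $BC^2$ regularity of the Duhamel term, you use H\"older continuity of the forcing $g=f(u,K*u)$ together with the classical splitting that subtracts the value of $g$ at the evaluation point to tame the nonintegrable singularity $\|\partial_x^2 H(t-s)\|_{L^1}\sim (t-s)^{-1}$; the paper instead observes that, once $u(t)\in BC^1$, the Lipschitz continuity of $f$ gives $\|\nabla g(t)\|_{L^\infty}<+\infty$, moves one derivative onto $g$, and pays only the integrable cost $\|\partial_x H(t-s)\|_{L^1}\sim(t-s)^{-1/2}$. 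Your route is the more standard textbook argument and does not use differentiability of $f$; the paper's is marginally shorter because it recycles the first-derivative lemma verbatim. Both then invoke Schauder estimates (Krylov) to conclude $u\in C^{1,2}((0,T]\times\bR^n)$, so there is no gap.
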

We construct the mild solution based on the fixed point theorem for integral equations.
The proof is a standard argument and is summarized in Appendix \ref{app:non}.

Next, to specify the relationship between nonlocal interactions and reaction-diffusion system, we prepare a reaction–diffusion system for the approximation of the solution to \eqref{pro:non} with any integral kernel.
Introducing the auxiliary diffusive substances $v_j={\red v_j(t,x)}, \ (j=1,\ldots,N), \ (N\in\bN)$, we consider the following reaction-diffusion system with $N + 1$ components:
\begin{equation}\label{eq:rd}
\begin{cases}\tag{\ensuremath{\mbox{RD}_{\delta}}}
\dfrac{\partial u}{\partial t} = D\Delta u + f\left(u,\ \dsum^{N}_{j=1} \alpha_j v_j \right), \\
\dfrac{\partial v_j}{\partial t} = \dfrac{1}{\delta} (d_j \Delta v_j  - v_j + u), \quad (j=1,2,\ldots,N),
\end{cases}
(t>0,\ x\in\bR^n),
\end{equation}
where $\delta>0$, $\alpha_j \in\bR,\ d_j>0$ for $j=1,2,\ldots N$.
The idea using the reaction-diffusion equations for approximating  nonlocal interactions is same as that in \cite{NTY1,NTY2}.
The initial condition is imposed as
\be\label{init}
(u, v_1,\ldots, v_N)(0,x) = (u_0,k_1*u_0,\ldots, k_N*u_0)(x),
\ee
where $k_j,(j=1,\ldots,N)$ are defined as follows:
\bea
k_j(x) &:=& \left(\dfrac{1}{d_j}\right)^{n/2} G\left( \dfrac{|x|}{\sqrt{d_j}} \right),  \label{kj}\\
G(|x|) &:=& \dfrac{1}{(2\pi)^{n/2}}\left( \dfrac{1}{|x|}\right)^{n/2-1} M_{n/2-1}\left(|x|\right), \label{green}\\
M_{\nu}(r) &:=& \int^{+\infty}_{0} e^{-r \cosh s} \cosh(\nu s)ds.\notag
\eea
Here $M_{\nu}(r)$ is the modified Bessel function of the second kind with the order $\nu$.
For $j\in\bN$, $k_j$ is represented as
\begin{equation*}
    k_j (x) = 
\begin{cases}
    \dfrac{1}{2\sqrt{d_j}} e^{-|x|/\sqrt{d_j}}, &(n=1), \vspace{2mm} \\
    \dfrac 1 {2\pi d_j} M_0(|x|/\sqrt{d_j}), &(n=2), \vspace{2mm} \\
    \dfrac{1}{4\pi d_j |x|} e^{-|x|/\sqrt{d_j}}, &(n=3).
\end{cases}
\end{equation*}
We note that $k_j$ is the Green function of the differential operator $-d_j \Delta + 1$, that is, $w = k_j*u$ satisfies
\beaa
d_j \Delta w -w + u =0.
\eeaa
Moreover, it is known that $k_j \in L^1(\bR^n)$.
The property we use are mentioned in Subsection \ref{subsec:Green}.

As a preparation of the approximation, we show the existence of the solution to \eqref{eq:rd} with an initial condition \eqref{init}.
Define the two following maps as
\begin{align*}
&{\red \Phi[\phi](t)} := H(t;D)*u_0 + \int^{t}_{0} H(t-s;D)* f\left(\phi(s), \dsum^{N}_{j=1} \alpha_j \Psi_j[\phi](s)\right) ds, \\
&{\red \Psi_j[\phi](t)} :=  e^{-t/\delta} H(t;d_j/\delta)*(k_j*u_0) \\
& \qquad \qquad \qquad + \dfrac{1}{\delta} \int^{t}_{0} e^{-(t-s)/\delta} H(t-s;d_j/\delta)*\phi(s) ds,\quad (j=1,2,\ldots,N)
\end{align*}
for $\phi \in C([0,T];BC(\bR^n)\cap L^p(\bR^n))$ with $1 \le p \le +\infty$, respectively.
We say that a function $(u, v_1,\ldots,\ v_{N}) \in (C([0,T];BC(\bR^n)\cap L^p(\bR^n)))^{N+1}$ for $T>0$ is a mild solution to \eqref{eq:rd}, provided $u = \Phi[u]$ and $v_j = \Psi_j[u]$, respectively.
By using the Banach fixed point theorem, we obtain the existence result for \eqref{eq:rd}.
\begin{proposition}\label{pro:exi-app}
    For any $T>0$, $\delta>0$ and $1\le p\le+\infty$, there exists a unique mild solution $(u^{\delta},v^{\delta}_1,\ldots,v^{\delta}_N)\in \{C([0,T];BC(\bR^n)\cap L^p(\bR^n))\}^{N+1}$ to \eqref{eq:rd} with an initial condition \eqref{init} and $u_0\in BC(\bR^n)\cap L^p(\bR^n)$. 
    This solution belongs to $\{C^{1,2}((0,T]\times\bR^n)\}^{N+1}$.
    Moreover, we have 
    \beaa
    \| u^{\delta}(t) \|_{L^q}  &\le& \left(1 + \delta C_f \dsum^{N}_{j=1}|\alpha_j|  \right) \|u_0\|_{L^q} \exp\left( C_f \left(1 +  \dsum^{N}_{j=1}|\alpha_j|  \right) t \right),\\
    \| v^{\delta}_{j}(t) \|_{L^q}  &\le& e^{-t/\delta}\|u_0\|_{L^q} +\sup_{0\le s\le t} \| u^{\delta}(s)\|_{L^q}, \quad(j=1,2,\ldots,N)
    \eeaa
    for any $q\in [p,+\infty]$ and $t\in [0,T]$.
\end{proposition}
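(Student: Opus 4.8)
The plan is to collapse the coupled $(N{+}1)$-component system into a single fixed-point problem for $u$. Since each auxiliary variable is given explicitly by the affine map $v_j=\Psi_j[u]$, it suffices to produce $u\in X:=C([0,T];BC(\bR^n)\cap L^p(\bR^n))$ solving $u=\Phi[u]$ and then to set $v_j:=\Psi_j[u]$; here the fiber $BC(\bR^n)\cap L^p(\bR^n)$ is normed by the maximum of its $L^\infty$- and $L^p$-norms, and the Banach fixed point theorem is applied in $X$. The starting point is a short list of convolution facts, all \emph{uniform in $\delta$}: the heat kernel satisfies $\|H(t;D)\|_{L^1}=1$, so Young's inequality gives $\|H(t;D)*g\|_{L^q}\le\|g\|_{L^q}$ for every $q\in[p,+\infty]$; the Green function obeys $\|k_j\|_{L^1}=1$ (being the kernel of $-d_j\Delta+1$), so $\|k_j*u_0\|_{L^q}\le\|u_0\|_{L^q}$; and $\frac1\delta\int_0^t e^{-(t-s)/\delta}\,ds=1-e^{-t/\delta}\le1$. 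Combining these yields
\beaa
\|\Psi_j[\phi](t)\|_{L^q} &\le& e^{-t/\delta}\|u_0\|_{L^q}+\sup_{0\le s\le t}\|\phi(s)\|_{L^q},\\
\|\Psi_j[\phi_1](t)-\Psi_j[\phi_2](t)\|_{L^q} &\le& \sup_{0\le s\le t}\|\phi_1(s)-\phi_2(s)\|_{L^q},
\eeaa
with constants independent of $\delta$ and of $q\in[p,+\infty]$.

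Next I would show that $\Phi$ is a contraction. Using the global Lipschitz bound $|f(u_1,w_1)-f(u_2,w_2)|\le C_f(|u_1-u_2|+|w_1-w_2|)$ together with the $\Psi_j$-difference estimate and the $L^q$-contractivity of $H(t-s;D)*\,\cdot\,$, one obtains
\beaa
\|\Phi[\phi_1](t)-\Phi[\phi_2](t)\|_{L^q}&\le& C_f\Big(1+\dsum_{j=1}^N|\alpha_j|\Big)\,t\,\sup_{0\le s\le t}\|\phi_1(s)-\phi_2(s)\|_{L^q}.
\eeaa
Because the prefactor depends only on $C_f$ and $\sum_j|\alpha_j|$, choosing $T_0$ with $C_f(1+\sum_j|\alpha_j|)T_0<1$ makes $\Phi$ a contraction on $[0,T_0]$, and the resulting existence time is independent of both $\delta$ and $\|u_0\|$. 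Since $f$ is globally Lipschitz, this step is iterated on successive intervals of length $T_0$ (equivalently, a Bielecki weighted norm $\sup_t e^{-\lambda t}\|\cdot\|_X$ may be used) to reach any prescribed $T>0$, producing the unique mild solution $(u^{\delta},v_1^{\delta},\ldots,v_N^{\delta})$.

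The regularity claim is obtained by bootstrapping through the Duhamel representation exactly as in Theorem \ref{thm:exi}. Once $u^{\delta}$ is known to be continuous and bounded, the source $f\big(u^{\delta},\sum_j\alpha_j v_j^{\delta}\big)$ is bounded and continuous, and the smoothing of the heat semigroup upgrades both $H(t;D)*u_0$ and the Duhamel integral to $C^{1,2}((0,T]\times\bR^n)$; applying the same argument to $\Psi_j[u^{\delta}]$ (using $k_j*u_0\in BC(\bR^n)\cap L^p(\bR^n)$ by Young's inequality) gives the regularity of each $v_j^{\delta}$.

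Finally, the a priori estimates follow from Gronwall's inequality. The bound on $v_j^{\delta}$ is precisely the first displayed $\Psi_j$-estimate evaluated at $\phi=u^{\delta}$. For $u^{\delta}$, inserting $\|v_j^{\delta}(s)\|_{L^q}\le e^{-s/\delta}\|u_0\|_{L^q}+\sup_{0\le r\le s}\|u^{\delta}(r)\|_{L^q}$ into the Duhamel bound and using $\int_0^t e^{-s/\delta}\,ds\le\delta$ gives
\beaa
\sup_{0\le s\le t}\|u^{\delta}(s)\|_{L^q}&\le&\Big(1+\delta C_f\dsum_{j=1}^N|\alpha_j|\Big)\|u_0\|_{L^q}\\
&&+\,C_f\Big(1+\dsum_{j=1}^N|\alpha_j|\Big)\int_0^t \sup_{0\le r\le s}\|u^{\delta}(r)\|_{L^q}\,ds,
\eeaa
and Gronwall's inequality then yields exactly the stated bound. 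Conceptually the argument is the standard Duhamel/contraction scheme, so the main point demanding care is purely a matter of bookkeeping: keeping the contraction factor and the existence time $T_0$ independent of $\delta$ (so that the solution persists on a fixed interval as $\delta\to0$, which is what the later singular-limit analysis requires), while tracking the single $\delta$-dependent prefactor $1+\delta C_f\sum_j|\alpha_j|$ arising from the initial-data term of $\Psi_j$.
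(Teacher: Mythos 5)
Your proposal is correct and follows essentially the same route as the paper: a Banach fixed point argument for $u=\Phi[u]$ (with $v_j:=\Psi_j[u]$) using Young's inequality and the uniform bounds $\|k_j\|_{L^1}=1$ and $\frac1\delta\int_0^te^{-(t-s)/\delta}ds\le1$, followed by a Gronwall argument for the $L^q$ estimates and the same bootstrap for regularity. The only cosmetic differences are that the paper works directly with a Bielecki-type weighted norm $\|\cdot\|_\rho$ rather than iterating over short intervals, and obtains the term $\int_0^t\|u^{\delta}(s)\|_{L^q}\,ds$ by exchanging the order of integration in the double integral rather than via the supremum form of Gronwall.
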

The proof is similar to the proof of Theorem \ref{thm:exi} and its main part about the boundedness is described in Appendix \ref{app:rd}.

For constants $\{\alpha_j\}_{1\le j\le N}$, 
and positive constants $\{d_j\}_{1\le j \le N}$, we set the linear combination of the Green functions as
\be\label{def:kn}
K_N(x):=\dsum^{N}_{j=1} \alpha_j k_j(x).
\ee
For the approximation of nonlocal interactions by the reaction-diffusion system
we show the singular limit as $\delta\to +0$ in \eqref{eq:rd} as follows.
\begin{proposition}\label{lem:err2}
    Let $T>0$, $\delta>0$, $1\le p\le +\infty$ and $u_0\in BC(\bR^n)\cap L^p(\bR^n)$.
    Suppose that $(u^{\delta},v^{\delta}_1,\ldots,v^{\delta}_N)\in \{C([0,T];BC(\bR^n)\cap L^p(\bR^n))\}^{N+1}$ is the solution to \eqref{eq:rd} with an initial condition \eqref{init}.
    Then, there exists $C_{1}=C_{1}(f,D,\{\alpha_j\}_{1\le j\le N},\{d_j\}_{1\le j\le N},T)$ such that
    \beaa
    \sup_{0\le t\le T} \|u^{\delta}(t)-u^{0}(t)\|_{L^p} &\le& C_{1}\|u_0\|_{L^p}\delta, \\
    \sup_{0\le t\le T} \|v^{\delta}_{j}(t)-k_j*u^{0}(t)\|_{L^p} &\le& C_{1}\|u_0\|_{L^p}\delta    
    \eeaa
    hold, where $u^{0}$ is the solution to \eqref{pro:non} with $K=K_N$.
\end{proposition}
This proposition shows the relationship between the solutions to \eqref{eq:rd} and \eqref{pro:non} with $K_N$.
Hence, it indicates that the solution to \eqref{eq:rd} is sufficiently close to that of \eqref{pro:non} with $K_N$.
Next, to approximate the solution to \eqref{pro:non} with any integral kernel by that to \eqref{eq:rd}, we prepare the following lemma.
\begin{lemma}\label{lem:err1}
    For $1\le p\le +\infty$, let $u$ be the solution to \eqref{pro:non} with
    an initial datum $u_0 \in BC(\bR^n)\cap L^p(\bR^n)$.
    Then, for any $T>0$, there exists $C_{2}=C_{2}(f,K,T)>0$ such that
    \beaa
    \sup_{0\le t\le T}\|u(t) - u^{0}(t) \|_{L^p} \le C_{2} e^{C_f \|K-K_N\|_{L^1}T}\|K-K_N\|_{L^1} \|u_0\|_{L^{p}} 
    \eeaa
    holds, where $u^{0}\in C([0,T];BC(\bR^n)\cap L^1(\bR^n))$ is the solution to \eqref{pro:non} with $K=K_N$.
\end{lemma}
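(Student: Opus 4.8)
The plan is to exploit that $u$ and $u^{0}$ solve \eqref{pro:non} with the \emph{same} initial datum $u_0$ and the same diffusion, differing only through the kernel in the nonlocal term; hence their difference is forced entirely by $K-K_N$. First I would subtract the two mild-solution identities $u=\cP[u]$ and $u^{0}=\cP[u^{0}]$ (the latter with $K$ replaced by $K_N$). The common heat-semigroup term $H(t;D)*u_0$ cancels, so, writing $w:=u-u^{0}$,
\[
w(t) = \int_0^t H(t-s;D) * \bigl[\, f(u(s),(K*u)(s)) - f(u^{0}(s),(K_N*u^{0})(s)) \,\bigr]\, ds.
\]
Taking $L^p$ norms and using Young's inequality together with $\|H(t-s;D)\|_{L^1}=1$ removes the heat kernel, leaving the time integral of the $L^p$ norm of the bracket.

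Next I would apply the global Lipschitz bound on $f$ with constant $C_f$ to the bracket, producing the pointwise estimate $C_f\bigl(|w(s)| + |(K*u)(s)-(K_N*u^{0})(s)|\bigr)$. The key algebraic step is to split the nonlocal difference by inserting $K*u^{0}$,
\[
K*u - K_N*u^{0} = K*(u-u^{0}) + (K-K_N)*u^{0} = K*w + (K-K_N)*u^{0},
\]
so that a second application of Young's inequality gives
\[
\|(K*u)(s)-(K_N*u^{0})(s)\|_{L^p} \le \|K\|_{L^1}\|w(s)\|_{L^p} + \|K-K_N\|_{L^1}\|u^{0}(s)\|_{L^p}.
\]
Collecting terms yields an integral inequality $\|w(t)\|_{L^p} \le a\int_0^t \|w(s)\|_{L^p}\,ds + \int_0^t b(s)\,ds$ with $a=C_f(1+\|K\|_{L^1})$ and forcing $b(s)=C_f\|K-K_N\|_{L^1}\|u^{0}(s)\|_{L^p}$.

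To control the source $b$, I would invoke the a priori bound of Theorem \ref{thm:exi} applied to $u^{0}$ (the solution with kernel $K_N$), namely $\|u^{0}(s)\|_{L^p} \le e^{C_f(1+\|K_N\|_{L^1})s}\|u_0\|_{L^p}$, and then use the triangle inequality $\|K_N\|_{L^1} \le \|K\|_{L^1}+\|K-K_N\|_{L^1}$ to peel off the factor $e^{C_f\|K-K_N\|_{L^1}T}$ while absorbing $e^{C_f(1+\|K\|_{L^1})T}$ into a constant depending only on $f,K,T$. Finally, the integral form of Gronwall's inequality (with nondecreasing forcing $\int_0^t b$) converts the integral inequality into $\|w(t)\|_{L^p} \le e^{at}\int_0^t b(s)\,ds$, which after the above substitution is exactly the claimed estimate with $C_2 = C_f\,T\,e^{2C_f(1+\|K\|_{L^1})T}$.

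The computation is routine once assembled; the only place needing genuine care—and thus the main obstacle—is the bookkeeping of the $\|K-K_N\|_{L^1}$ dependence. Because $C_2$ is required to depend only on $f,K,T$ and not on $N$ or on the particular combination \eqref{def:kn}, one cannot simply hide $\|K_N\|_{L^1}$ inside the constant; it must instead be separated via the triangle inequality so that its contribution surfaces precisely as the explicit factor $e^{C_f\|K-K_N\|_{L^1}T}$. This separation is what makes the bound degrade gracefully and, in particular, what guarantees the genuine $O(\|K-K_N\|_{L^1})$ decay of the error as $K_N\to K$ in $L^1$.
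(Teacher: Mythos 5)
Your proposal is correct and follows essentially the same route as the paper: subtract the two mild-solution identities, apply the Lipschitz bound on $f$ and Young's inequality, split the nonlocal difference by an add-and-subtract, run Gronwall, and control the remaining solution norm via Theorem \ref{thm:exi}, finishing with the triangle inequality $\|K_N\|_{L^1}\le\|K\|_{L^1}+\|K-K_N\|_{L^1}$ to isolate the factor $e^{C_f\|K-K_N\|_{L^1}T}$. The only (immaterial, symmetric) difference is the choice of cross term: you write $K*u-K_N*u^{0}=K*w+(K-K_N)*u^{0}$, putting $\|K\|_{L^1}$ into the Gronwall rate and $u^{0}$ into the forcing, whereas the paper inserts $K_N*u$, obtaining $K_N*u_{\mathrm{err}}+(K-K_N)*u$ with $\|K_N\|_{L^1}$ in the rate and $u$ in the forcing; both yield the stated estimate with a constant depending only on $f,K,T$.
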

This implies that the difference of the solutions to \eqref{pro:non} with $K$ and $K_N$ can be bounded by the difference of $K$ and $K_N$ in $L^1(\R^n)$ space.
{\red We thus identify the class of kernels that can be approximated by $K_N$ in the sense of the $L^1$-norm. First, we have the following as a general result.
\begin{theorem}\label{thm:ker-gen}
    Let $K\in L^1(\bR^n)$ be a radial function.
    Then, for any $\eps>0$, there exist $N\in\bN$, constants $\{\alpha_j\}_{1\le j \le N}$ and positive constants $\{d_j\}_{1\le j \le N} $ such that
    \beaa
    \|K - K_N\|_{L^1} <\eps
    \eeaa
    holds, where $K_N$ is defined in \eqref{def:kn}.
\end{theorem}
This ensures that we can approximate any radially symmetric kernel $K$ belonging to $L^1(\bR^n)$ with $K_N$ to arbitrary accuracy.
However, as shown in the proof in Section \ref{sec:ker}, 
$\{\alpha_j\}$ and $\{d_j\}$ are not explicitly determined, and it is not clear how they should be chosen to obtain an error estimate that depends on $N$.

Here, in order to {\blue seek the parameters $\{\alpha_j, d_j \}$}, 
we introduce the following assumptions for $K$ depending on the dimension up to three.
}


\begin{assumption}\label{ass:kernel}
    Let $K\in L^1(\bR^n)$ be a radial function. Let us denote $K(x)=J(|x|)$.
\begin{itemize}
    \item When $n=1$, we assume that $J\in C([0,+\infty))$ and that 
    $\dlim_{r\to+\infty} e^{r}J(r)$ exists. Then, for $\lambda \in (0,1]$ we set 
    \[
    h(\lambda):= \dfrac{J(-\log \lambda)}{\lambda}, 
    \]
    where we define $h(0):= \dlim_{r\to+\infty} e^{r}J(r)$;
    \item When $n=2$, we assume that $J\in  C^1([0,+\infty))$, and that there exists $\alpha>1/2$ such that 
    \[ 
    \dlim_{r\to+\infty} r^\alpha e^{r}J'(r)
    \] exists. Then, introducing the following function 
    \beaa
    A(r) := -\dfrac{2r}{\pi} \int^{+\infty}_{0} J'(r \cosh s)  ds, 
    \eeaa
    we set for $\lambda \in (0,1]$, 
    \beaa
    h(\lambda):= \dfrac{A(-\log \lambda)}{\lambda}.
    \eeaa 
    Here $h(0):=\dlim_{r\to+\infty} e^{r}A(r)=0$;
    \item When $n=3$, we assume that $J\in C((0,+\infty))$ and that both 
    $\dlim_{r\to+0} rJ(r)$ and $\dlim_{r\to+\infty} re^{r}J(r)$
    exist. Then, for $\lambda \in (0,1)$ we set
    \beaa
    h(\lambda):= -(\log\lambda )\dfrac{J(-\log \lambda)}{\lambda}, 
    \eeaa 
    where we define $h(0):= \dlim_{r\to+\infty} re^{r}J(r)$ and $h(1):= \dlim_{r\to+0} rJ(r)$.
\end{itemize}
\end{assumption}

Using this assumption, we can estimate the error.
The following error estimate provided by an expansion of the Green function $k_j$ is one of our main results.
\begin{theorem}\label{thm:est_poly}
Let $n\in\{1,2,3\}$ and $d_j = j^{-2}$ for $j\in\bN$. 
Let Assumption \ref{ass:kernel} be enforced.
Then, for any $N\in\bN$ and constants $\{\alpha_j\}_{1\le j \le N+1}$, 
\beaa
\left\|K - K_{N+1}  \right\|_{L^1} \le 
\dfrac{2\pi^{n/2}}{\Gamma(n/2)} \max_{\lambda\in[0,1]} | h(\lambda) - P_{N}(\lambda) |
\eeaa
holds, where $P_{N}(\lambda)$ is the polynomial defined as
\beaa
P_N(\lambda) := \dsum^{N}_{j=0} \alpha_{j+1} c_{j,n} \lambda^{j}, \quad
c_{j,n} :=
\begin{cases}
    \dfrac{j+1}{2}, &(n=1), \vspace{2mm} \\
    \dfrac{(j+1)^2}{2\pi}, &(n=2), \vspace{2mm} \\
    \dfrac{(j+1)^2}{4\pi}, &(n=3).
\end{cases}
\eeaa
\end{theorem}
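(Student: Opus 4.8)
The plan is to reduce the $n$-dimensional $L^1$ estimate to a one-dimensional polynomial approximation on $[0,1]$ via the change of variables $\lambda = e^{-r}$. Passing to polar coordinates for the radial profile of $K - K_{N+1}$ produces the surface measure of the unit sphere, $2\pi^{n/2}/\Gamma(n/2)$, which is exactly the constant in the statement; so it suffices to show that the residual radial integral is bounded by $\max_{\lambda\in[0,1]}|h(\lambda) - P_N(\lambda)|$. Continuity of $h$ on the closed interval $[0,1]$, which makes this maximum finite, is precisely what the endpoint limits in Assumption~\ref{ass:kernel} provide.

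I would first dispose of $n=1$ and $n=3$, where the Green profiles are elementary: since $d_j = j^{-2}$, one has $k_j(x) = \tfrac{j}{2}e^{-j|x|}$ for $n=1$ and $k_j(x) = \tfrac{j^2}{4\pi|x|}e^{-j|x|}$ for $n=3$. Inserting these, factoring the appropriate power of $r$, and substituting $\lambda = e^{-r}$ turns each $e^{-jr}$ into the monomial $\lambda^j$ and turns $J$ (respectively $rJ$ for $n=3$) into $\lambda h(\lambda)$, so the integrand becomes $|h(\lambda)-P_N(\lambda)|$ times the weight $1$ for $n=1$ and $-\log\lambda$ for $n=3$. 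One then pulls out $\max_{\lambda\in[0,1]}|h-P_N|$ and uses $\int_0^1 d\lambda = 1$ and $\int_0^1(-\log\lambda)\,d\lambda = 1$; the constants $c_{j,n}$ are exactly the bookkeeping factors produced by this substitution after the reindexing $j\mapsto j-1$.

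The delicate case is $n=2$, where $k_j(x) = \tfrac{j^2}{2\pi}M_0(j|x|)$ and no elementary substitution applies. The key is the pair of identities $M_0(jr) = \int_r^\infty e^{-j\rho}/\sqrt{\rho^2-r^2}\,d\rho$ (immediate from $\rho = r\cosh s$) and the reconstruction formula $J(r) = \int_r^\infty A(\rho)/\sqrt{\rho^2-r^2}\,d\rho$ expressing $J$ through the transform $A$ of Assumption~\ref{ass:kernel}; the latter is the statement that the inverse Abel transform defining $A$ composes with the Abel transform to the identity, and it is here that the decay hypothesis $\lim_{r\to\infty} r^\alpha e^r J'(r)$ with $\alpha>1/2$ is used. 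By linearity the profile $F$ of $K - K_{N+1}$ then satisfies $F(r) = \int_r^\infty A_F(\rho)/\sqrt{\rho^2-r^2}\,d\rho$ with $A_F(\rho) = A(\rho) - \sum_j \tfrac{\alpha_j j^2}{2\pi}e^{-j\rho}$; I would confirm $A[M_0(j\cdot)] = e^{-j\cdot}$ by reducing the resulting double hyperbolic integral through the product formula $M_{1/2}(z)^2 = 2\int_0^\infty M_1(2z\cosh t)\,dt$ together with $M_{1/2}(w) = \sqrt{\pi/2w}\,e^{-w}$.

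Finally, for $n=2$ I would bound $2\pi\int_0^\infty |F(r)|\,r\,dr$ by the triangle inequality inside the reconstruction integral and apply Fubini over $0\le r\le\rho$; the inner integral $\int_0^\rho r/\sqrt{\rho^2-r^2}\,dr = \rho$ collapses everything to $2\pi\int_0^\infty |A_F(\rho)|\,\rho\,d\rho$, which has exactly the structure of the $n=3$ integrand, so the substitution $\lambda = e^{-\rho}$ closes the estimate with the weight $-\log\lambda$ and monomials $\lambda^j$ as before. I expect the main obstacle to be the rigorous justification of the reconstruction formula and the Fubini interchange under Assumption~\ref{ass:kernel}, in particular the integrability of $A_F(\rho)\,\rho$ near $\rho=0$ and $\rho=\infty$, whereas the $n=1$ and $n=3$ reductions are essentially direct computations.
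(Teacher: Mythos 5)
Your proposal is correct and follows essentially the same route as the paper: the $n=1,3$ cases are the direct substitution $\lambda=e^{-r}$ with the weights $1$ and $-\log\lambda$, and the $n=2$ case rests on the same Abel-type reconstruction $J(r)=\int_r^\infty A(\rho)(\rho^2-r^2)^{-1/2}\,d\rho$ (the paper's Lemma \ref{lemm:A}), your Fubini reduction to $2\pi\int_0^\infty|A_F(\rho)|\rho\,d\rho$ being just the $\rho=r\cosh s$ reparametrization of the paper's double integral $\int_0^\infty\int_0^\infty re^{-r\cosh s}\,ds\,dr=1$. The only superfluous step is the proposed verification of $A[M_0(j\cdot)]=e^{-j\cdot}$ via the product formula for $M_{1/2}$: once you use the representation $M_0(jr)=\int_r^\infty e^{-j\rho}(\rho^2-r^2)^{-1/2}\,d\rho$, linearity already gives $A_F$ and no further identity is needed.
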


This theorem shows that the approximation error between $K$ and $K_{N+1}$ is evaluated in terms of the absolute error between the function $h$ determined from the integral kernel and the polynomial $P_N$. 
This allows us to examine how to determine $\{\alpha_j\}_{1\le j\le N+1}$ based on polynomial approximation theory.
Since the properties of the Green function depend on spatial dimensions,
we need Assumption \ref{ass:kernel} for the  approximation by $K_{N+1}$.
In the cases that $n=1$ or $n=3$, $k_j$ satisfies this assumption for all  $0<d_j\le 1$.
On the other hand, when $n=2$, $k_j$ does not satisfy the assumption for any $d_j>0$.
In this sense, the case that $n=2$ is a technical assumption.
The proof is given in Section \ref{sec:ker}.

We provide examples of how to choose the coefficients $\{\alpha_j\}_{1\le j \le N+1}$. 
Before stating the result, we prepare some notions.
Let $h\in C([0,1])$.
We define the modulus of continuity of $h$ on $[0,1]$ as
\beaa
\omega(h,\eta):= \sup\left\{ |h(\lambda_1)-h(\lambda_2)|\ ;\ \lambda_1,\lambda_2\in [0,1],\ |\lambda_1-\lambda_2|\le \eta\right\}.
\eeaa
For $N\in\bN$, let us define constants $\beta_{j,N}[h]\ (0\le j\le N)$ as
\be\label{coef}
\beta_{j,N}[h]:=\dsum^{j}_{\nu=0} (-1)^{j-\nu}  h\left( \dfrac{\nu}{N} \right)  \dbinom{N}{j}\dbinom{j}{\nu},\quad (j=0,1,\ldots,N),
\ee
where $\binom{\cdot}{\cdot}$ is the binomial coefficient.
The constant $\beta_{j,N}[h]$ plays the role to determine the coefficients of the Bernstein polynomial below.
Furthermore, we set the following constant
\[
l_{j,N} [h]:= \frac{1}{2^{2N+1}}  \sum_{ \nu=0}^N \zeta_{\nu,N} (h) \tau_{j,\nu}^{(N+1)}, \quad (j=0,1,\ldots,N),
\]
where $\zeta_{\nu,N} (h)$ and $\tau_{j,\nu}^{(N+1)}$ are defined in Subsection \ref{sec:Lag}.
The constant $l_{j,N}[h]$ plays the role to determine the coefficients of the Lagrange polynomial with the Chebyshev nodes below.
Using these coefficients of the polynomials, we obtain the following explicit estimates.

\begin{corollary}\label{cor:ker}
Let $n\in\{1,2,3\}$ and $d_j = j^{-2}$ for $j\in\bN$. 
Let Assumption \ref{ass:kernel} be enforced.
\begin{itemize}
\item When $\alpha_j = \beta_{j-1,N}[h]/c_{j-1,n}$, for $m\in\{0,1\}$, there exists a constant $E(m)>0$ independent of $K$ such that if $h\in C^{m}([0,1])$ holds,
then 
\beaa
\left\|K - K_{N+1}  \right\|_{L^1} \le 
\dfrac{2E(m)\pi^{n/2}}{\Gamma(n/2)} N^{-m/2} \omega(h^{(m)},N^{-1/2}).
\eeaa
\item When $\alpha_j = l_{j-1,N}[h]/c_{j-1,n}$, for  $ h\in\mathrm{Lip}([0,1])$, it holds that 
\beaa
\left\|K - K_{N+1}  \right\|_{L^1} \le 
\dfrac{2 \pi^{n/2}}{\Gamma(n/2)} \Big(2+\frac{2}{\pi}\log N\Big)  \omega ( h, N^{-1} ).
\eeaa
\end{itemize}
\end{corollary}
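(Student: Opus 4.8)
The plan is to reduce both estimates to a single uniform polynomial approximation problem on $[0,1]$ and then feed two classical approximation schemes into Theorem \ref{thm:est_poly}. That theorem already bounds $\|K - K_{N+1}\|_{L^1}$ by $\frac{2\pi^{n/2}}{\Gamma(n/2)}\max_{\lambda\in[0,1]}|h(\lambda) - P_N(\lambda)|$, where $P_N(\lambda) = \sum_{j=0}^N \alpha_{j+1}c_{j,n}\lambda^j$. The two coefficient prescriptions are engineered precisely so that $\alpha_{j+1}c_{j,n} = \beta_{j,N}[h]$ (respectively $=l_{j,N}[h]$), which removes the dimension-dependent factors $c_{j,n}$ from $P_N$ and leaves a polynomial depending only on the scalar profile $h$. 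Thus the whole corollary follows once I identify $P_N$ with a known approximant of $h$ and quote its quantitative convergence rate; the dimension $n$ enters only through the already-supplied prefactor $2\pi^{n/2}/\Gamma(n/2)$.

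For the first item, after substituting $\alpha_{j+1} = \beta_{j,N}[h]/c_{j,n}$ I get $P_N(\lambda) = \sum_{j=0}^N \beta_{j,N}[h]\lambda^j$ with $\beta_{j,N}[h]$ as in \eqref{coef}. The key observation is that this is exactly the monomial expansion of the degree-$N$ Bernstein polynomial $B_N[h](\lambda) = \sum_{\nu=0}^N h(\nu/N)\binom{N}{\nu}\lambda^\nu(1-\lambda)^{N-\nu}$: expanding $(1-\lambda)^{N-\nu}$ by the binomial theorem, reindexing by the total power $j=\nu+i$, and using the identity $\binom{N}{\nu}\binom{N-\nu}{j-\nu} = \binom{N}{j}\binom{j}{\nu}$ reproduces the coefficient in \eqref{coef}. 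Once $P_N = B_N[h]$, I invoke the classical quantitative Bernstein estimates $\|h - B_N[h]\|_\infty \le E(0)\,\omega(h, N^{-1/2})$ for $h\in C([0,1])$ and $\|h - B_N[h]\|_\infty \le E(1)\,N^{-1/2}\omega(h', N^{-1/2})$ for $h\in C^1([0,1])$, with universal constants $E(0),E(1)$, and combine with Theorem \ref{thm:est_poly}.

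For the second item, substituting $\alpha_{j+1} = l_{j,N}[h]/c_{j,n}$ gives $P_N(\lambda) = \sum_{j=0}^N l_{j,N}[h]\lambda^j$, and here the task is to recognize this as the monomial form of the Lagrange interpolation polynomial $L_N[h]$ of $h$ through the $N+1$ Chebyshev nodes in $[0,1]$; this matching is where the definitions of $\zeta_{\nu,N}(h)$ and $\tau_{j,\nu}^{(N+1)}$ from Subsection \ref{sec:Lag} and the normalizing factor $2^{-(2N+1)}$ are used. With $P_N = L_N[h]$ I estimate the interpolation error through the Lebesgue constant: for any polynomial $q$ of degree at most $N$ one has $h - L_N[h] = (I - L_N)(h - q)$, hence $\|h - L_N[h]\|_\infty \le (1+\Lambda_N)\,E_N(h)$, where $\Lambda_N$ is the Lebesgue constant of Chebyshev interpolation satisfying $\Lambda_N \le 1 + \frac{2}{\pi}\log N$ and $E_N(h)$ is the best uniform error of degree $\le N$. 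A Jackson-type bound of $E_N(h)$ by $\omega(h, N^{-1})$ then produces the stated factor $(2 + \frac{2}{\pi}\log N)\,\omega(h, N^{-1})$.

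I expect the main obstacle to be the exact algebraic identification of $P_N$ with the two classical approximants, together with the accompanying constant bookkeeping. For Bernstein this is the double-sum reindexing above, which is routine. The harder case is the Chebyshev--Lagrange identity: one must verify that the specific combination $\frac{1}{2^{2N+1}}\sum_{\nu=0}^N \zeta_{\nu,N}(h)\tau_{j,\nu}^{(N+1)}$ really is the $\lambda^j$-coefficient of the Chebyshev interpolant, which forces a careful passage through the Chebyshev-polynomial expansion of the nodal basis, and that the numerical constants line up so that $1+\Lambda_N$ and the best-approximation estimate combine into exactly $2 + \frac{2}{\pi}\log N$ multiplying $\omega(h, N^{-1})$ without a spurious Jackson constant. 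Both steps are purely one-dimensional and are decoupled from the dimension $n$.
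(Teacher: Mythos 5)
Your proposal is correct and follows essentially the same route as the paper: substitute the prescribed coefficients so that $P_N$ becomes the Bernstein polynomial $B_N[h]$ (respectively the Chebyshev--Lagrange interpolant $L_N$), then combine Theorem \ref{thm:est_poly} with the quantitative approximation bounds of Lemma \ref{lem:poly} and Lemma \ref{lemm:Lag}. The only difference is that you re-derive in outline what the paper packages as those two lemmas (the monomial re-expansion of $B_N[h]$ in Subsection \ref{sec:BP} and the Lebesgue-constant estimate cited from the literature), so no genuinely new argument is involved.
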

\begin{remark}\label{rem:Cm}
 As in Lemma \ref{lem:TLag_sm}, for $h \in C^{m}[0,1]$ with $m \in \bN$ in the case of $\alpha_j = l_{j-1,N}[h]/c_{j-1,n}$, the convergence order becomes $O(N^{-m})$.
\end{remark}
This corollary shows that any nonlocal interactions can be approximated by a linear sum of $k_j$ in high-dimensional Euclidean spaces and also the convergence rate of the error can be specified.
The proof of Corollary \ref{cor:ker} is in Section \ref{sec:ker}.

Under the above mentioned settings, we obtain the main approximation result to the nonlocal problems \eqref{pro:non} by using the reaction-diffusion system \eqref{eq:rd} as follows.
\begin{theorem}\label{thm:rda}
{\red For any {\blue $n \in \bN$,} radial integral kernel $K\in L^1(\bR^n)$,}
$\eps>0$, $\delta>0$ and $T>0$, there exist $N =N(K,n,\eps)\in \bN$,  a reaction diffusion system \eqref{eq:rd} with $N+1$ components, and positive constants 
\beaa
C_{1}=C_{1}(f,D,\{\alpha_j\}_{1\le j\le N},\{d_j\}_{1\le j\le N},T),\quad C_{3}=C_{3}(f,K,T),\quad
\eeaa
such that for any $1\le p \le +\infty$,
\beaa
\sup_{0\le t\le T}\| u^{\delta}(t) - u(t)\|_{L^{p}} &\le& (C_1\delta +C_3 \eps ) \|u_0\|_{L^{p}}, \\
\sup_{0\le t\le T}\| v^{\delta}_j(t)  - (k_j*u)(t)\|_{L^{p}} &<& (C_1\delta + C_3\eps ) \|u_0\|_{L^{p}},\quad (j=1,2,\ldots,N),
\eeaa
where 
$u$ is the solution to \eqref{pro:non} with $u_0 \in BC(\bR^n)\cap L^{p}(\bR^n)$, and $(u^{\delta},v^{\delta}_{1},\ldots,v^{\delta}_{N})$ is the solution to \eqref{eq:rd} with \eqref{init}.
\end{theorem}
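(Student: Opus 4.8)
The plan is to obtain the statement as an assembly of the three preceding quantitative results through a triangle inequality, so the work here is bookkeeping rather than new analysis. Fix $\eps,\delta,T>0$ and $1\le p\le+\infty$; let $u$ denote the solution to \eqref{pro:non} with the prescribed kernel $K$, and let $u^{0}$ denote the solution to \eqref{pro:non} in which $K$ is replaced by the finite linear combination $K_N=\sum_{j=1}^{N}\alpha_j k_j$. The four moves are: (i) use the Green-function expansion to fix the reaction--diffusion system so that $K_N$ is $L^1$-close to $K$; (ii) control $\|u^{\delta}-u^{0}\|$ by the singular-limit estimate of Lemma \ref{lem:err2}; (iii) control $\|u^{0}-u\|$ by the kernel-perturbation estimate of Lemma \ref{lem:err1}; and (iv) repeat for the auxiliary components using Young's convolution inequality.

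\emph{Step 1 (choice of $N$ and of the system).} Under Assumption \ref{ass:kernel} the associated function $h$ is continuous on the compact interval $[0,1]$, hence uniformly continuous, so $\omega(h,\eta)\to0$ as $\eta\to0$. Taking $d_j=j^{-2}$ and $\alpha_j=\beta_{j-1,N}[h]/c_{j-1,n}$, Corollary \ref{cor:ker} with $m=0$ (up to the one-term index shift between $K_N$ and $K_{N+1}$) bounds $\|K-K_N\|_{L^1}$ by a multiple of $\omega(h,N^{-1/2})$, which tends to $0$ as $N\to+\infty$. Consequently we may choose $N=N(K,n,\eps)$ so large that $\|K-K_N\|_{L^1}\le\eps$. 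This selection fixes the system \eqref{eq:rd}, its coefficients $\{\alpha_j\}_{1\le j\le N}$, and therefore the constant $C_1$ furnished by Lemma \ref{lem:err2}.

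\emph{Step 2 (the estimates for $u$ and for $v_j$).} By the triangle inequality,
\[
\sup_{0\le t\le T}\|u^{\delta}(t)-u(t)\|_{L^p}\le \sup_{0\le t\le T}\|u^{\delta}(t)-u^{0}(t)\|_{L^p}+\sup_{0\le t\le T}\|u^{0}(t)-u(t)\|_{L^p}.
\]
Lemma \ref{lem:err2} bounds the first term by $C_1\|u_0\|_{L^p}\delta$, and Lemma \ref{lem:err1} bounds the second by $C_2 e^{C_f\|K-K_N\|_{L^1}T}\|K-K_N\|_{L^1}\|u_0\|_{L^p}$. Using $\|K-K_N\|_{L^1}\le\eps$ and assuming without loss of generality that $\eps\le1$, the exponential factor is at most $e^{C_fT}$, so with $C_3:=C_2 e^{C_fT}$ (depending only on $f,K,T$) the second term is at most $C_3\eps\|u_0\|_{L^p}$; summing gives the first assertion. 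For the auxiliary components, the triangle inequality together with Young's inequality yields
\[
\|v^{\delta}_j(t)-k_j*u(t)\|_{L^p}\le \|v^{\delta}_j(t)-k_j*u^{0}(t)\|_{L^p}+\|k_j\|_{L^1}\,\|u^{0}(t)-u(t)\|_{L^p},
\]
and, since $\|k_j\|_{L^1}=1$ (the total mass of the Green function of $-d_j\Delta+1$), the right-hand side is again bounded by $(C_1\delta+C_3\eps)\|u_0\|_{L^p}$ after invoking Lemmas \ref{lem:err2} and \ref{lem:err1} exactly as above.

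\emph{Main obstacle.} The assembly is routine once the three input results are in hand; the only delicate points are conceptual rather than computational. First, one must verify that Assumption \ref{ass:kernel} genuinely forces $h\in C([0,1])$ --- in particular continuity at the endpoints $\lambda=0$ and $\lambda=1$ via the prescribed limits --- since this is precisely what makes the expansion error vanish and an admissible $N$ exist. Second, one must track the dependence of the constants: $C_1$ is permitted to depend on the chosen coefficients $\{\alpha_j\}$, hence through $N$ implicitly on $\eps$, whereas $C_3$ must be kept independent of $\eps$ and $N$, which is exactly why the exponential in Lemma \ref{lem:err1} is frozen using $\eps\le1$. The substantive analytic effort resides in Lemmas \ref{lem:err2} and \ref{lem:err1} and in the expansion theorem, not in the present argument.
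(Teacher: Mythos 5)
Your proposal is correct and follows essentially the same route as the paper's own proof: choose $N$ via Corollary \ref{cor:ker} so that $\|K-K_N\|_{L^1}\le\eps$, then combine Lemma \ref{lem:err2} and Lemma \ref{lem:err1} by the triangle inequality, using Young's inequality with $\|k_j\|_{L^1}=1$ for the auxiliary components. Your explicit normalization $\eps\le 1$ to freeze the exponential factor and keep $C_3$ independent of $N$ is a small but welcome clarification that the paper leaves implicit.
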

This theorem shows that solutions to \eqref{pro:non} with any integral kernel in high-dimensional spaces can be realized by the solution to multiple component reaction-diffusion system in high-dimensional spaces.

\begin{remark}
    We can take the limit of $\delta \to +0$ in \eqref{eq:rd} since the constants $C_1$ and $C_3$ are independent of $\delta$. 
    Furthermore, after this limit, we can take the limit of  $\eps \to +0$ in $(\textup{RD}_0)$ since the constant $C_3$ is independent of $N$. 
    If Lemma \ref{lem:poly} in Subsection \ref{sec:BP} or Lemma \ref{lemm:Lag} in Subsection \ref{sec:Lag} is applied,
    Theorem \ref{thm:rda} also shows the convergence rate with respect to $\delta$ and $N$.
\end{remark}
\begin{remark}\label{rem:Lip}
    The global Lipschitz condition on $f(u,v)$ can be removed if the boundedness of the solutions can be guaranteed.
    When $f(u,v)$ is a locally Lipschitz continuous function on $\bR^2$ and $u_0\in BC(\bR^n)\cap L^{p}(\bR^n)$ for $1\le p\le +\infty$, 
    the same assertion as Theorem \ref{thm:rda} follows by choosing a sufficiently large $C_f>0$ if 
    \beaa
    \sup_{0\le t\le T} (\|u(t)\|_{L^\infty} + \|u^{0}(t)\|_{L^{\infty}} + \|u^{\delta}(t)\|_{L^{\infty}}) <+\infty
    \eeaa
    is obtained a priori for some $T>0$ and $\delta>0$.
    Here, $u^{0}$ is a solution to \eqref{pro:non} with $K=K_N$.
\end{remark}
\begin{remark}
    When $p=+\infty$ and $u_0\in BC(\bR^n)$ is a periodic function, 
    the solution is also a spatially periodic function with the same period. 
    Therefore, the approximation in Theorem \ref{thm:rda} can be used to evaluate solutions with periodic boundary conditions.
\end{remark}

The necessary lemmas for proof of Theorem \ref{thm:rda} is given in Section \ref{sec:rda} and the proof of Theorem \ref{thm:rda} is given in Section \ref{sec:main}.

\section{Error estimates}\label{sec:rda}

\subsection{Properties of the Green function}\label{subsec:Green}

Here we describe some properties of the Green function that we use.
    Some properties are also described in \cite{IT}, 
but we summarize those necessary properties for the ease of the reader.

The Fourier transform of $G(|x|)$ defined by \eqref{green} is represented by
\beaa
\hat{G}(\xi) = \mathcal{F}_n[G](\xi) &:=& \int_{\bR^n} e^{-ix\cdot \xi} G(|x|)dx \\
&=&  (2\pi)^{n/2} \left( \dfrac{1}{|\xi|}\right)^{n/2-1} \int^{+\infty}_{0} r^{n/2} J_{n/2-1}(|\xi|r) G(r)dr \\
&=&\left( \dfrac{1}{|\xi|}\right)^{n/2-1} \int^{+\infty}_{0} r J_{n/2-1}(|\xi|r) M_{n/2-1}\left(r\right)dr
\eeaa
for $|\xi|>0$,
where $J_{\nu}(r)$ is the Bessel function of the first kind with the order $\nu$.
Using formula
\beaa
\int^{+\infty}_{0} r J_{n/2-1}(|\xi|r) M_{n/2-1}\left(r\right)dr= |\xi|^{n/2-1} {}_2 F_1 \left( \dfrac{n}{2}, 1; \dfrac{n}{2} ; - |\xi|^2 \right) = \dfrac{|\xi|^{n/2-1}}{1+|\xi|^2}
\eeaa
from \S 13.45 in \cite{Watson},
we obtain $\hat{G}(\xi) = \dfrac{1}{1+|\xi|^2}$.
Here, ${}_2F_1(a,b; c ; z)$ is the hypergeometric function.
It is obvious that $G(|x|)$ is positive for $|x|>0$.
Moreover, we find that
\begin{align*}
\int_{\bR^{n}} G(|x|) dx 
&= \dfrac{2\pi^{n/2}}{\Gamma(n/2)} \int^{+\infty}_{0} r^{n-1} G(r) dr \\
&= \dfrac{1}{2^{n/2-1}\Gamma(n/2)} \int^{+\infty}_{0} r^{n/2} M_{n/2-1}(r) dr=1    
\end{align*}
by using the integral formula
\be\label{integ-form}
\int^{+\infty}_{0} r^{\mu-1} M_{\nu}(r)dr = 2^{\mu-2} \Gamma\left(\dfrac{\mu-\nu}{2}\right) \Gamma\left(\dfrac{\mu+\nu}{2}\right)
\ee
with $|\mathrm{Re}(\nu)|< \mathrm{Re}(\mu)$ from \S 13.21 in  \cite{Watson}.
In summary, $k_j$ defined in \eqref{kj} has the following properties:
\begin{lemma}\label{lem:green}
    For $j\in\bN$, we have
    \begin{itemize}
        \item[(i)] $k_j\in C(\bR^n\backslash\{0\})\cap L^1(\bR^n)$ and
        \beaa
        \mathcal{F}_n[k_j](s)= \dfrac{1}{1+d_j |\xi|^2};
        \eeaa
        \item[(ii)] $k_j(x)>0\ (x\neq 0)$ and $\|k_j\|_{L^1}=1$.
    \end{itemize}
\end{lemma}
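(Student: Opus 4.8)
The plan is to derive each statement about $k_j$ directly from the already-established properties of $G$ — its continuity off the origin, its positivity, its unit integral, and its Fourier transform $\hat{G}(\xi) = (1+|\xi|^2)^{-1}$ — by exploiting the single dilation relation $k_j(x) = d_j^{-n/2} G(|x|/\sqrt{d_j})$ from \eqref{kj} together with the change of variables $x = \sqrt{d_j}\,y$. In effect, the lemma is a scaling corollary of the computations carried out just above.

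First I would treat part (i). Because $G(|x|) = (2\pi)^{-n/2} |x|^{-(n/2-1)} M_{n/2-1}(|x|)$ and $M_{n/2-1}(r)$ is continuous for $r>0$, the function $G$ is continuous on $\bR^n\backslash\{0\}$, and since $k_j$ is merely a rescaling of $G$ it inherits continuity there. For the Fourier transform I would invoke the scaling law directly: writing $a:=\sqrt{d_j}$ and substituting $x=ay$ gives
\beaa
\mathcal{F}_n[k_j](\xi) &=& \int_{\bR^n} e^{-ix\cdot\xi} a^{-n} G(|x|/a)\,dx = \int_{\bR^n} e^{-iy\cdot(a\xi)} G(|y|)\,dy = \hat{G}(a\xi) = \dfrac{1}{1+a^2|\xi|^2} = \dfrac{1}{1+d_j|\xi|^2},
\eeaa
which is the asserted formula.

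For part (ii), positivity is immediate, since $G(r)>0$ for $r>0$ and $d_j^{-n/2}>0$ force $k_j(x)>0$ for $x\neq 0$. The $L^1$-norm follows from the same substitution $x=\sqrt{d_j}\,y$:
\beaa
\|k_j\|_{L^1} &=& \int_{\bR^n} d_j^{-n/2} G(|x|/\sqrt{d_j})\,dx = \int_{\bR^n} G(|y|)\,dy = 1,
\eeaa
using the unit-integral identity for $G$; in particular $k_j\in L^1(\bR^n)$, which also completes part (i).

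Since the genuinely analytic work — the Bessel/hypergeometric evaluations of $\hat{G}$ and of $\int_{\bR^n} G\,dx$ borrowed from \cite{Watson} — has already been carried out before the statement, I do not expect a real obstacle here. The only point needing care is the legitimacy of the change of variables in the Fourier integral, which is routine because $G\in L^1(\bR^n)$. In this sense the lemma merely packages the properties of $k_j$ in the form required by the error estimates of Lemmas \ref{lem:err2} and \ref{lem:err1}, and all the difficulty has been absorbed into the preceding identities for $G$.
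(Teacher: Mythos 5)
Your proposal is correct and matches the paper's treatment: the paper establishes $\hat{G}(\xi)=(1+|\xi|^2)^{-1}$, the positivity of $G$, and $\int_{\bR^n}G(|x|)\,dx=1$ via the Bessel-function formulas from \cite{Watson} immediately before the lemma, and then presents the lemma as a summary obtained from the dilation $k_j(x)=d_j^{-n/2}G(|x|/\sqrt{d_j})$, exactly as you do. The only difference is that you write out the change-of-variables step explicitly, which the paper leaves implicit.
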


In Section \ref{sec:ker}, we use the asymptotic properties of $M_{\nu}$.
From \cite{NIST}, it is known the asymptotic properties
\bea\label{BK:asy}
M_{\nu}(r) \simeq\sqrt{\dfrac{\pi}{2r}} e^{-r},\quad (r\to+\infty)
\eea
and
\beaa
M_{\nu}(r) \simeq 
\begin{cases}
\dfrac{\Gamma(\nu)}{2} \left(\dfrac{r}{2}\right)^{-\nu}, &(\nu>0),\\
-\log r,  &(\nu=0),
\end{cases}
\quad (r\to+0)
\eeaa
for any $\nu>0$, where $f(r)\simeq g(r) \ (r\to a)$ means
\beaa
\dlim_{r\to a} \dfrac{f(r)}{g(r)} = 1.
\eeaa

\subsection{Error estimates for reaction-diffusion approximation}
In this subsection, we consider the singular limit problem for solutions to \eqref{eq:rd} with an initial condition \eqref{init}.



\begin{proof}[Proof of Proposition \ref{lem:err2}]
    Let $w(t)= u^{\delta}(t)- u^{0}(t)$ and $z_j(t)= v^{\delta}_j(t)- (k_j*u^{0})(t)$.
    Then, we have
    \beaa
    \delta \dfrac{\partial z_j}{\partial t} = d_j \Delta z_j  -z_j + w - \delta k_j* \dfrac{\partial u^{0}}{\partial t}.
    \eeaa
    This implies that
    \beaa
    z_j (t) = \dfrac{1}{\delta} \int^{t}_{0} e^{-(t-s)/\delta} H(t-s;d_j/\delta)* \left[w(s) - \delta \left( k_j* \dfrac{\partial u^{0}}{\partial t}\right)(s)\right] ds
    \eeaa
    holds for any $j=1,2,\ldots, N$.
    Notice that
    \beaa
    k_j * \dfrac{\partial u^{0}}{\partial t}(t) &=& [k_j * (D\Delta u^{0} + f(u^{0},K_N*u^{0}))](t) \\
    &=& \dfrac{D}{d_j} u^{0}(t) + \dfrac{D}{d_j} ( k_j * u^{0})(t) + [k_j * f(u^{0},K_N*u^{0}))](t),
    \eeaa
    and then, we find that
    \be\label{ieq:time}
    \left\|\left( k_j* \dfrac{\partial u^{0}}{\partial t}\right)(t)\right\|_{L^p}
    \le  \left[ \dfrac{2D}{d_j} + C_{f} (1+\|K_N\|_{L^1}) \right] \| u^{0}(t)\|_{L^p}
    \ee
    for any $t \in (0,T]$ from the Young inequality and Lemma \ref{lem:green}. 

    Let $t\in (0,T]$. Then, we obtain that
    \be\label{ieq:zj}
    \| z_j(t) \|_{L^{p}} 
    \le \dfrac{1}{\delta} \int^{t}_{0} e^{-(t-s)/\delta} \left[ \|w(s)\|_{L^p} + \delta  \left\|\left( k_j* \dfrac{\partial u^{0}}{\partial t}\right)(s)\right\|_{L^p} \right] ds.
    \ee
    Moreover, we have
    \beaa
    \| w(t) \|_{L^{p}} 
    &\le& C_f  \int^{t}_{0} \left[ \|w(s)\|_{L^p} +  \dsum^{N}_{j=1} |\alpha_j| \|z_j(s)\|_{L^{p}} \right] ds \\
    &\le& C_f \int^{t}_{0} \|w(s)\|_{L^p}ds \\
    &&\quad + \dfrac{C_f}{\delta}  \dsum^{N}_{j=1} |\alpha_j| \int^{t}_0 \int^{s}_{0} e^{-(s-\eta)/\delta} \left[ \|w(\eta)\|_{L^p} + \delta  \left\|\left( k_j* \dfrac{\partial u^{0}}{\partial t}\right)(\eta)\right\|_{L^p} \right] d\eta ds \\
    &\le& C_f \int^{t}_{0} \|w(s)\|_{L^p}ds \\
    &&\quad + C_f  \dsum^{N}_{j=1} |\alpha_j| \int^{t}_0 (1- e^{-(t-s)/\delta}) \left[ \|w(s)\|_{L^p} + \delta  \left\|\left( k_j* \dfrac{\partial u^{0}}{\partial t}\right)(s)\right\|_{L^p} \right] ds \\
    &\le& C_f \left( 1 +  \dsum^{N}_{j=1} |\alpha_j| \right) \int^{t}_{0} \|w(s)\|_{L^p}ds 
    + \delta C_f \dsum^{N}_{j=1} |\alpha_j| \int^{t}_0 \left\|\left( k_j* \dfrac{\partial u^{0}}{\partial t}\right)(s)\right\|_{L^p} ds.
    \eeaa
    By using the Gronwall inequality, 
    we deduce
    \beaa
    \|w(t)\|_{L^p} &\le& \delta C_f \left( \dsum^{N}_{j=1} |\alpha_j| \int^{t}_0 \left\|\left( k_j* \dfrac{\partial u^{0}}{\partial t}\right)(s)\right\|_{L^p} ds \right) \exp\left( C_f \left( 1 +  \dsum^{N}_{j=1} |\alpha_j| \right) t \right). 
    \eeaa
    From \eqref{ieq:time} and Theorem \ref{thm:exi}, there exists 
    \[
    C_{11}=C_{11}(f,D,\{\alpha_j\}_{1\le j\le N},\{d_j\}_{1\le j\le N},T)>0
    \]
    such that
    \beaa
    \|w(t)\|_{L^p} \le C_{11}\|u_0\|_{L^p} \delta
    \eeaa
    holds. 
    From the similar argument for $w$ and \eqref{ieq:zj},
    there exists a positive constant $C_{12}=C_{12}(f,D,\{\alpha_j\}_{1\le j\le N},\{d_j\}_{1\le j\le N},T)$ such that
    \beaa
    \|z_j(t)\|_{L^p} \le C_{12}\|u_0\|_{L^p} \delta
    \eeaa
    holds. Thus, we obtain the desired assertion. 
\end{proof}

Next, we evaluate the continuity of the solution to \eqref{pro:non} with respect to the integral kernel in Lemma \ref{lem:err1}.

\begin{proof}[Proof of  Lemma \ref{lem:err1}]
    Let $1\le p \le +\infty$ and $t\in (0,T]$.
    Let $u_{\text{err}}(t) := u(t) - u^{0}(t)$ and $K_{\text{err}}(x):= K(x) -K_{N}(x)$.
    Then, we have
    \beaa
    |u_{\text{err}}(t)| &\le& C_{f} \int^{t}_{0} H(t-s;D)* (|u_{\text{err}}(s)| + |(K* u)(s) - (K_N*u^{0})(s)|ds \\
    &\le& C_f \int^{t}_{0} H(t-s;D)* (|u_{\text{err}}(s)| + |(K_{N}* u_{\text{err}})(s)| + |({K_{\text{err}}}*u)(s)|) ds.
    \eeaa
    From the Young inequality, we deduce
    \beaa
    \|u_{\text{err}}(t)\|_{L^{p}} &\le& C_{f} \int^{t}_{0} [ (1+\|K_N\|_{L^1}) \|u_{\text{err}}(s)\|_{L^p} + \|K_{\text{err}}\|_{L^1}\|u(s)\|_{L^{p}}] ds.
    \eeaa
    Using the Gronwall inequality and Theorem \ref{thm:exi} yields that
    \beaa
    \|u_{\text{err}}(t)\|_{L^{p}} &\le& C_f \left( \int^{t}_{0} \|u(s)\|_{L^{p}} ds\right) e^{C_f (1+\|K_N\|_{L^1})t} \|K_{\text{err}}\|_{L^1} \\
    &\le & C_f \left( \int^{t}_{0} e^{C_f (1+\|K\|_{L^1}) t} ds\right) e^{C_f (1+\|K_N\|_{L^1})t} \|K_{\text{err}}\|_{L^1} \|u_0\|_{L^p} \\
    &\le&  \dfrac{1}{1+\|K\|_{L^1}} e^{C_f (2+\|K\|_{L^1} +\|K_{N}\|_{L^1}) t} \|K_{\text{err}}\|_{L^1} \|u_0\|_{L^p} \\
    &\le&  \dfrac{1}{1+\|K\|_{L^1}} e^{C_f (2+2\|K\|_{L^1}+\|K_{\text{err}}\|_{L^1}) t} \|K_{\text{err}}\|_{L^1} \|u_0\|_{L^p}.
    \eeaa
    Thus, we obtain the desired assertion.
\end{proof}

\section{Approximation of a kernel by the Green function in $L^1(\bR^n)$}\label{sec:ker}

{\red Throughout of this section, we use the notation $K_N$ defined in \eqref{def:kn}.}


{\red 
\subsection{Kernel approximation in general dimension}

Let us explain the proof of Theorem \ref{thm:ker-gen}.
We apply the Wiener's approximation theorem:
\begin{theorem}[\cite{Wiener}]\label{thm:wiener}
    Let $F\in L^1(\bR)$. The following statements are equivalent:
    \begin{enumerate}
        \item $ \mathrm{span}\{F(s-\theta)\mid \theta\in\bR\}$ is dense in $L^1(\bR)$, i.e., for any $G\in L^1(\bR)$ and $\eps>0$,
        there exist $N\in\bN$, $\{\alpha_j\}_{1\le j\le N},\ \{\theta_j\}_{1\le j\le N} \subset\bR$ such that
        \beaa
        \left\|G - \dsum^{N}_{j=1} \alpha_j F(\cdot-\theta_j)\right\|_{L^1(\bR)}<\eps;
        \eeaa
        
        \item for any $\xi\in\bR$, we have
        \beaa
        \int_{\bR} e^{is\xi} F(s)ds \neq 0.
        \eeaa
    \end{enumerate}
\end{theorem}

This theorem provides necessary and sufficient conditions for approximating arbitrary functions in $L^1(\bR)$ by translations of functions. 
Based on this assertion, we show that $K$ can be approximated by $K_N$ on $L^1(\bR^n)$.

\begin{proof}[Proof of Theorem \ref{thm:ker-gen}]
    Let $K\in L^1(\bR^n)$ be a radial function.
    Write it as $K(x)=J(|x|)$.
    We define
    \beaa
    \cH[K](s) := e^{n s} J(e^{s}) \quad (s\in\bR).
    \eeaa
    Then, we have
    \beaa
    \| K \|_{L^1(\bR^n)} &=& |\bS^{n-1}| \int^{+\infty}_{0} r^{n-1} |J(r)|dr \\
    &=& |\bS^{n-1}| \int_{\bR} |\cH[J](s)|ds = |\bS^{n-1}| \| \cH[K] \|_{L^1(\bR)},
    \eeaa
    where $|\bS^{n-1}|$ is the area of the $(n-1)$-dimensional unit sphere.
    Set 
    \beaa
    k(x;d) := \left(\dfrac{1}{d}\right)^{n/2} G\left( \dfrac{|x|}{\sqrt{d}} \right)
    \eeaa
    as the function obtained by replacing $d_j$ in \eqref{kj} with $d>0$.
    Here, putting $ \theta := (\log d )/2$, we obtain
    \be\label{trans}
    \cH[k(\cdot;d)](s) = e^{n(s-\theta)} G(e^{s-\theta}) = \cH[k(\cdot;1)](s-\theta).
    \ee

    Let us show that the Fourier transform of $\cH[k(\cdot;1)]$ does not attain $0$.
    Considering the variable transformation $\eta = e^{s}$, for any $\xi \in \bR$, we deduce
    \beaa
    \int_{\bR} e^{i\xi s} \cH[k(\cdot;1)](s)ds 
    &=& \int_{\bR} e^{i\xi s} e^{ns} G(e^{s}) ds \\
    &=& \dfrac{1}{(2\pi)^{n/2}} \int_{\bR} e^{i\xi s} e^{(n/2+1)s} M_{n/2-1}(e^{s}) ds \\
    &=& \dfrac{1}{(2\pi)^{n/2}} \int^{+\infty}_{0} \eta^{i\xi+n/2} M_{n/2-1}(\eta) d\eta.
    \eeaa
    It follows from \eqref{integ-form} that
    \beaa
    \int_{\bR} e^{i\xi s} \cH[k(\cdot;1)](s)ds 
    = \dfrac{1}{\pi^{n/2}} 2^{-1 +i\xi} \Gamma\left(\dfrac{2+i\xi}{2}\right) \Gamma\left(\dfrac{n+i\xi}{2}\right) \neq 0.
    \eeaa

    From Theorem \ref{thm:wiener}, for any $\eps>0$, there exist $N\in\bN$, $\{\alpha_j\}_{1\le j\le N},\ \{\theta_j\}_{1\le j\le N} \subset\bR$ such that
    \beaa
    \left\| \cH[K] - \dsum^{N}_{j=1} \alpha_j \cH[k(\cdot;1)](\cdot- \theta_j) \right\|_{L^1(\bR)} < \dfrac{\eps}{|\bS^{n-1}|}.
    \eeaa
    Therefore, by setting $d_j = e^{2\theta_j}\ (1\le j\le N)$, we obtain
    \beaa
    \left\|K - K_N \right\|_{L^1(\bR^n)} 
    &=& |\bS^{n-1}| \left\| \cH[K] - \dsum^{N}_{j=1} \alpha_j \cH[k(\cdot;d_j)] \right\|_{L^1(\bR)} \\
    &=& |\bS^{n-1}| \left\| \cH[K] - \dsum^{N}_{j=1} \alpha_j \cH[k(\cdot;1)](\cdot- \theta_j) \right\|_{L^1(\bR)} < \eps.
    \eeaa
    This completes the proof.
\end{proof}

}

\subsection{Error estimate for integral kernels}
We explain the proof of Theorem \ref{thm:est_poly} in this subsection.
{\red Throughout of this subsection, we set $d_j=j^{-2}$ for $j\in\bN$.}
For the result of {\red $L^1(\bR^n)$} convergence stated in Theorem \ref{thm:est_poly}, we can obtain the convergence result for derivatives.
Although this result is not used for the reaction-diffusion approximation of nonlocal interactions, we provide the following result on derivative approximation for Theorem \ref{thm:est_poly}.
\begin{theorem}\label{thm:w11}
Let $n\in\{1,2,3\}$ and $d_j = j^{-2}$ for $j\in\bN$. 
We assume the following conditions in each dimension:
\begin{itemize}
    \item When $n=1$, we assume that $J\in C^1([0,+\infty))$ and that {\red  $\dlim_{r\to+\infty} e^{2r}(J(r) + J'(r)) $ 
    exists.}
    \item When $n=2$, we assume that $J\in C^2([0,+\infty))$ 
    and that $\dlim_{r \to + 0} r^{-1} ( J'(r) + r J''(r) ) $ exists, and that there exists $\alpha>1/2$ such that
    \begin{equation*}
            {\red \dlim_{r\to+\infty} r^{\alpha} e^{2r} (J'(r) + r J''(r))}
    \end{equation*}
    exist.
    \item When $n=3$, we assume that $J\in C^1((0,+\infty))$ and that both  
    $\dlim_{r\to+0} ( J(r) + r J'(r))$ and {\red $\dlim_{r \to +\infty} e^{2r}( J(r) + rJ'(r) + rJ(r) )$} exist.
\end{itemize}
Let the same notation of $h$ in Assumption \ref{ass:kernel} be enforced.
Then, for any $N\in\bN$ and constants $\{\alpha_j\}_{1\le j \le N+1}$, 
\begin{align*}
\left\|K - K_{N+1}  \right\|_{W^{1,1}}
&\le \dfrac{2(n+1)\pi^{n/2}}{\Gamma(n/2)} \max_{\lambda\in[0,1]} | h(\lambda) - P_{N}(\lambda) |\\
&\quad\quad + \dfrac{2n\pi^{n/2}}{2^n\Gamma(n/2)} \max_{\lambda\in[0,1]} | h'(\lambda) - P'_{N}(\lambda) |    
\end{align*}
holds, where $P_{N}$ is the polynomial defined in Theorem \ref{thm:est_poly}. 
\end{theorem}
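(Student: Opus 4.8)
The plan is to reduce the $W^{1,1}$ estimate to two radial integrals and to treat the new gradient term by differentiating, in each dimension, the pointwise correspondence between the radial profile of the kernel and the function $h$ that already underlies Theorem \ref{thm:est_poly}. Since $K$ and $K_{N+1}$ are radial, write $K(x)=J(|x|)$ and $K_{N+1}(x)=J_{N+1}(|x|)$; then $\nabla(J(|x|))=J'(|x|)\,x/|x|$, so $|\nabla(K-K_{N+1})(x)|=|J'(|x|)-J_{N+1}'(|x|)|$ and
\[
\|K-K_{N+1}\|_{W^{1,1}}=\frac{2\pi^{n/2}}{\Gamma(n/2)}\int_0^{+\infty}r^{n-1}\Big(|J-J_{N+1}|+|J'-J_{N+1}'|\Big)\,dr .
\]
The first summand is exactly what Theorem \ref{thm:est_poly} controls, contributing the term $\frac{2\pi^{n/2}}{\Gamma(n/2)}\max_{\lambda\in[0,1]}|h-P_N|$. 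Hence everything reduces to bounding $\int_0^{+\infty}r^{n-1}|J'-J_{N+1}'|\,dr$ in terms of $\max_{\lambda\in[0,1]}|h-P_N|$ and $\max_{\lambda\in[0,1]}|h'-P_N'|$.

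Next I would set $\lambda=e^{-r}$ and recall the profile identities behind Theorem \ref{thm:est_poly}. For $n=1$ and $n=3$ the Green functions $k_j$ are, up to the factors $j/2$ and $j^2/(4\pi r)$, the pure exponentials $e^{-jr}=\lambda^{\,j}$, so the definition of $h$ in Assumption \ref{ass:kernel} yields the pointwise identities $J(r)=\lambda h(\lambda)$ (for $n=1$) and $rJ(r)=\lambda h(\lambda)$ (for $n=3$), together with the same identities for $J_{N+1}$ and $P_N$. Differentiating in $r$ and using $d\lambda/dr=-\lambda$ turns these into
\[
J'-J_{N+1}'=-\lambda(h-P_N)-\lambda^2(h'-P_N')\quad(n=1),
\]
and, after multiplying by $r^{2}$, the analogous identity $r^2(J'-J_{N+1}')=-\lambda(1+r)(h-P_N)-r\lambda^2(h'-P_N')$ for $n=3$. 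Substituting and changing variables to $\lambda$ reduces the bound to the elementary integrals $\int_0^\infty e^{-r}\,dr$, $\int_0^\infty r e^{-r}\,dr$ and $\int_0^\infty r e^{-2r}\,dr$, giving a constant times $\max|h-P_N|$ plus a constant times $\max|h'-P_N'|$; a direct accounting shows these constants are at most $\tfrac{2(n+1)\pi^{n/2}}{\Gamma(n/2)}$ and $\tfrac{2n\pi^{n/2}}{2^n\Gamma(n/2)}$ respectively (with equality at $n=1$). The only other input is that $h'\in C([0,1])$, guaranteed by the strengthened decay hypotheses on $J'$ in the theorem.

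The genuinely new work — and the main obstacle — is $n=2$, where $k_j$ has profile $\tfrac{j^2}{2\pi}M_0(jr)$ rather than an exponential, so there is no pointwise identity and one must pass through the transform $A$ of Assumption \ref{ass:kernel}. Here I would first record that $A$ sends $k_j$ to $\lambda^{\,j}$: rewriting $A$ in the Abel form $A(r)/r=-\tfrac2\pi\int_r^\infty J'(u)(u^2-r^2)^{-1/2}\,du$ and using $\int_0^\infty M_1(a\cosh s)\,ds=\tfrac{\pi}{2a}e^{-a}$ (the product formula for the modified Bessel functions) gives $A[M_0(j\,\cdot\,)](r)=e^{-jr}$, hence $A[K_{N+1}]=\lambda P_N$, while $A[K]=\lambda h$ by the very definition of $h$. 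Inverting the Abel transform yields the representation already used for the $L^1$ bound,
\[
J(r)-J_{N+1}(r)=\int_0^{+\infty} e^{-r\cosh s}\,\big(h-P_N\big)\!\left(e^{-r\cosh s}\right)\,ds ,
\]
and differentiating under the integral sign (using $\tfrac{d}{dr}e^{-r\cosh s}=-\cosh s\,e^{-r\cosh s}$) expresses $J'-J_{N+1}'$ as the $s$-integral of $-\cosh s\,[\lambda(h-P_N)+\lambda^2(h'-P_N')]$ with $\lambda=e^{-r\cosh s}$. Bounding and integrating against $r\,dr$ reduces everything to the Bessel moments $\int_0^\infty rM_1(r)\,dr=\tfrac\pi2$ and $\int_0^\infty rM_1(2r)\,dr=\tfrac\pi8$, both instances of the formula $\int_0^\infty r^{\mu-1}M_\nu(r)\,dr=2^{\mu-2}\Gamma(\tfrac{\mu-\nu}{2})\Gamma(\tfrac{\mu+\nu}{2})$ recorded in Subsection \ref{subsec:Green}. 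The delicate steps are exactly the justification of the Abel inversion and of the differentiation under the integral near $s=0$ and as $s\to\infty$; this is what the reinforced $n=2$ hypotheses ($J\in C^2$, the limit $\lim_{r\to+0}r^{-1}(J'(r)+rJ''(r))$, and the $r^{\alpha}e^{2r}$ decay of $J',J''$) are designed to supply, since they guarantee $h'\in C([0,1])$ and the absolute convergence of the resulting integrals. Adding the gradient bound to the $L^1$ bound from Theorem \ref{thm:est_poly} and collecting constants then gives the claimed inequality.
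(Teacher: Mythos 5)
Your proposal is correct and follows essentially the same route as the paper: split the $W^{1,1}$ norm into the $L^1$ part (already handled by Theorem \ref{thm:est_poly}) and a gradient part, reduce the latter to $\max|h-P_N|$ and $\max|h'-P_N'|$ via the substitution $\lambda=e^{-r}$ using the pointwise identities $J=\lambda h$, $rJ=\lambda h$ for $n=1,3$ and the Abel-transform representation (the paper's Lemmas \ref{lemm:A} and \ref{lemm:Ad}) for $n=2$, and evaluate the same elementary and Bessel-moment integrals. Your constants come out no larger than the stated ones (the small discrepancy is only because you measure the gradient by $\|\,|\nabla F|\,\|_{L^1}$ whereas the paper uses $\sum_j\|\partial_{x_j}F\|_{L^1}$, which is what the stated constants correspond to), so the claimed inequality follows a fortiori.
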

We provide the proofs of Theorem \ref{thm:est_poly} and Theorem \ref{thm:w11} in the successive sub-subsections, respectively.

\subsubsection{One-dimensional case}
We first give the proof of Theorem \ref{thm:est_poly} when $n=1$.
Note that in this case $k_j$ is represented as
\beaa
k_j (x) = \dfrac{j}{2} e^{-j|x|}.
\eeaa

\begin{proof}[Proof of Theorem \ref{thm:est_poly}]
Let Assumption \ref{ass:kernel} for $n=1$ be enforced.
Then, we obtain
\beaa
\left\|K - K_{N+1}  \right\|_{L^1} &=& 2 \int^{+\infty}_{0} \left| J(r) - \dsum^{N+1}_{j=1} \dfrac{j \alpha_j}{2} e^{-jr} \right| dr \\
&\le& 2 \left( \int^{+\infty}_{0} e^{-r} dr\right) \sup_{r\ge 0} \left|e^{r} J(r) - \dsum^{N}_{j=0} \alpha_{j+1} c_{j,n} e^{-jr} \right| \\
&=& 2\max_{\lambda\in[0,1]} \left|h(\lambda) - P_{N}(\lambda) \right|
\eeaa
from the definition of $\{\alpha_j\}_{1\le j\le N+1}$.
\end{proof}

From this proof, we obtain a point-wise absolute error as follows. 
\begin{corollary}
Let $n=1$ and $d_j = j^{-2}$ for $j\in\bN$. 
Let Assumption \ref{ass:kernel} be enforced.
Then, we have
\beaa
\left|K(x) - K_{N+1}(x) \right| \le  e^{-|x|} \max_{\lambda\in[0,1]} \left|h(\lambda) - P_{N}(\lambda) \right|
\eeaa
for any $x\in\bR$.
\end{corollary}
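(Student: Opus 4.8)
The plan is to read off the pointwise estimate from the algebraic identity that already underlies the proof of Theorem~\ref{thm:est_poly} in the case $n=1$, simply retaining it before the integration over $r=|x|$ is carried out. In that proof the $L^1$ bound arises by factoring $e^{-r}$ out of the integrand and recognizing the remaining bracket as $h(\lambda)-P_N(\lambda)$ under the change of variable $\lambda=e^{-r}$; dropping the integral while keeping the factorization yields exactly a pointwise bound.

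First I would use the explicit form $k_j(x)=\tfrac{j}{2}e^{-j|x|}$, valid for $n=1$ with $d_j=j^{-2}$, to write
\beaa
K(x)-K_{N+1}(x) = J(|x|) - \dsum^{N+1}_{j=1}\dfrac{j\alpha_j}{2}e^{-j|x|}.
\eeaa
Writing $r=|x|$ and pulling out a factor $e^{-r}$, this equals
\beaa
e^{-r}\left[\, e^{r}J(r) - \dsum^{N+1}_{j=1}\dfrac{j\alpha_j}{2}e^{-(j-1)r}\,\right].
\eeaa
I would then substitute $\lambda=e^{-r}\in(0,1]$, so that $r=-\log\lambda$ and $e^{r}J(r)=J(-\log\lambda)/\lambda=h(\lambda)$ by the definition of $h$ in Assumption~\ref{ass:kernel}; reindexing the sum by $i=j-1$ and using $c_{i,1}=(i+1)/2$ turns it into $\sum_{i=0}^{N}\alpha_{i+1}c_{i,1}\lambda^{i}=P_N(\lambda)$. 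This gives the identity
\beaa
K(x)-K_{N+1}(x) = e^{-|x|}\bigl[\,h(e^{-|x|})-P_N(e^{-|x|})\,\bigr],
\eeaa
and taking absolute values, followed by dominating $|h(\lambda)-P_N(\lambda)|$ by its maximum over $[0,1]$, delivers the claimed inequality.

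The computation is entirely elementary, so I do not expect a genuine obstacle; the only point needing a word of care is the endpoint $\lambda=0$, that is, $|x|\to+\infty$. For every finite $x$ the value $\lambda=e^{-|x|}$ lies in $(0,1]$, where the identity holds verbatim, and the maximum over the closed interval $[0,1]$ is legitimate because Assumption~\ref{ass:kernel} guarantees $h\in C([0,1])$ through the existence of $h(0)=\dlim_{r\to+\infty}e^{r}J(r)$. Thus $h-P_N$ is continuous on the compact set $[0,1]$ and attains its maximum there, and since $e^{-|x|}\in(0,1]\subset[0,1]$ the bound follows for all $x\in\bR$.
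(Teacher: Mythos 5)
Your argument is exactly the one the paper intends: the corollary is stated immediately after the $n=1$ proof of Theorem \ref{thm:est_poly} with the remark that it follows ``from this proof,'' namely by factoring $e^{-r}$ out of the integrand, substituting $\lambda=e^{-|x|}$ to recognize $h(\lambda)-P_N(\lambda)$, and bounding by the maximum over $[0,1]$ before integrating. Your additional care about the endpoint $\lambda=0$ and the continuity of $h$ guaranteed by Assumption \ref{ass:kernel} is correct and consistent with the paper.
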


\begin{proof}[Proof of Theorem \ref{thm:w11}]
We note that \[
h'(\lambda) = - \frac{ J'(-\log \lambda) + J(-\log  \lambda) }{\lambda^2} = -e^{2r}( J'(r) + J(r) ), \ (-\log \lambda = r)
\]
and we define $h'(0) := \dlim_{r \to +\infty}  -e^{2r}( J(r) + J'(r) ) $.
Then, we compute that 
\begin{align*}
&\left\| \frac{ \partial }{ \partial x } (K - K_{N+1} )  \right\|_{L^1} 
=2 \int^{+\infty}_{0} \left| J'(r)  +  \sum_{j=1}^{N+1}  \frac{ j^2 \alpha_j }{2}  e^{-jr} \right| dr\\
&\le 2 \int^{+\infty}_{0} e^{-2 r} \left|  e^{2r} (J'(r) +J(r)) + \sum_{j=0}^N j \alpha_{j+1} c_{j,n} e^{-(j-1)r } \right| dr + \left\|K - K_{N+1}  \right\|_{L^1} \\
&\le 2 \Big( \int^{+\infty}_{0} e^{-2 r} dr \Big) \max_{\lambda \in [0,1] }  \left|  h'(\lambda) - P'_N (\lambda)  \right| + \left\|K - K_{N+1}  \right\|_{L^1} \\
&= \max_{\lambda \in [0,1] }  \left|  h'(\lambda) - P'_N (\lambda)  \right| + 2 \max_{\lambda \in [0,1] }  \left|  h(\lambda) - P_N (\lambda)  \right|.    
\end{align*}
Thus, the proof in one-dimensional case is complete.
\end{proof}

\subsubsection{Two-dimensional case}

We show the case that $n=2$ in Theorem \ref{thm:est_poly}.
Remark that $k_j$ is expressed as
\beaa
k_j(x) = \dfrac{j^2}{2\pi} M_{0}(j|x|) = \dfrac{j^2}{2\pi}  \int^{+\infty}_{0} e^{-j|x| \cosh s} ds.
\eeaa
Let Assumption $\ref{ass:kernel}$ for $n=2$ be enforced.
We prepare a lemma for $A(r)$.
The integral transformation $A(r)$ of $J(r)$ is defined as inspired by the Abel transformation (\cite{Beerends}).
Based on the theory, it follows 

\begin{lemma}\label{lemm:A}
    $A$ is well-defined on $[0,+\infty)$ and satisfies $A(0)=0$, $A\in C([0,+\infty))$ and 
     \beaa
      \dlim_{r\to+\infty} e^{r}A(r)=0.
     \eeaa
    Moreover, $J$ is represented by
    \beaa
    J(r) = \int^{+\infty}_{0} A(r \cosh s)ds. 
    \eeaa
\end{lemma}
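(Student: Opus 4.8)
The plan is to first recast $A$ in a form that exposes its Abel-transform structure (in the sense of \cite{Beerends}) and then verify the four listed properties together with the inversion formula. Substituting $u=r\cosh s$, so that $ds = du/\sqrt{u^2-r^2}$, turns the definition into
\[
A(r) = -\frac{2r}{\pi}\int_r^{+\infty}\frac{J'(u)}{\sqrt{u^2-r^2}}\,du,\qquad (r>0).
\]
Well-definedness for $r>0$ is then immediate: the singularity at $u=r$ is of order $(u-r)^{-1/2}$ and hence integrable, while the decay hypothesis that $r^\alpha e^{r}J'(r)$ tends to a finite limit forces $J'(u)=O(u^{-\alpha}e^{-u})$ and makes the tail absolutely convergent.

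For the value at the origin I would estimate this integral directly. Splitting $\int_r^{+\infty}=\int_r^1+\int_1^{+\infty}$ for $r<1$, the second piece is bounded uniformly in $r$ by the decay of $J'$, and the first is controlled by $\sup_{[0,1]}|J'|\cdot\int_r^1 (u^2-r^2)^{-1/2}\,du = \sup_{[0,1]}|J'|\cdot\mathrm{arccosh}(1/r)=O(\log(1/r))$. Multiplying by the prefactor $r$ gives $|A(r)|=O(r\log(1/r))$, which yields both $A(0)=0$ and continuity of $A$ at the origin. Continuity on $(0,+\infty)$ then follows from dominated convergence applied to the representation $A(r)=-\tfrac{2r}{\pi}\int_0^{+\infty}J'(r\cosh s)\,ds$, using a dominating function of the form $\min\{\sup|J'|,\,Ce^{-a\cosh s}\}$ valid for $r$ in any compact subinterval $[a,b]\subset(0,+\infty)$.

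The decay statement $\dlim_{r\to+\infty}e^{r}A(r)=0$ is the delicate step and the only place where the exponent $\alpha>1/2$ is genuinely used. Setting $g(u):=u^\alpha e^{u}J'(u)$, which is continuous on $[0,+\infty)$ and convergent at infinity, hence bounded, I would write
\[
e^{r}A(r) = -\frac{2}{\pi}\,r^{1-\alpha}\int_0^{+\infty}(\cosh s)^{-\alpha}e^{-r(\cosh s-1)}\,g(r\cosh s)\,ds,
\]
and bound the integral using $(\cosh s)^{-\alpha}\le 1$ together with the elementary inequality $\cosh s-1\ge s^2/2$, which gives $\int_0^{+\infty}e^{-r(\cosh s-1)}\,ds\le\int_0^{+\infty}e^{-rs^2/2}\,ds=\sqrt{\pi/(2r)}$. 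Hence $|e^{r}A(r)|\le Cr^{1/2-\alpha}\to 0$, and it is precisely the Gaussian concentration near $s=0$ that produces the extra factor $r^{-1/2}$ needed to beat the prefactor $r^{1-\alpha}$. This is the main obstacle: the estimate must be made rigorous and uniform in the remaining variable rather than treated as a formal Laplace asymptotic.

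Finally, for the inversion I would expand $A(r\cosh s) = -\tfrac{2}{\pi}(r\cosh s)\int_0^{+\infty}J'(r\cosh s\cosh s')\,ds'$, so that $\int_0^{+\infty}A(r\cosh s)\,ds$ becomes a double integral over $(s,s')$. Passing to the variables $w=\cosh s$ and $v=\cosh s\cosh s'$ and exchanging the order of integration (justified by the absolute convergence coming from the decay of $J'$) yields
\[
\int_0^{+\infty}A(r\cosh s)\,ds = -\frac{2}{\pi}\int_1^{+\infty} rJ'(rv)\left(\int_1^{v}\frac{w\,dw}{\sqrt{w^2-1}\,\sqrt{v^2-w^2}}\right)dv.
\]
The inner integral equals $\pi/2$ by the classical identity $\int_1^{v^2}\frac{d\xi}{\sqrt{(\xi-1)(v^2-\xi)}}=\pi$ (obtained via $\xi=w^2$), so the expression collapses to $-\int_r^{+\infty}J'(u)\,du=J(r)-J(+\infty)$ after the substitution $u=rv$. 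Since $K\in L^1(\bR^2)$ together with the existence of $\dlim_{r\to+\infty}J(r)$ forces $J(+\infty)=0$, the representation $J(r)=\int_0^{+\infty}A(r\cosh s)\,ds$ follows, completing the proof.
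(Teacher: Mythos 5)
Your proof is correct and follows essentially the same route as the paper: the Abel-transform representation $A(r)=-\tfrac{2r}{\pi}\int_r^{\infty}J'(u)(u^2-r^2)^{-1/2}du$, dominated convergence for continuity, a Laplace-type bound for the decay (the paper packages your estimate $\int_0^\infty e^{-r(\cosh s-1)}ds\le\sqrt{\pi/(2r)}$ as the known asymptotic $M_0(r)\simeq\sqrt{\pi/(2r)}\,e^{-r}$, but the content is identical), and the same Fubini exchange with the inner integral $\int_r^{\eta}s\,ds/(\sqrt{s^2-r^2}\sqrt{\eta^2-s^2})=\pi/2$ for the inversion. Your explicit remark that $K\in L^1(\bR^2)$ forces $J(+\infty)=0$ is a small point the paper leaves implicit, but otherwise the two arguments coincide.
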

\begin{proof}
    From Assumption \ref{ass:kernel},
    there exists a constant $C_J>0$ such that
    \beaa
    |J'(r)| \le C_J \min\{ 1, r^{-\alpha} \} e^{-r} \quad (r> 0).
    \eeaa
    The continuity of $A$ is obtained by the Lebesgue dominated theorem.
    For $r>0$, we obtain that
    \beaa
    |A(r)| 
    &\le& \dfrac{2r}{\pi} \int^{+\infty}_{0} |J'(r \cosh s)| ds \\
    &\le& \dfrac{2C_J r}{\pi} \int^{+\infty}_{0} \min\{1, (r\cosh s)^{-\alpha} \} e^{-r\cosh s} ds \\
    &\le&  \dfrac{2C_J } {\pi} \min\{r, r^{1-\alpha} \} M_0(r).
    \eeaa
    This implies that
     \beaa
      \dlim_{r\to+\infty} e^{r}A(r) =0
     \eeaa
    from \eqref{BK:asy}.
    Taking a limit as $r\to+0$ yields $A(0)=0$. 
    Finally, we have
\begin{align*}
    \int^{+\infty}_{0} A(r \cosh s)ds 
    &= \int^{+\infty}_{r} \dfrac{A(s)}{\sqrt{s^2-r^2}}  ds  \notag\\
    &= -\dfrac{2}{\pi} \int^{+\infty}_{r} \int^{+\infty}_{s} \dfrac{sJ'(\eta)}{\sqrt{s^2-r^2}\sqrt{\eta^2-s^2}} d\eta  ds \notag\\
    &= -\dfrac{2}{\pi} \int^{+\infty}_{r} \int^{\eta}_{r} \dfrac{s}{\sqrt{s^2-r^2}\sqrt{\eta^2-s^2}} ds J'(\eta) d\eta \notag\\
    &= -\int^{+\infty}_{r} J'(\eta) d\eta =J(r).   
\end{align*}
    Thus, we get the desired assertion.
\end{proof}

\begin{proof}[Proof of Theorem \ref{thm:est_poly}]
From the definitions of $h$ and $\{\alpha_j\}_{1\le j\le N+1}$,
we have
\begin{align*}
\left\|K - K_{N+1}  \right\|_{L^1} 
&= 2\pi \int^{+\infty}_{0} r \left| J(r) - \dsum^{N+1}_{j=1} \dfrac{j^2 \alpha_j}{2\pi} M_0(jr) \right| dr \\
&\le 2\pi \int^{+\infty}_{0} \int^{+\infty}_{0} r \left| A(r \cosh s) - \dsum^{N+1}_{j=1} \alpha_{j} c_{j-1,n} e^{-jr\cosh s} \right|ds dr \\
&\le 2\pi  \left( \int^{+\infty}_{0} \int^{+\infty}_{0} r e^{-r\cosh s} ds dr \right) \sup_{r\ge 0} \left| e^{r}A(r) - \dsum^{N}_{j=0} \alpha_{j+1} c_{j,n} e^{-jr} \right| \\
&= 2\pi \max_{\lambda\in[0,1]} \left|h(\lambda) - P_{N}(\lambda) \right|.
\end{align*}
\end{proof}

Similarly, we get a point-wise absolute error. 
\begin{corollary}
Let $n=2$ and $d_j = j^{-2}$ for $j\in\bN$. 
Let Assumption \ref{ass:kernel} be enforced.
Then, we have
\beaa
\left|K(x) - K_{N+1}(x) \right| \le  M_0(|x|) \max_{\lambda\in[0,1]} \left|h(\lambda) - P_{N}(\lambda) \right|
\eeaa
for all $x\in\bR^2\backslash\{0\}$.
\end{corollary}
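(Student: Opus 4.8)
The plan is to follow the proof of Theorem~\ref{thm:est_poly} for $n=2$ verbatim, but to stop one step earlier: instead of integrating the pointwise discrepancy against $r\,dr$ over $(0,\infty)$, I would keep the bound at the level of a fixed $x$. The two ingredients are the representation $J(r)=\int_0^{\infty}A(r\cosh s)\,ds$ supplied by Lemma~\ref{lemm:A}, and the integral formula $M_0(j|x|)=\int_0^{\infty}e^{-j|x|\cosh s}\,ds$ that defines the modified Bessel function. Using these, I would first rewrite the pointwise difference
\[
K(x)-K_{N+1}(x)=J(|x|)-\sum_{j=1}^{N+1}\frac{\alpha_j j^2}{2\pi}M_0(j|x|)
\]
as a single integral over $s\in[0,\infty)$ whose integrand is $A(|x|\cosh s)-\sum_{j=1}^{N+1}\frac{\alpha_j j^2}{2\pi}e^{-j|x|\cosh s}$. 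Merging the two convergent $s$-integrals into one integral of their difference is justified by the decay estimate $|A(r)|\le\frac{2C_J}{\pi}\min\{r,r^{1-\alpha}\}M_0(r)$ obtained inside the proof of Lemma~\ref{lemm:A}.

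The crux is to identify this integrand with $h-P_N$. Setting $r=|x|\cosh s$ and $\lambda=e^{-r}\in(0,1]$, the definition $h(\lambda)=A(-\log\lambda)/\lambda$ gives $A(r)=e^{-r}h(e^{-r})$, while the choice $c_{j,2}=\frac{(j+1)^2}{2\pi}$ makes $e^{-r}P_N(e^{-r})=\sum_{j=1}^{N+1}\frac{\alpha_j j^2}{2\pi}e^{-jr}$ after reindexing $j\mapsto j+1$. Consequently
\[
A(r)-\sum_{j=1}^{N+1}\frac{\alpha_j j^2}{2\pi}e^{-jr}
=e^{-r}\bigl[h(\lambda)-P_N(\lambda)\bigr].
\]
Since $e^{-r}\in(0,1]$ for $|x|>0$ and $s\ge0$, I would bound the integrand in absolute value by $e^{-|x|\cosh s}\max_{\lambda\in[0,1]}|h(\lambda)-P_N(\lambda)|$.

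Finally, integrating this pointwise bound over $s$ and invoking $\int_0^{\infty}e^{-|x|\cosh s}\,ds=M_0(|x|)$ yields exactly the claimed estimate $|K(x)-K_{N+1}(x)|\le M_0(|x|)\max_{\lambda\in[0,1]}|h(\lambda)-P_N(\lambda)|$ for $x\in\bR^2\setminus\{0\}$. I expect no genuine obstacle here: the entire argument rests on the representation and decay of $A$ already established in Lemma~\ref{lemm:A} together with the elementary inequality $e^{-r}\le1$. The only point deserving mild care is that the supremum of $|h-P_N|$ is taken over the closed interval $[0,1]$, which is legitimate because $h\in C([0,1])$ under Assumption~\ref{ass:kernel} (with $h(0)=0$), so the maximum is attained and the substitution $\lambda=e^{-|x|\cosh s}$ never escapes the range on which $h-P_N$ is controlled.
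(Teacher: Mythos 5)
Your proposal is correct and is essentially the paper's own argument: the paper obtains this corollary by stopping the proof of Theorem \ref{thm:est_poly} (case $n=2$) before the final integration in $r$, which is exactly your plan of combining $J(r)=\int_0^\infty A(r\cosh s)\,ds$ with $M_0(j|x|)=\int_0^\infty e^{-j|x|\cosh s}\,ds$, factoring out $e^{-|x|\cosh s}$, and bounding the bracket by $\max_{\lambda\in[0,1]}|h(\lambda)-P_N(\lambda)|$. No discrepancies.
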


\begin{proof}[Proof of Theorem \ref{thm:w11}]
First we show the following properties.
\begin{lemma}\label{lemm:Ad}
    $A'$ is well-defined on $[0,+\infty)$, $A\in C^1([0,+\infty))$, $A'(0)=0$ and $\dlim_{r\to+\infty} e^{2r}A'(r)=0$.
    Moreover, $J'$ is represented by
    \beaa
    J'(r) = \int^{+\infty}_{0} A'(r \cosh s) \cosh s ds. 
    \eeaa
\end{lemma}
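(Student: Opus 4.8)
The plan is to obtain $A'$ by differentiating the defining integral $A(r) = -\tfrac{2r}{\pi}\int_0^\infty J'(r\cosh s)\,ds$ under the integral sign. Applying the product rule and collecting the two resulting terms yields the representation
\[
A'(r) = -\frac{2}{\pi}\int_0^\infty \big(J'(r\cosh s) + r\cosh s\, J''(r\cosh s)\big)\,ds = -\frac{2}{\pi}\int_0^\infty g(r\cosh s)\,ds,
\]
where $g(\rho) := J'(\rho) + \rho J''(\rho) = (\rho J'(\rho))'$. This is the exact analogue of the representation of $A$ in Lemma \ref{lemm:A}, now with $J'$ replaced by $g$, so the whole argument of that lemma can be replayed with $g$ in place of $J'$. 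The first task is therefore to translate the hypotheses of Theorem \ref{thm:w11} into pointwise bounds on $g$: the condition $\lim_{r\to+0}(J'(r)+rJ''(r))=0$ together with the existence of $\lim_{r\to+0}r^{-1}(J'(r)+rJ''(r))$ gives $g(0)=0$ and $|g(\rho)|\le C\rho$ near $0$, while the existence of $\lim_{r\to+\infty}r^\alpha e^{2r}J'(r)$ and $\lim_{r\to+\infty}r^\alpha e^{2r}J''(r)$ gives $|g(\rho)|\le C\rho^{1-\alpha}e^{-2\rho}$ for large $\rho$, the $\rho J''$ term dominating. Combining these, I would record a single bound $|g(\rho)|\le C_g\min\{\rho,\rho^{1-\alpha}\}e^{-2\rho}$ valid for all $\rho>0$.

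With this bound in hand, well-definedness of the integral for each $r>0$ and continuity of $A'$ on $(0,+\infty)$ follow from the Lebesgue dominated convergence theorem exactly as in Lemma \ref{lemm:A}, the differentiation under the integral sign being justified on compact subsets of $(0,+\infty)$ by the decay of $J''$. The decay statement $\lim_{r\to+\infty}e^{2r}A'(r)=0$ is the quantitative heart of the lemma: for large $r$ one has $r\cosh s\ge 1$, so the bound gives
\[
|A'(r)|\le \frac{2C_g}{\pi}\int_0^\infty (r\cosh s)^{1-\alpha}e^{-2r\cosh s}\,ds \le \frac{2C_g}{\pi}\,r^{1-\alpha}\int_0^\infty \cosh s\, e^{-2r\cosh s}\,ds = \frac{2C_g}{\pi}\,r^{1-\alpha}M_1(2r),
\]
where I used $(\cosh s)^{1-\alpha}\le \cosh s$. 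Invoking the asymptotics $M_1(2r)\simeq\sqrt{\pi/4r}\,e^{-2r}$ from \eqref{BK:asy} yields $e^{2r}|A'(r)|\lesssim r^{1/2-\alpha}$, which tends to $0$ precisely because $\alpha>1/2$; this is exactly where that hypothesis is consumed.

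It remains to analyze the endpoint $r=0$ and to record the inversion formula. For the latter I would simply differentiate the identity $J(r)=\int_0^\infty A(r\cosh s)\,ds$ of Lemma \ref{lemm:A} under the integral sign, the differentiation being legitimate by the decay of $A'$ just established, to obtain $J'(r)=\int_0^\infty A'(r\cosh s)\cosh s\,ds$. The genuinely delicate step, and the one I expect to be the main obstacle, is the behaviour as $r\to+0$: one cannot pass the limit through the $s$-integral, since the pointwise limit of the integrand is $g(0)=0$ while the mass of the integral escapes to $s=+\infty$. The clean way to treat this is the Abel-type substitution $\rho=r\cosh s$, under which
\[
A'(r) = -\frac{2}{\pi}\int_r^{\infty}\frac{g(\rho)}{\sqrt{\rho^2-r^2}}\,d\rho,
\]
so that the boundary behaviour of $A'$ is governed by $\int_0^\infty g(\rho)/\rho\,d\rho$, which converges because $g(\rho)/\rho$ is bounded near $0$ and decays like $\rho^{-\alpha}e^{-2\rho}$ at infinity.

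Establishing that this one-sided limit exists, that it matches the asserted boundary value at $r=0$, and hence that $A'$ is continuous up to the boundary, is the point that requires the most care, and I would isolate it as the crux of the proof. Once the boundary value is identified, combining $A(0)=0$ from Lemma \ref{lemm:A} with the finite limit of $A'(r)$ as $r\to+0$ upgrades differentiability to $A\in C^1([0,+\infty))$ by the standard one-sided derivative criterion, completing the argument.
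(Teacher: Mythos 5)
Your route is essentially the paper's: differentiate under the integral sign to write $A'(r)=-\tfrac{2}{\pi}\int_0^\infty g(r\cosh s)\,ds$ with $g(\rho)=J'(\rho)+\rho J''(\rho)$, translate the hypotheses of Theorem \ref{thm:w11} into a pointwise bound on $g$, obtain well-definedness and continuity on $(0,+\infty)$ by dominated convergence, the decay $e^{2r}A'(r)\to 0$ from the asymptotics of $M_1(2r)$, and the inversion formula for $J'$ from the Abel-type Fubini computation (the paper carries out the double-integral identity directly rather than differentiating $J(r)=\int_0^\infty A(r\cosh s)\,ds$, but the two are interchangeable once the decay of $A'$ is known). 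Your bound at infinity, $|g(\rho)|\le C\rho^{1-\alpha}e^{-2\rho}$ coming from the $\rho J''$ term, is in fact the correct one --- the paper records $\min\{r,r^{-\alpha}\}e^{-2r}$, which does not match its own hypothesis on $J''$ --- and your computation $e^{2r}|A'(r)|\lesssim r^{1/2-\alpha}$ is exactly where $\alpha>1/2$ is consumed.

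The genuine issue is the point you flag as the crux and do not resolve: the claim $A'(0)=0$. Your Abel substitution shows $\lim_{r\to+0}A'(r)=-\tfrac{2}{\pi}\int_0^\infty g(\rho)\rho^{-1}\,d\rho$, and this is generally nonzero: for the paper's own example $J(r)=e^{-ar^2}$ one has $A(r)=2\sqrt{a/\pi}\,re^{-ar^2}$, hence $A'(0)=2\sqrt{a/\pi}\neq 0$, even though this $J$ satisfies every hypothesis of Theorem \ref{thm:w11} for $n=2$. The paper's proof of $A'(0)=0$ passes the limit $r_n\to+0$ through $\int_0^\infty\mathcal{J}_n(s)\,ds$ by dominated convergence, but the available bound $C_J r_n\cosh s\,e^{-2r_n\cosh s}\le C_J/(2e)$ is only uniformly bounded, not dominated by a function integrable in $s$ uniformly in $n$; the mass escapes to $s=+\infty$ exactly as you observe, and the interchange fails. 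So your proposal cannot be completed into a proof of the statement as written, because the assertion $A'(0)=0$ is false at this point; what your argument does establish --- and what the downstream $W^{1,1}$ estimate actually needs --- is that $A'$ extends continuously to $[0,+\infty)$ with the finite boundary value $-\tfrac{2}{\pi}\int_0^\infty\bigl(J'(\rho)+\rho J''(\rho)\bigr)\rho^{-1}\,d\rho$.
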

\begin{proof}[Proof of Lemma \ref{lemm:Ad}]
{\red From the assumptions of Theorem \ref{thm:w11}, we see that $\dlim_{r\to+0} J'(r) + r J''(r) = 0$, and that}
there exists a constant $C_J$ such that
\begin{align*}
    &|J'(r) + rJ''(r)| \le C_J  \min\{r, r^{-\alpha} \} e^{-2r}
\end{align*}
for $r \in [0, \infty)$.
Using this boundedness, we obtain that 
\begin{align*}
    |A'(r)| 
    &\le \frac{2 C_J}{\pi}  \int_0^\infty (r \cosh s) e^{ - 2r \cosh s } ds \\
    & = \frac{2 C_J}{\pi} \Big( \int_r^{\sqrt{r^2 + 1}} t e^{-2t} \frac{ dt }{\sqrt{ t^2 - r^2 }}  + \int_{\sqrt{r^2 + 1}}^\infty t e^{-2t} \frac{ dt }{\sqrt{ t^2 - r^2 }}  \Big) \\
    & \le \frac{2 C_J}{\pi} \frac{5}{4}
\end{align*}
for $r \ge 0$.
For any $r_n \in [0,+\infty)$, we set $\mathcal{J}_n(s):=J'(r_n \cosh s) + r_n(\cosh s) J''(r_n \cosh s)$ for $s\in [0,+\infty)$.
Then, for any $ r_n \to r, (n \to +\infty)$, we have $\dlim_{n\to+\infty}\mathcal{J}_n(s)= J'(r \cosh s) + r(\cosh s) J''(r \cosh s)$ for a fixed $s$ from the continuity of $J'$ and $J''$.
Thus, using the dominated convergence theorem, we see that $A'$ is continuous for $r\ge 0$.
Moreover, for any $ r_n \to +0, (n \to \infty)$, we have 
\begin{align*}
\lim_{n\to+\infty} A'(r_n) 
= \lim_{n\to+\infty} - \frac{2}{\pi} \int_0^\infty \mathcal{J}_n(s) ds
=  - \frac{2}{\pi} \int_0^\infty\lim_{n\to+\infty} \mathcal{J}_n(s)  ds=0.    
\end{align*}
Similarly to the proof of Lemma \ref{lemm:A}, we can compute that 
\begin{equation*}
    |A'(r)| \le \frac{2C_J r }{\pi} \min\{ 1, r^{-1-\alpha} \} M_1(2 r).
\end{equation*}
This yields that $\dlim_{r\to +\infty }e^{2r}A'(r) =0$.
Finally, we see that
\begin{align*}
    \int^{+\infty}_{0} A'(r \cosh s) \cosh s ds
    = -\frac{2}{\pi r} \int_r^\infty \int_r^\eta \frac { s ( J'(\eta) + \eta J''(\eta) )  }{\sqrt{ s^2 - r^2 } \sqrt{ \eta^2 - s^2 } } ds d\eta 
    =J'(r).
\end{align*}
\end{proof}

From this lemma, we define $h'(0)=\dlim_{r\to+\infty} - e^{2r} ( A(r) + A'(r) ) $.
Then, we see that 
\begin{align*}
&h'(\lambda) = - \frac{ A'(-\log \lambda) + A(-\log  \lambda) }{\lambda^2} = -e^{2r}( A'(r) + A(r) ), \ (-\log \lambda = r).
\end{align*}

Now, we can estimate that 
\begin{align*}
& \sum_{j=1}^2 \left\| \frac{ \partial }{ \partial x_j } ( K - K_{N+1} ) \right\|_{L^1} 
= 8 \int^{+\infty}_{0}  r \left| J'(r) + \sum_{j=1}^{N+1} \frac{ j^3 \alpha_j }{2\pi} M_1(jr) \right| dr \\
&\le 8 \int^{+\infty}_{0} \int^{+\infty}_{0} r \cosh s e^{-2r \cosh s} \\
&\quad \quad \quad \times\left|  A'(r\cosh s)e^{ 2r \cosh s} +  \sum_{j=1}^{N+1} j \alpha_{j} c_{j-1,n} e^{-(j-2) r \cosh s} \right| ds dr \\
&\le 8 \Big( \int^{+\infty}_{0} \int^{+\infty}_{0} r \cosh s e^{-2r \cosh s}  ds dr \Big) \\
&\quad \quad \quad \times \max_{R\ge 0}   \left| e^{ 2 R }( A'(R)  + A( R ) )  +  \sum_{j=0}^{N} j \alpha_{j+1} c_{j,n} e^{-(j-1) R} \right| \\
& \quad + 8 \Big( \int^{+\infty}_{0} \int^{+\infty}_{0} r \cosh s e^{-r \cosh s}  ds dr \Big)  \max_{R\ge 0}   \left| e^{ R } A( R )  -  \sum_{j=0}^{N} \alpha_{j+1} c_{j,n} e^{-j R} \right| \\
& = \pi \max_{\lambda \in[0,1]} | h'(\lambda) - P'_N(\lambda) | + 4\pi   \max_{\lambda \in[0,1]} | h(\lambda) - P_N(\lambda) |, \quad (\lambda = e^{-R}).
\end{align*}
Thus, we obtain the desired assertion in two-dimensional case.
\end{proof}

Here, we introduce some examples of $A$.

\begin{example}
 Let $K(x)=J(|x|)=(a+b|x|)e^{-c|x|}$ with $a,b\in\bR$ and $c>1$. 
 Then, $K$ satisfies Assumption \ref{ass:kernel}, and $A$ is represented by
 \beaa
    A(r) &=& -\dfrac{2r}{\pi} \int^{+\infty}_{0}  \{ (b-ac) - bc r\cosh s \}e^{-cr \cosh s} ds \\
    &=& \dfrac{2r}{\pi} \{ (ac-b)M_{0}(cr) + bc r M_{1}(cr)\}.
 \eeaa
\end{example}


\begin{example}
  Let $K(x)=J(|x|)=e^{-a|x|^2}$ with $a>0$. 
  It is easy to see that $K$ satisfies Assumption \ref{ass:kernel}.
  Then, $A$ is computed as
 \beaa
    A(r) &=& \dfrac{ 4 ar^2}{\pi} \int^{+\infty}_{0}  e^{-ar^2 \cosh^2 s} \cosh s ds \\
    &=& \dfrac{ 2 ar^2}{\pi} \int^{+\infty}_{0}  \dfrac{e^{-ar^2 (s+1)}}{\sqrt{s}}  ds \\
    &=& \dfrac{2ar^2}{\pi} e^{-ar^2} \sqrt{\dfrac{\pi}{ar^2}} = 2\sqrt{\dfrac{a}{\pi}} r e^{-ar^2}.
 \eeaa
\end{example}

The following function with compact support does not satisfy Assumption \ref{ass:kernel}. 
However, since we use it in the numerical simulation as in Fig. \ref{fig:Hap}, we give the calculation of $A$.
\begin{example}\label{ex:Hp}
Let $K$ be the following function:
\[
K(x)=(1-|x|)\chi_{B(1)}(x)
=\begin{cases}
    1-|x|, \ &\mbox{if} \ x \in B(1), \\
    0, &\mbox{otherwise}.
\end{cases}
\]
\end{example}
Then, $A$ is provided by
\[
A(r) 
= \frac{2r}{\pi}\log\Big( \frac{1}{r} + \sqrt{ \frac{1}{r^2} - 1 } \Big) \chi_{B(1)}(r), \quad r = |x|.
\]

\subsubsection{Three-dimensional case}

Finally, we prove Theorem \ref{thm:est_poly} for the case that $n=3$.
It should be noted that in this case, $k_j$ is represented as 
\beaa
k_j (x) = \dfrac{j^2}{4\pi |x|} e^{-j|x|}.
\eeaa

\begin{proof}[Proof of Theorem \ref{thm:est_poly}]
Let Assumption \ref{ass:kernel} for $n=3$ be enforced.
Then, we obtain
\beaa
\left\| K - K_{N+1} \right\|_{L^1} &=& 4\pi \int^{+\infty}_{0} r^2 \left| J(r) - \dsum^{N+1}_{j=1} \dfrac{j^2 \alpha_j}{4\pi r} e^{-jr} \right|dr \\
&=& 4\pi \int^{+\infty}_{0} re^{-r} \left| 
r e^{r} J(r) - \dsum^{N}_{j=0} \alpha_{j+1} c_{j,n} e^{-jr} \right|dr \\
&\le& 4\pi \max_{\lambda\in[0,1]}\left|h(\lambda) - P_{N}(\lambda) \right|
\eeaa
from the definition of $\{\alpha_j\}_{1\le j\le N+1}$.
The desired assertion in three-dimensional case is obtained.
\end{proof}

By a quite similar argument, we have a point-wise absolute error. 
\begin{corollary}\label{cor:d3}
Let $n=3$ and $d_j = j^{-2}$ for $j\in\bN$. 
Let Assumption \ref{ass:kernel} be enforced.
Then, we have
\beaa
\left|K(x) - K_{N+1}(x) \right| \le  \dfrac{e^{-|x|}}{|x|} \max_{\lambda\in[0,1]} \left|h(\lambda) - P_{N}(\lambda) \right|
\eeaa
for all $x\in\bR^3\backslash\{0\}$.
\end{corollary}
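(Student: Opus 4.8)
The plan is to read off the desired pointwise bound directly from the integrand that already appears in the $L^1$ computation in the proof of Theorem \ref{thm:est_poly} for $n=3$, without integrating. First I would fix $x \in \bR^3 \setminus \{0\}$ and set $r := |x| > 0$. Using the explicit three-dimensional form $k_j(x) = \frac{j^2}{4\pi|x|}e^{-j|x|}$ together with $K(x) = J(r)$, the quantity to be estimated is
\[
\left|K(x) - K_{N+1}(x)\right| = \left| J(r) - \frac{1}{r}\sum_{j=1}^{N+1} \frac{j^2\alpha_j}{4\pi}\,e^{-jr}\right|.
\]

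Next I would recall that $c_{j,n} = (j+1)^2/(4\pi)$, so that $j^2/(4\pi) = c_{j-1,n}$, reindex the sum by $j \mapsto j+1$, and factor $e^{-r}/r$ out of the absolute value. This is precisely the algebraic manipulation carried out inside the integral in the proof of Theorem \ref{thm:est_poly} for $n=3$, and it turns the bracket into $re^{r}J(r) - \sum_{j=0}^N \alpha_{j+1}c_{j,n}e^{-jr}$. The substitution $\lambda = e^{-r}$ (so that $-\log\lambda = r$ and $\lambda \in (0,1)$) then identifies $re^{r}J(r)$ with $h(\lambda)$ and $\sum_{j=0}^N\alpha_{j+1}c_{j,n}e^{-jr}$ with $P_N(\lambda)$, yielding the exact identity
\[
\left|K(x) - K_{N+1}(x)\right| = \frac{e^{-r}}{r}\,\bigl|h(\lambda) - P_N(\lambda)\bigr|.
\]

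Finally I would bound $|h(\lambda) - P_N(\lambda)|$ by its maximum over the closed interval $[0,1]$ and restore $r = |x|$. The only point requiring care, and the analogue of whatever obstacle is present in this otherwise routine argument, is that the range $r \in (0,+\infty)$ corresponds only to the \emph{open} interval $\lambda \in (0,1)$, so one must verify that $\lambda \mapsto |h(\lambda) - P_N(\lambda)|$ extends continuously to the endpoints $\lambda = 0$ and $\lambda = 1$ for the maximum over $[0,1]$ to be finite and to dominate the supremum over the open interval. This is exactly what Assumption \ref{ass:kernel} for $n=3$ provides: the existence of $\dlim_{r\to+\infty} re^{r}J(r) = h(0)$ and $\dlim_{r\to+0} rJ(r) = h(1)$ makes $h$ continuous on $[0,1]$, and since $P_N$ is a polynomial the difference is continuous on the compact interval $[0,1]$ and attains its maximum there. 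With this observation the estimate holds for every $x \neq 0$, which completes the proof.
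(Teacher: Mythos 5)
Your proposal is correct and follows exactly the argument the paper intends: the paper proves Corollary \ref{cor:d3} by remarking that it follows ``by a quite similar argument'' to the $n=3$ case of Theorem \ref{thm:est_poly}, namely reading the pointwise identity $|K(x)-K_{N+1}(x)| = \frac{e^{-r}}{r}|h(e^{-r})-P_N(e^{-r})|$ off the integrand before integrating, which is precisely what you do. Your additional remark that Assumption \ref{ass:kernel} supplies the continuous extension of $h$ to the endpoints $\lambda=0,1$, so that the supremum over $(0,1)$ is dominated by the maximum over $[0,1]$, is a correct and welcome clarification of a point the paper leaves implicit.
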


\begin{proof}[Proof of Theorem \ref{thm:w11}]
{\red From the assumption that $\dlim_{r\to+0} ( J(r) + r J'(r))$ exists, we see that $\dlim_{r\to+0} r J(r)$ exists.}
We note that 
\begin{align*}
h'(\lambda) 
&= - \frac{ J(-\log \lambda) - (\log  \lambda) J'(-\log  \lambda)  - (\log  \lambda) J(-\log  \lambda) }{\lambda^2} \\
&= -e^{2r}( J(r) + rJ'(r) + rJ(r) ), \ (-\log \lambda = r).
\end{align*}
We can define $h'(0) = \dlim_{r \to +\infty} -e^{2r}( J(r) + rJ'(r) + rJ(r) )$ and $h'(1) = \dlim_{r \to +0} -e^{2r}( J(r) + rJ'(r) + rJ(r) )$.
Using Corollary \ref{cor:d3}, we obtain that 
\begin{align*}
	\left\| (K - K_{N+1})/|\cdot|  \right\|_{L^1} 
	&\le 4\pi \Big( \int^{+\infty}_{0} r e^{-r}  dr \Big) \max_{\lambda\in[0,1]} \left|h(\lambda) - P_{N}(\lambda) \right|\\
	&= 4\pi   \max_{\lambda\in[0,1]} \left|h(\lambda) - P_{N}(\lambda) \right|.
\end{align*}
Additionally, using the notation $J_N(r) :=K_N(|x|) = \sum_{j=1}^N a_j k_j(|x|), \ r=|x|$, we see that 
\begin{align*}
J'_{N+1}( r ) =  -  \sum_{j=0} ^{N} j \alpha_{j+1} k_{j+1}(r) -J_{N+1}(r) - \frac{1}{ r } J_{N+1} (r).
\end{align*}
Then, we can estimate that
\begin{align*}
&\sum_{j=1}^3   \left\| \frac{ \partial }{ \partial x_j } ( K - K_{N+1} ) \right\|_{L^1} 
=  6 \pi  \int^{+\infty}_{0}  r^2  \left|   J'(r)  - J'_{N+1}(r)  \right| dr \\
& \le  6 \pi  \int^{+\infty}_{0}  r^2  \left|  J(r) + J'(r) + \frac{1}{ r } J(r) - J_{N+1}(r) - J'_{N+1}(r) - \frac 1 { r } J_{N+1} (r) \right| dr \\
& \quad + \frac{3 } 2   \left\|  K - K_{N+1}  \right\|_{L^1}  + \frac{3 } 2   \left\| ( K - K_{N+1} )/| \cdot | \right\|_{L^1}  \\
&\le 6\pi \Big( \int^{+\infty}_{0}  r e^{-2r }  dr \Big) \max_{ \lambda \in [0,1] } | h'(\lambda) - P_N'(\lambda) |  + 12 \pi \max_{ \lambda \in [0,1] } | h(\lambda) - P_N(\lambda) | \\
&= \frac{3\pi} 2  \max_{ \lambda \in [0,1] } | h'(\lambda) - P_N'(\lambda) | + 12 \pi \max_{ \lambda \in [0,1] } | h(\lambda) - P_N(\lambda) |. 
\end{align*}
Thus, we obtain the desired assertion in three-dimensional case.
\end{proof}

\subsection{Bernstein Polynomial}\label{sec:BP}
We discuss the approximation of integral kernels using polynomial approximations.
Here, we review a polynomial approximation by the Bernstein polynomial.

Let $h:[0,1]\to\bR$ and $N\in\bN$.
We denote the Bernstein polynomial of degree $N$ to the function $h$ by
\beaa
B_{N}[h](\lambda) := \dsum^{N}_{\nu=0} h\left( \dfrac{\nu}{N} \right) b_{\nu,N}(\lambda),\
 b_{\nu,N}(\lambda):= \dbinom{N}{\nu} \lambda^{\nu} (1-\lambda)^{N-\nu},\ (\nu=0,1,\ldots,N).
\eeaa
We now proceed to polynomial expansion of $B_{N}[h]$.
Since we find
\beaa
b_{\nu,N}(\lambda) &=& \dbinom{N}{\nu} \dsum^{N-\nu}_{j=0} (-1)^{j}\dbinom{N-\nu}{j}  \lambda^{j+\nu}  
= \dbinom{N}{\nu} \dsum^{N}_{j=\nu} (-1)^{j-\nu}\dbinom{N-\nu}{j-\nu}  \lambda^{j}  \\
&=&\dsum^{N}_{j=\nu} (-1)^{j-\nu}\dbinom{N}{j}\dbinom{j}{\nu}  \lambda^{j},\quad (\nu=0,1,\ldots,N),
\eeaa
we get
\beaa
B_{N}[h](\lambda) &=& \dsum^{N}_{\nu=0} \dsum^{N}_{j=\nu} (-1)^{j-\nu} h\left( \dfrac{\nu}{N} \right)  \dbinom{N}{j}\dbinom{j}{\nu}
 \lambda^{j} \\
&=& \dsum^{N}_{j=0} \left( \dsum^{j}_{\nu=0} (-1)^{j-\nu}  h\left( \dfrac{\nu}{N} \right)  \dbinom{N}{j}\dbinom{j}{\nu}  \right) \lambda^{j} = \dsum^{N}_{j=0} \beta_{j,N}[h] \lambda^{j},
\eeaa
where $\{\beta_{j,N}[h]\}_{0\le j\le N}$ is defined by \eqref{coef}.
The following theorem of polynomial approximation is known.

\begin{lemma}[\cite{Lorentz}]\label{lem:poly}
For $m\in\{0,1\}$, there exists a constant $E(m)>0$ such that if $h\in C^{m}([0,1])$, then we have
\beaa
\max_{\lambda\in[0,1]} |h(\lambda)-B_N[h](\lambda)| \le E(m)N^{-m/2} \omega(h^{(m)},N^{-1/2}).
\eeaa
\end{lemma}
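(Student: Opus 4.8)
The plan is to prove both cases ($m=0$ and $m=1$) from a short list of identities for the Bernstein basis $b_{\nu,N}$ combined with the standard scaling estimate for the modulus of continuity. First I would record the three moment identities
\beaa
\sum_{\nu=0}^{N} b_{\nu,N}(\lambda) = 1, \quad \sum_{\nu=0}^{N} \left(\frac{\nu}{N}-\lambda\right) b_{\nu,N}(\lambda) = 0, \quad \sum_{\nu=0}^{N} \left(\frac{\nu}{N}-\lambda\right)^{2} b_{\nu,N}(\lambda) = \frac{\lambda(1-\lambda)}{N} \le \frac{1}{4N},
\eeaa
which follow from the binomial theorem and its first two derivatives (equivalently, from reading $\nu$ as a Binomial$(N,\lambda)$ variable with mean $N\lambda$ and variance $N\lambda(1-\lambda)$). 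I would also use the elementary scaling inequality $\omega(h,c\eta)\le(1+c)\omega(h,\eta)$ for $c\ge 0$, which lets a modulus at scale $|\lambda-\nu/N|$ be compared to one at a free reference scale $\delta$.

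For $m=0$, I would use the partition-of-unity identity to write $h(\lambda)-B_N[h](\lambda)=\sum_{\nu}\bigl(h(\lambda)-h(\nu/N)\bigr)b_{\nu,N}(\lambda)$, bound each difference by $\omega(h,|\lambda-\nu/N|)\le(1+\delta^{-1}|\lambda-\nu/N|)\,\omega(h,\delta)$, and sum. The constant part contributes exactly $\omega(h,\delta)$, while the remaining sum $\sum_{\nu}|\lambda-\nu/N|\,b_{\nu,N}(\lambda)$ is controlled, via Cauchy--Schwarz and the variance identity above, by $1/(2\sqrt N)$. Choosing $\delta=N^{-1/2}$ collapses the estimate to a constant multiple of $\omega(h,N^{-1/2})$, giving $E(0)=3/2$.

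For $m=1$, the improvement to the rate $N^{-1/2}$ comes from a cancellation. I would write $h(\lambda)-h(\nu/N)=\int_{\nu/N}^{\lambda}h'(t)\,dt$ and split $h'(t)=\bigl(h'(t)-h'(\lambda)\bigr)+h'(\lambda)$; the contribution of $h'(\lambda)$ is $h'(\lambda)\sum_{\nu}(\lambda-\nu/N)b_{\nu,N}(\lambda)=0$ by the first-moment identity. What remains is bounded termwise by $|\lambda-\nu/N|\,\omega(h',|\lambda-\nu/N|)$, and after applying the scaling inequality the sum reduces to precisely the first and second moments already estimated above. Taking $\delta=N^{-1/2}$ yields a bound of the form $\bigl(\tfrac12+\tfrac14\bigr)N^{-1/2}\omega(h',N^{-1/2})$, so $E(1)=3/4$ works.

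The main obstacle is the $m=1$ case: the naive bound $|h(\lambda)-h(\nu/N)|\le|\lambda-\nu/N|\,\|h'\|_{\infty}$ only reproduces the $m=0$ rate, and the extra factor $N^{-1/2}$ hinges on exploiting the vanishing first moment to remove the term linear in $(\lambda-\nu/N)$ before estimating. Everything else is a routine combination of the moment identities, Cauchy--Schwarz, and the optimal choice $\delta=N^{-1/2}$.
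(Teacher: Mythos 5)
Your proof is correct and is essentially the classical argument from the cited reference [Lorentz]: the paper itself gives no proof of this lemma, simply quoting it, and your use of the moment identities, the subadditivity $\omega(h,c\eta)\le(1+c)\omega(h,\eta)$, Cauchy--Schwarz, and (for $m=1$) the cancellation via the vanishing first central moment is exactly the standard route. Your constants $E(0)=3/2$ and $E(1)=3/4$ are admissible since the lemma only asserts existence of some $E(m)$; the sharper value $E(0)=5/4$ mentioned in the paper's remark requires a finer case analysis than the plain Cauchy--Schwarz step, but nothing in the paper depends on it.
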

\begin{remark}
    It is known that we can choose $E(0)=5/4$ and $E(1)=3/4$ from \cite{Lorentz}.
    Moreover, it is also known that the convergence order becomes faster with the smoothness of $h$.
\end{remark}


Convergence of derivatives has also been reported by \cite{Floater,Lorentz}.
Especially, the following result is known with respect to the order of convergence:
\begin{lemma}[\cite{Floater}]
If $h\in C^{m+2}([0,1])$ for some $m\ge 0$, then 
\beaa
&&\max_{\lambda\in[0,1]}\left| \dfrac{d^m}{d\lambda^m}\big[h(\lambda)- B_N[h](\lambda) \big] \right|\\
&&\le 
\dfrac{1}{2N} \left( m(m+1) \max_{\lambda\in[0,1]}|h^{(m)}(\lambda)|+  m  \max_{\lambda\in[0,1]}|h^{(m+1)}(\lambda)|+ \dfrac{1}{4}\max_{\lambda\in[0,1]}|h^{(m+2)}(\lambda)| \right).
\eeaa
\end{lemma}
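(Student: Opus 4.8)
The plan is to reduce the statement to the explicit formula for the derivatives of the Bernstein operator and then compare the resulting forward differences with $h^{(m)}$ via Taylor expansion, using the $C^{m+2}$ hypothesis to control the remainder. Write $M_k:=\max_{\lambda\in[0,1]}|h^{(k)}(\lambda)|$ and let $\Delta_t$ denote the forward difference with step $t$. First I would establish, by induction on $m$, the representation
\[
\dfrac{d^m}{d\lambda^m}B_N[h](\lambda)=\dfrac{N!}{(N-m)!}\dsum^{N-m}_{k=0}\big(\Delta^m_{1/N}h\big)\!\Big(\dfrac kN\Big)\,b_{k,N-m}(\lambda).
\]
The induction step uses only the basis identity $\tfrac{d}{d\lambda}b_{\nu,n}(\lambda)=n\big(b_{\nu-1,n-1}(\lambda)-b_{\nu,n-1}(\lambda)\big)$ followed by an index shift (Abel summation); the case $m=1$ gives $B_N'[h](\lambda)=N\dsum_{k}\Delta_{1/N}h(k/N)\,b_{k,N-1}(\lambda)$, which matches the prefactor $N!/(N-1)!=N$.

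Next I would expand the forward difference against $h^{(m)}$. Iterating $\Delta_t g(x)=\int_0^t g'(x+u)\,du$ yields the Hermite--Genocchi representation $\big(\Delta^m_{1/N}h\big)(k/N)=N^{-m}\int_{[0,1]^m}h^{(m)}\big(k/N+N^{-1}(s_1+\cdots+s_m)\big)\,ds$. Taylor expanding $h^{(m)}$ to first order (this is exactly where $h\in C^{m+2}$ enters, through the integral remainder) and using $\int_{[0,1]^m}(s_1+\cdots+s_m)\,ds=m/2$ gives
\[
N^m\big(\Delta^m_{1/N}h\big)\!\Big(\dfrac kN\Big)=h^{(m)}\!\Big(\dfrac kN\Big)+\dfrac{m}{2N}h^{(m+1)}\!\Big(\dfrac kN\Big)+\rho_k,\qquad |\rho_k|\le \dfrac{C}{N^2}\,M_{m+2}.
\]
Substituting this into the derivative formula and abbreviating $c_{N,m}:=N!\big/\big((N-m)!\,N^m\big)=\prod_{i=1}^{m-1}(1-i/N)$, I would compare with $h^{(m)}(\lambda)=h^{(m)}(\lambda)\dsum_k b_{k,N-m}(\lambda)$ and split the error into three contributions: the prefactor deviation $1-c_{N,m}=\tfrac{m(m-1)}{2N}+O(N^{-2})$ acting on the $M_m$-scale; the node mismatch, since the mean of $b_{k,N-m}$ is $(N-m)\lambda/N\neq\lambda$, which through the first binomial moment feeds the $M_{m+1}$-scale and through the second moment $\dsum_k\big(k/N-(N-m)\lambda/N\big)^2 b_{k,N-m}(\lambda)\le 1/(4N)$ produces the $\tfrac{1}{2N}\cdot\tfrac14 M_{m+2}$ term; and the explicit first-order correction $\tfrac{m}{2N}h^{(m+1)}$ together with the remainders $\rho_k$. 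Collecting these pieces and bounding each by the maximum norms assembles into the claimed inequality.

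The routine parts are the induction and the moment computations for the binomial weights. The main obstacle I anticipate is obtaining the \emph{sharp} constants $m(m+1)$, $m$, and $1/4$: each of the $M_m$ and $M_{m+1}$ coefficients receives contributions from the prefactor $1-c_{N,m}$, from the node-mismatch moments, and from the first-order finite-difference correction, so a naive triangle-inequality split over the three error sources would over-count. Getting exactly $\tfrac{1}{2N}\big(m(m+1)M_m+m\,M_{m+1}+\tfrac14 M_{m+2}\big)$ requires organizing the expansion so that the cross terms of order $N^{-1}$ are grouped correctly and the genuinely $O(N^{-2})$ terms (e.g. prefactor times correction) are discarded into the $M_{m+2}$ remainder, which is the delicate bookkeeping step.
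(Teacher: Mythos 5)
First, a point of reference: the paper does not prove this lemma at all --- it is quoted from Floater's article and used as a black box, so there is no in-paper argument to compare yours against. Your outline follows the standard route one would take (and essentially the one behind Floater's result): the forward-difference representation \((B_N[h])^{(m)}(\lambda)=\tfrac{N!}{(N-m)!}\sum_{k}(\Delta^m_{1/N}h)(k/N)\,b_{k,N-m}(\lambda)\), the Hermite--Genocchi integral form of \(\Delta^m_{1/N}h\), and a first-order Taylor expansion whose remainder is controlled by the \(C^{m+2}\) hypothesis. Every identity you write down is correct.

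The gap is exactly where you flag it, and it is not merely "delicate bookkeeping": the lemma is a purely quantitative statement, and your proposal stops before producing the constants. Two concrete issues. (i) The two order-\(N^{-1}\) contributions to the \(h^{(m+1)}\) term --- the first binomial moment \(\sum_k(k/N-\lambda)\,b_{k,N-m}(\lambda)=-m\lambda/N\) and the midpoint shift \(+m/(2N)\) coming from \(\int_{[0,1]^m}(s_1+\cdots+s_m)\,ds=m/2\) --- must be added \emph{before} taking absolute values, giving \(\tfrac{m}{N}\bigl|\tfrac12-\lambda\bigr|\le\tfrac{m}{2N}\); listed as separate error sources, as in your proposal, they contribute \(\tfrac{3m}{2N}\), which overshoots the stated coefficient \(m\). (ii) The residual \(O(N^{-2})\) pieces --- the bias-squared term \(m^2(\tfrac12-\lambda)^2/N^2\) in the second moment, the variance \(m/(12N^2)\) of \(s_1+\cdots+s_m\), and the product of the prefactor deficit with the first-order correction --- are \emph{not} uniformly dominated by the slack left in \(\tfrac{1}{4N}\) (e.g.\ at \(\lambda=0\) the bias term alone is \(m^2/(4N^2)\), which exceeds \(1/(4N)\) once \(m^2>N\)), so "discarding them into the \(M_{m+2}\) remainder" requires a genuine argument or a different arrangement of the expansion, not just a remark. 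As a sanity check on the target constants: the prefactor deficit obeys \(1-\prod_{i=1}^{m-1}(1-i/N)\le\tfrac{m(m-1)}{2N}\), so the natural output of your scheme is \(m(m-1)\) in place of the \(m(m+1)\) printed in the lemma (the printed bound is the weaker of the two and hence still valid); your computation will not literally reproduce the displayed coefficient, which is worth noting so you do not chase a constant your method cannot yield. In summary, the strategy is sound and would give \(O(N^{-1})\) convergence, but the stated inequality with its explicit constants is not established by the proposal as written.
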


\subsection{Lagrange Polynomial}\label{sec:Lag}
In this subsection we introduce another candidate for the polynomial $P_N$ by using the Lagrange interpolation polynomial with the Chebyshev nodes.
We utilize the result of the coefficient determination of the Lagrange polynomial in the case on $[0,1]$ by \cite{MT}.
We firstly prepare the notations.
We set 
\begin{align*}
    C_{k,N} :=  (-1)^k  2^{N-2k-1} \frac{N}{N-k} \binom{N-k}{k}, \ (k=0,1, \ldots, \Big[\frac N 2 \Big] ), \ N \in \bN,
\end{align*}
where $[\cdot]$ is the Gauss symbol.
Using this notation to the Chebyshev polynomial, we obtain the expression $ T_N(x) = \sum_{k=0}^{[N/2]}C_{k,N} x^{N-2k}$.
Next, we prepare the following constants: 
\begin{equation*}
    \mu^{(N)}_{k,j}:=  (-1)^j 2 ^{ N -2k-j}\begin{pmatrix} N - 2k \\ j \end{pmatrix} C_{k,N}, \ N \in \bN,
\end{equation*}
\begin{equation*}
    \xi_{k,N} :=
    \left\{
    \begin{aligned}
        &\sum_{ \nu=0}^{[N/2]-[(k+1)/2]}  \mu^{(N)}_{\nu,N-2\nu-k}, \quad \text{if $N$ is even}, \\ 
        &\sum_{ \nu=0}^{[N/2]- [ k/2 ] }  \mu^{(N)}_{\nu,N-2\nu-k}, \quad \text{otherwise}. \\ 
    \end{aligned}
    \right.
\end{equation*}
Utilizing these coefficients, with respect to the shifted Chebyshev polynomial we have $T_N( 2x - 1 ) = \sum_{k=0}^N \xi_{k,N} x^k, \ x \in [0,1].$
This proof is written in \cite{MT}.

Let us denote the Chebyshev nodes by
\begin{equation*}
    r_{j,N}:= \frac{1}{2} + \frac{1}{2}\cos \frac{2j+1}{2N}\pi, \quad (j=0,1,\ldots, N-1).
\end{equation*}
For the function $h$ defined in Assumption \ref{ass:kernel}, setting 
\begin{equation*}
    \zeta_{j,N} [h]:= \frac{h( r_{j,N+1} )}{\prod_{k=0,k\neq j}^N (r_{j,N+1} - r_{k,N+1} )}, \quad (j=0,1,\ldots, N), \ N \in \bN,
\end{equation*}
we define the Lagrange polynomial as
\begin{equation*}
    L_N(\lambda):= \sum_{j=0}^N  \zeta_{j,N} \prod_{k=0,k\neq j}^N (\lambda - r_{k,N+1} ).
\end{equation*}
Regarding the determination of the coefficients of the Lagrange polynomial given interpolation points using Chebyshev nodes, the following lemma holds.
\begin{lemma}[\cite{MT}]
For $N \in \bN$, we set  the coefficients as
    \begin{align*}
        &\tau_{j,\nu}^{(N)} := \sum_{k=j}^{N-1} (r_{\nu,N})^{k-j} \xi_{k+1,N}, \quad (j=0,1,\ldots,N-1), \ (\nu=0,1,\ldots,N-1),\notag\\
        &l_{j,N} [h]:= \frac{1}{2^{2N+1}}  \sum_{ \nu=0}^N \zeta_{\nu,N} [h] \tau_{j,\nu}^{(N+1)}, \quad (j=0,1,\ldots,N). 
    \end{align*}
Then, it holds
    \begin{equation*}
        L_N(\lambda)= \sum_{j=0}^N l_{j,N} [h] \lambda^j, \quad \lambda \in [0,1].
    \end{equation*}
\end{lemma}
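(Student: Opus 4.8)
The plan is to prove the identity $L_N(\lambda)=\sum_{j=0}^N l_{j,N}[h]\lambda^j$ by expanding the Lagrange interpolation formula into monomials and tracking the coefficient of each power $\lambda^j$. First I would recall that the Lagrange polynomial is written as a sum over the nodes,
\[
L_N(\lambda)=\sum_{\nu=0}^N \zeta_{\nu,N}[h]\prod_{k=0,k\neq\nu}^N(\lambda-r_{k,N+1}),
\]
so the task reduces to expanding each product $\prod_{k\neq\nu}(\lambda-r_{k,N+1})$ as a polynomial in $\lambda$ and identifying the coefficient of $\lambda^j$. The key structural observation is that the nodes $r_{k,N+1}$ are the $N+1$ roots of the shifted Chebyshev polynomial $T_{N+1}(2\lambda-1)$, so $\prod_{k=0}^N(\lambda-r_{k,N+1})$ equals $T_{N+1}(2\lambda-1)$ up to the known leading constant $2^{2N+1}$ coming from $C_{0,N+1}=2^{N}$ together with the factor $2^{N+1}$ from the argument scaling. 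This is exactly why the prefactor $1/2^{2N+1}$ appears in the definition of $l_{j,N}[h]$.

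The main computational step is to handle the single omitted factor. I would use the standard trick of writing the product over $k\neq\nu$ as a polynomial division: since $\prod_{k=0}^N(\lambda-r_{k,N+1})=2^{-(2N+1)}T_{N+1}(2\lambda-1)$, dividing out the linear factor $(\lambda-r_{\nu,N+1})$ gives
\[
\prod_{k=0,k\neq\nu}^N(\lambda-r_{k,N+1})=\frac{1}{2^{2N+1}}\,\frac{T_{N+1}(2\lambda-1)}{\lambda-r_{\nu,N+1}}.
\]
Writing $T_{N+1}(2\lambda-1)=\sum_{k=0}^{N+1}\xi_{k,N+1}\lambda^k$ from the stated identity and performing synthetic division of this polynomial by $(\lambda-r_{\nu,N+1})$, the quotient has $\lambda^j$-coefficient equal to $\sum_{k=j}^{N}(r_{\nu,N+1})^{k-j}\xi_{k+1,N+1}$, which is precisely the definition of $\tau_{j,\nu}^{(N+1)}$. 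Substituting this back and collecting the sum over $\nu$ then yields the coefficient of $\lambda^j$ in $L_N(\lambda)$ as $\tfrac{1}{2^{2N+1}}\sum_{\nu=0}^N\zeta_{\nu,N}[h]\,\tau_{j,\nu}^{(N+1)}=l_{j,N}[h]$, as claimed.

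The step I expect to be the main obstacle is verifying the synthetic-division coefficient formula cleanly, i.e.\ confirming that if $p(\lambda)=\sum_{k=0}^{N+1}\xi_{k,N+1}\lambda^k$ has $r$ as a root then $p(\lambda)/(\lambda-r)=\sum_{j=0}^N\big(\sum_{k=j}^N r^{k-j}\xi_{k+1,N+1}\big)\lambda^j$, and in reconciling index conventions between $\xi_{k,N+1}$ (indexed up to $N+1$) and $\tau_{j,\nu}^{(N+1)}$ (which uses $\xi_{k+1,N+1}$ with $k$ running to $N-1$ in the stated definition but should run to $N$ here). I would check the root property is genuinely used so that the remainder term vanishes, and double-check the leading-coefficient bookkeeping $C_{0,N+1}=2^{N}$ against the $2^{2N+1}$ normalization. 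Since the polynomial identities $T_N(2x-1)=\sum_k\xi_{k,N}x^k$ and the division recursion are elementary, once the index alignment is settled the result follows by matching coefficients, and I would cite \cite{MT} for the underlying shifted-Chebyshev coefficient computation.
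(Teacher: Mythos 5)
Your argument is correct and is the natural (and essentially the only) proof: the paper itself does not prove this lemma but defers to \cite{MT}, and your route---identifying $\prod_{k=0}^{N}(\lambda-r_{k,N+1})$ with $2^{-(2N+1)}T_{N+1}(2\lambda-1)$ via the leading coefficient $C_{0,N+1}\cdot 2^{N+1}=2^{2N+1}$, then synthetic division by the root factor $(\lambda-r_{\nu,N+1})$---is exactly the computation behind the stated coefficients. The index worry you flag resolves itself: substituting $N\mapsto N+1$ in the definition of $\tau_{j,\nu}^{(N)}$ makes $k$ run from $j$ to $N$, which matches your division formula $\sum_{k=j}^{N}(r_{\nu,N+1})^{k-j}\xi_{k+1,N+1}$ term for term.
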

Using the properties of the Lagrange and Chebyshev polynomial as in \S 6.5 in \cite{MH2003} and \S 1.3 in \cite{R1990}, we obtain the following estimates.
\begin{lemma}\label{lemm:Lag}
For $h \in \mathrm{Lip}([0,1])$, it holds that
\begin{align*}
    &\max_{ \lambda \in [0,1] } |  h(\lambda) - L_N(\lambda) |
   \le   (1 + \mu_N )  \omega ( h, N^{-1} ),
\end{align*}
where $\mu_N = (2/\pi)\log N + 1$ that comes from the Lebesgue constant.
\end{lemma}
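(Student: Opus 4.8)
The plan is to treat $L_N$ as the Lagrange interpolation projection onto the space $\Pi_N$ of polynomials of degree at most $N$ attached to the $N+1$ shifted Chebyshev nodes $r_{0,N+1},\dots,r_{N,N+1}$, and to estimate its error by the classical route through the Lebesgue constant and the error of best uniform approximation. Writing $\ell_{j,N}(\lambda):=\prod_{k\neq j}(\lambda-r_{k,N+1})/(r_{j,N+1}-r_{k,N+1})$ for the Lagrange basis, the definition of $L_N$ in this subsection reads $L_N h=\sum_{j=0}^{N} h(r_{j,N+1})\,\ell_{j,N}$, so $L_N$ is linear and reproduces polynomials: $L_N p=p$ for every $p\in\Pi_N$, because two polynomials of degree $\le N$ that agree at $N+1$ distinct points coincide. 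Introducing the Lebesgue constant $\Lambda_N:=\max_{\lambda\in[0,1]}\sum_{j=0}^{N}|\ell_{j,N}(\lambda)|$, which is exactly the operator norm of $L_N$ on $C([0,1])$, and the best-approximation error $E_N(h):=\inf_{p\in\Pi_N}\|h-p\|_{C([0,1])}$, the whole statement reduces to a single inequality.

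First I would fix an arbitrary $p\in\Pi_N$ and use reproduction to write $h-L_N h=(h-p)-L_N(h-p)$; the triangle inequality together with $\|L_N g\|_{C([0,1])}\le\Lambda_N\|g\|_{C([0,1])}$ then gives $\|h-L_N h\|_{C([0,1])}\le(1+\Lambda_N)\|h-p\|_{C([0,1])}$, and taking the infimum over $p$ yields
\[
\|h-L_N h\|_{C([0,1])}\le(1+\Lambda_N)\,E_N(h).
\]
It remains to bound the two factors. For the Lebesgue constant I would observe that $\Lambda_N$ is invariant under the affine change of variables $\lambda\mapsto 2\lambda-1$ carrying $[0,1]$ onto $[-1,1]$, so it equals the Lebesgue constant for interpolation at the zeros of $T_{N+1}$; the classical estimate recalled in Section 1.3 of \cite{R1990} then gives $\Lambda_N\le(2/\pi)\log N+1=\mu_N$. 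For the best-approximation factor I would invoke the Jackson-type estimate for Lipschitz functions from Section 6.5 of \cite{MH2003}, which provides $E_N(h)\le\omega(h,N^{-1})$.

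Combining these bounds gives $\|h-L_N h\|_{C([0,1])}\le(1+\mu_N)\,\omega(h,N^{-1})$, and since $\max_{\lambda\in[0,1]}|h(\lambda)-L_N(\lambda)|$ is precisely this sup-norm, the claim follows. The routine parts are the projection identity and the triangle-inequality step; the quantitative content comes from the cited references, so the main obstacle is verifying that the constants line up exactly as stated. Concretely, one must confirm the sharp logarithmic growth $\Lambda_N\le(2/\pi)\log N+1$ for the \emph{shifted} Chebyshev nodes on $[0,1]$ — here the affine invariance of the Lebesgue function is what legitimises transferring the standard $[-1,1]$ estimate — and that the Jackson bound can be taken with multiplicative constant $1$ against $\omega(h,N^{-1})$, so that the product is exactly $1+\mu_N$ rather than some larger absolute multiple.
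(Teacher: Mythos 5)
Your proposal is correct and is essentially the argument the paper itself relies on: the paper gives no written proof of this lemma, only pointing to Section 6.5 of \cite{MH2003} and Section 1.3 of \cite{R1990}, which is precisely the near-best-approximation identity $\|h-L_Nh\|\le(1+\Lambda_N)E_N(h)$ combined with the Chebyshev Lebesgue-constant bound and a Jackson-type estimate that you spell out. You also correctly isolate the only delicate point, namely that the cited references deliver the Lebesgue constant for the shifted nodes on $[0,1]$ and the Jackson bound with multiplicative constant $1$, so that the product is exactly $1+\mu_N$.
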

Furthermore, for the smooth functions, the following estimate is known.
\begin{lemma}[\cite{C1998}, \S 4.6]\label{lem:TLag_sm}
For $h \in C^{m}([0,1])$ with $m \in \bN$, it holds that
\begin{equation*}
    \max_{ \lambda \in [0,1] } | h(\lambda) - L_N(\lambda) |
    \le \frac{1}{2} \Big( \frac{\pi}{2}\Big)^m \|h^{(m)} \|_{C([0,1])} (N-m+2)^{-m}, \quad N\ge m.
\end{equation*}
\end{lemma}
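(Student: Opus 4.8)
The plan is to reduce the estimate to the classical theory of Chebyshev interpolation on the symmetric interval $[-1,1]$, where the nodes $r_{j,N+1}$ become the standard Chebyshev zeros, and then to track the constants through an affine change of variables. Concretely, I would set $g(x):=h((x+1)/2)$ for $x\in[-1,1]$, so that $h(\lambda)=g(2\lambda-1)$ and the interpolation nodes $r_{j,N+1}=\frac12+\frac12\cos\frac{2j+1}{2(N+1)}\pi$ correspond exactly to the zeros $\cos\frac{2j+1}{2(N+1)}\pi$ of $T_{N+1}$ on $[-1,1]$. Under this map the Lagrange interpolant $L_N$ of $h$ at $\{r_{j,N+1}\}$ coincides with the Chebyshev interpolant $p_N$ of $g$ at the Chebyshev zeros, whence $\max_{\lambda\in[0,1]}|h(\lambda)-L_N(\lambda)|=\max_{x\in[-1,1]}|g(x)-p_N(x)|$. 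Since $g^{(m)}(x)=2^{-m}h^{(m)}((x+1)/2)$, we have $\|g^{(m)}\|_{C([-1,1])}=2^{-m}\|h^{(m)}\|_{C([0,1])}$, and this factor $2^{-m}$ is precisely what converts the symmetric-interval constant $\frac12\pi^m$ into the $[0,1]$ constant $\frac12(\pi/2)^m$ claimed in the statement.

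It then remains to prove (or to invoke from \cite{C1998}) the symmetric-interval estimate $\|g-p_N\|_{C([-1,1])}\le \frac12\pi^m\|g^{(m)}\|_{C([-1,1])}(N-m+2)^{-m}$ for $N\ge m$. For this I would use the aliasing description of Chebyshev interpolation rather than a Lebesgue-constant bound. Writing the Chebyshev expansion $g=\frac{a_0}{2}+\sum_{k\ge1}a_kT_k$ with $a_k=\frac{2}{\pi}\int_0^\pi g(\cos\theta)\cos(k\theta)\,d\theta$, interpolation at the $N+1$ Chebyshev zeros folds the high modes onto the low ones, and the standard tail bound $\|g-p_N\|_{C([-1,1])}\le 2\sum_{k>N}|a_k|$ follows from $\max_{x\in[-1,1]}|T_k(x)|=1$. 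Integrating by parts $m$ times in the formula for $a_k$ transfers the smoothness of $g$ into decay of the coefficients, yielding $|a_k|\le C_m\,k^{-m}\|g^{(m)}\|_{C([-1,1])}$; summing the tail $\sum_{k>N}k^{-m}$ and tracking the numerical constants produces the explicit factor $\frac12\pi^m$ together with the sharpened denominator $(N-m+2)^{-m}$.

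The main obstacle I anticipate is obtaining the clean, logarithm-free rate together with the sharp constant. The crude route, bounding $\|g-p_N\|_{C([-1,1])}\le(1+\Lambda_N)E_N(g)$ with $\Lambda_N$ the Lebesgue constant of the Chebyshev nodes and $E_N(g)$ the best polynomial approximation, only yields $O(N^{-m}\log N)$ because $\Lambda_N\sim\frac{2}{\pi}\log N$; this is exactly the $\log N$ loss already visible in the Lipschitz estimate of Lemma \ref{lemm:Lag}. Removing that factor for $C^m$ data requires the finer aliasing analysis above, and the precise constant $\frac12\pi^m$ and the form $(N-m+2)^{-m}$ demand careful bookkeeping of the integration-by-parts constants and of the coefficient-tail summation; this delicate constant tracking is the technical heart of the argument and is the reason the result is borrowed from \cite{C1998}. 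Finally I would verify that $h\in C^m([0,1])$ transfers to $g\in C^m([-1,1])$ (immediate, since the change of variables is affine) and that the restriction $N\ge m$ is respected so that $(N-m+2)^{-m}$ is well defined.
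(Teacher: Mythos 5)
The paper offers no proof of this lemma at all --- it is imported verbatim from \cite{C1998} --- so there is no argument of the authors' to compare yours against; I can only assess your attempted reconstruction. Your first step is fine: the affine map $\lambda=(x+1)/2$ sends the nodes $r_{j,N+1}$ to the zeros of $T_{N+1}$ on $[-1,1]$, the interpolants correspond, and $\|g^{(m)}\|_{C([-1,1])}=2^{-m}\|h^{(m)}\|_{C([0,1])}$ correctly converts a putative constant $\tfrac12\pi^m$ on $[-1,1]$ into $\tfrac12(\pi/2)^m$ on $[0,1]$.

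The gap is in the core estimate on $[-1,1]$. Two things go wrong with the aliasing-plus-integration-by-parts route as you describe it. First, integrating by parts $m$ times in $a_k=\frac{2}{\pi}\int_0^\pi g(\cos\theta)\cos(k\theta)\,d\theta$ bounds $|a_k|$ by $k^{-m}$ times the sup of $\frac{d^m}{d\theta^m}\,g(\cos\theta)$, and by the chain rule this quantity involves \emph{all} of $g',\dots,g^{(m)}$, not $\|g^{(m)}\|$ alone; a bound of the form $|a_k|\le C_m k^{-m}\|g^{(m)}\|$ does not follow without an extra device (e.g.\ subtracting a Taylor polynomial, which then disturbs the constants you are trying to track). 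Second, and more fatally, even granting $|a_k|\le C_m k^{-m}\|g^{(m)}\|$, the tail bound $\|g-p_N\|\le 2\sum_{k>N}|a_k|$ yields only $\sum_{k>N}k^{-m}\sim N^{1-m}/(m-1)$, i.e.\ the rate $O(N^{-(m-1)})$ rather than the claimed $O(N^{-m})$; for $m=1$ the tail is not even summable, so the method produces nothing. The coefficient-decay route gives $O(N^{-m})$ only under the strictly stronger hypothesis that $g^{(m)}$ has bounded variation, in which case $|a_k|$ decays like $k^{-m-1}$; for $g\in C^m$ alone it cannot. The classical path to a constant of the form $(\pi/2)^m\|f^{(m)}\|/\bigl((N+1)N\cdots(N-m+2)\bigr)$ is the iterated Jackson inequality $E_n(f)\le \frac{\pi}{2(n+1)}E_{n-1}(f')$ applied $m$ times, combined with the passage from best approximation to interpolation at the Chebyshev nodes; that is the machinery in \cite{C1998}, and your sketch does not substitute for it.
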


\subsection{Polynomial approximation and numerical examples}
We first provide the proof of Corollary \ref{cor:ker}. 

\begin{proof}[Proof of Corollary \ref{cor:ker}]
Setting $\alpha_j = \beta_{j-1,N}[h]/c_{j-1,n}$ for $j=1,2,\ldots,N$ and using  Theorem \ref{thm:est_poly} and Lemma \ref{lem:poly}, we obtain the first assertion.
Similarly, setting $\alpha_j = l_{j-1,N}[h]/c_{j-1,n}$ for $j=1,2,\ldots,N$ and using  Theorem \ref{thm:est_poly} and Lemma \ref{lemm:Lag}, we obtain the second assertion.
\end{proof}

Now, we present numerical examples of the 
approximation of a kernel.
We first treat the case that the kernel is given by
\beaa
K(x)= e^{-|x|^2}.
\eeaa
Note that $K$ satisfies Assumption \ref{ass:kernel} for all $n \in \{ 1,2,3\}$.
The graphs of $K$ and $K_{N+1}$ using the Bernstein polynomial are shown in Fig. \ref{fig:app}.
It can be visually confirmed that the graphs of $K$ and $K_{N+1}$ are similar in the case that $N=30$.
When $n=3$, the graphs are a bit separated near the origin,
which is expected due to the singularity of the Green function there.
The values in the middle of $\{\alpha_j\}$ are large, on the order of $10^{4}$ or more.
Since $\{\alpha_j\}$ are defined by using \eqref{coef},
increasing $j$ is thought to cause it by increasing the value of the binomial coefficient.

Secondly, Fig. \ref{fig:Lag} shows the numerical results by using the Lagrange polynomial with Chebyshev nodes.
It is observed that the graphs of $K$ and $K_{N+1}$ are similar in the case that $N=10$.

\begin{figure}[bt] 
\begin{center}
	\includegraphics[width=12cm, bb= 0 0 613 221]{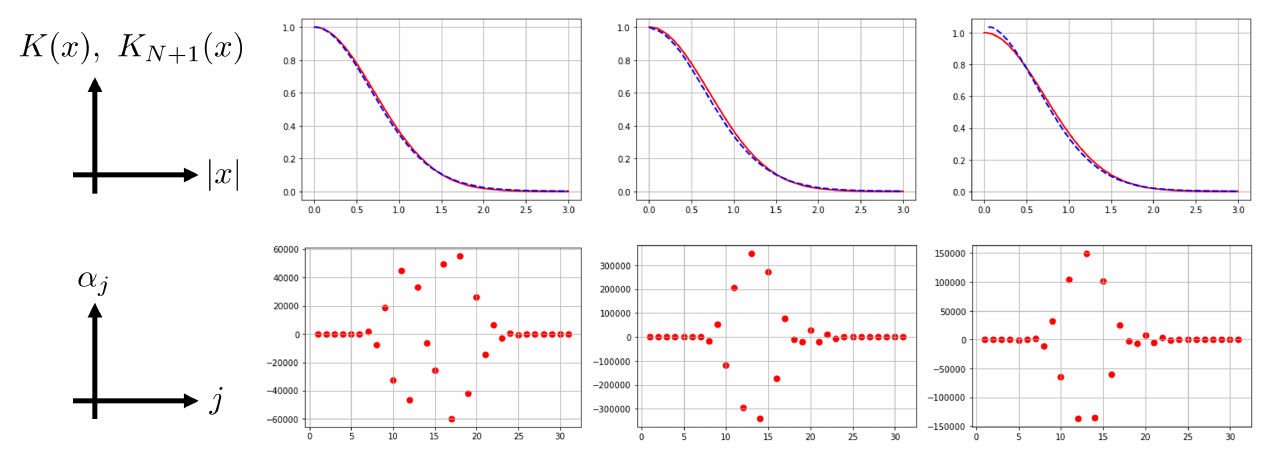}
\end{center}
\caption{\small{
Graphs of $K(x)=e^{-|x|^2}$ and $K_{N+1}$ with $N=30$ (top panels), the distributions of $\{\alpha_j\}_{1\le j\le N+1}$ (bottom panels) by using the Bernstein polynomial.
The graph of $K$ and $K_{N+1}$ are shown by the solid line and the dashed line, respectively.
The numerical examples correspond to the results with $n=1$ (left), $n=2$ (middle), and $n=3$ (right), respectively.
}}
\label{fig:app}
\end{figure}
\begin{figure}[bt] 
\begin{center}
	\includegraphics[width=11cm, bb= 0 0 1174 446]{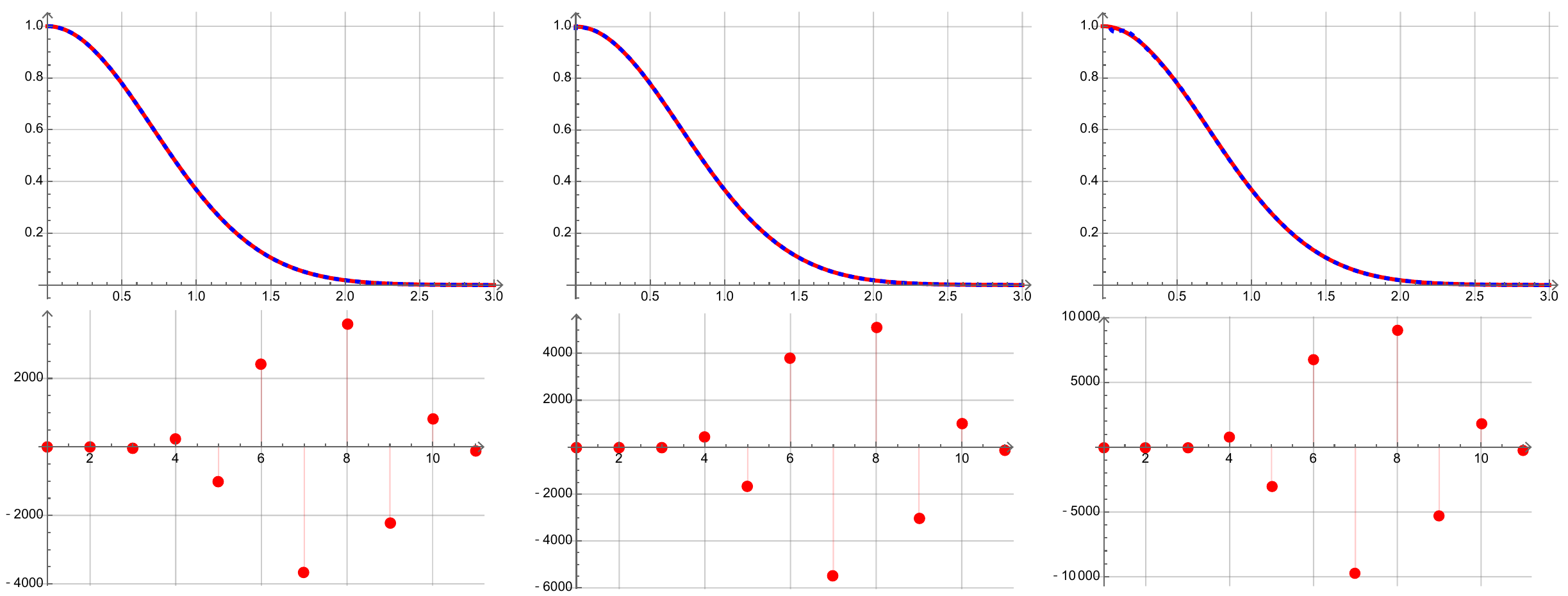}
\end{center}
\caption{\small{Graphs of $K=e^{-|x|^2}$ and $K_{N+1}$ with $N=10$ (top panels), the distributions of $\{\alpha_j\}_{1\le j\le N+1}$ (bottom panels) by using the Lagrange polynomial with the Chebyshev nodes.
The graph of $K$ and $K_{N+1}$ are shown by the solid line and the dashed line, respectively.
The numerical examples correspond to the results with $n=1$ (left), $n=2$ (middle), and $n=3$ (right), respectively.
}}
\label{fig:Lag}
\end{figure}
\begin{figure}[bt] 
\begin{center}
	\includegraphics[width=11cm, bb= 0 0 1174 434]{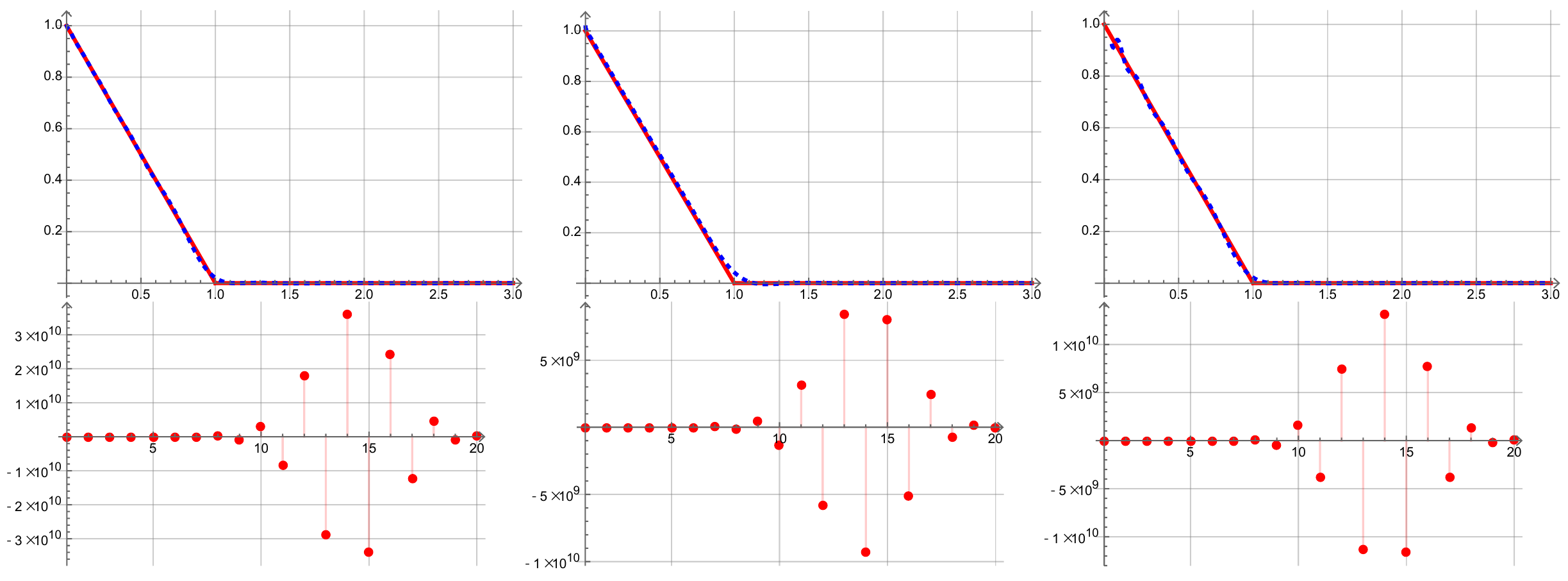}
\end{center}
\caption{\small{Graphs of $K(x)=(1-|x|)\chi_{B(1)}(x)$ and $K_{N+1}$ with $N=19$ (top panels), the distributions of $\{\alpha_j\}_{1\le j\le N+1}$ (bottom panels) by using the Lagrange polynomial with the Chebyshev nodes.
The graph of $K$ and $K_{N+1}$ are shown by the solid line and the dashed line, respectively.
The numerical examples correspond to the results with $n=1$ (left), $n=2$ (middle), and $n=3$ (right), respectively.
}}
\label{fig:Hap}
\end{figure}

Figure \ref{fig:Hap} shows the numerical results for the case of an integral kernel with compact support as in Example \ref{ex:Hp}.
Only two-dimensional case does not satisfy Assumption \ref{ass:kernel}.
It can be observed that the approximation by the linear sum of the Green functions $K_{N+1}$ converges for the continuous function in each dimension.
This indicates that Assumption \ref{ass:kernel} in the case that $n=2$ may be relaxed.

\section{Reaction-diffusion approximation of nonlocal interactions}\label{sec:main}
 Now we explain the proof of the main results.
 
\begin{proof}[Proof of Theorem \ref{thm:rda}]
{\red Let $K\in L^1(\bR^n)$ be a radial function.
From Theorem~\ref{thm:ker-gen},
for any $\eps > 0$, there exist $N \in \bN$, constants $\{ \alpha_j \}_{1\le j\le N}$ and positive constants $\{ d_j \}_{1\le j\le N}$ such that $\| K -K_N \|_{L^1} \le \eps$.
}
Then, from Lemma \ref{lem:err1} there exists $C_{3}= C_{3}(f,K,T)>0$ such that
\beaa
\|u(t) - u^{0}(t) \|_{L^p} \le C_{3}\|u_0\|_{L^p} \|K-K_N\|_{L^1} \le C_{3}\|u_0\|_{L^{p}} \eps
\eeaa
holds for any $t\in (0,T]$.
Moreover, there exists a positive constant \\
$C_{1}= C_{1}(f,D,\{\alpha_j\}_{1\le j\le N},\{d_j\}_{1\le j\le N},T)$ such that
\beaa
    \|u^{\delta}(t)-u^{0}(t)\|_{L^p} &\le& C_{1}\|u_0\|_{L^p}\delta, \\
    \|v^{\delta}_{j}(t)-(k_j*u^{0})(t)\|_{L^p} &\le& C_{1}\|u_0\|_{L^{p}} \delta    
    \eeaa
hold for any $t\in (0,T]$ from Proposition \ref{lem:err2}.
Thus, for any $t\in (0,T]$, we have
\beaa
\|u(t)-u^{\delta}(t)\|_{L^p} &\le & \|u^{\delta}(t)-u^{0}(t)\|_{L^p}+ \|u(t) - u^{0}(t) \|_{L^p}  \\
&\le & (C_{1}\delta+C_{3}\eps)\|u_0\|_{L^{p}}
\eeaa
and
{\red
\beaa
\|v^{\delta}_j(t) -(k_j* u)(t)  \|_{L^p} &\le& \|v^{\delta}_j(t) -(k_j* u^0)(t)  \|_{L^p} +  \|(k_j*u^0)(t) -(k_j* u)(t)  \|_{L^p} \\
&\le & (C_{1}\delta+C_{3}\eps)\|u_0\|_{L^{p}}
\eeaa
}
from the Young inequality.
Therefore, the proof is complete.
\end{proof}

\section{Concluding remarks}\label{sec:con}
{\red In this paper, we have demonstrated that in any Euclidean space, the solution of the nonlocal evolution equation with any radial integral kernel can be approximated by that of a reaction-diffusion system with auxiliary factors.}
We showed that the reaction-diffusion system coupled with auxiliary activators and inhibitors can approximate the time evolution governed by  arbitrary nonlocal interactions over any finite time interval.
This is achieved by considering the quasi-steady state of the auxiliary factors.
{\red Within this framework, the parameters of the reaction–diffusion system can be explicitly specified according to the shape of the integral kernel, at least in spatial dimensions up to three.}

Cell-cell interactions can be formulated as an integral kernel with compact support as in Fig. \ref{fig1} (a).
Cell can realize the interaction necessary to generate the pattern by changing the way they make contact.
The results in this paper indicates that this change may be equivalent to precisely tuning parameters in multi-component reaction-diffusion system in a high-dimensional space as in Fig. \ref{fig:Hap}.
In this sense, cell-cell interactions are a reasonable mechanism for creating patterns.
On the other hand, reaction-diffusion systems are versatile because they can reproduce phenomena with seemingly different mechanisms in high-dimensional spaces. 
Therefore, in high-dimensional space, reaction, diffusion and quasi-steady state can produce a variety of nonlocal interactions.

Based on our methodology, nonlocal problems can be reformulated within the framework of reaction-diffusion systems, and {\it vice versa}.
For instance, the theoretical framework of the $n$-component reaction-diffusion system can be applied to analyze the nonlocal problems.
Conversely, insights from nonlocal problems can also be leveraged to study multi-component reaction-diffusion systems.


The key aspect in this reaction-diffusion approximation is the approximation result to any radial kernel by a linear sum of the Green function $k_j$ in $L^1(\R^n)$ space.
{\red The error term can, in general, be represented by a polynomial approximation, valid up to three spatial dimensions.}
We employed the Bernstein and Lagrange polynomials to obtain the convergence.
The other polynomials that have good properties for the approximation can be candidates for $L^1$ convergence.
Regarding this convergence result, we remark that the result of the expansion by $k_j$ in the case of $H^{m}(\mathbb{R}^n)$ space with arbitrary $n \in \bN$ and $m \in \bN \cup \{ 0 \}$ is obtained by \cite{IT}.
As in Remark \ref{rem:Cm}, if $h\in C^m([0,1])$ for $m \in \bN$, the error $\eps$ converges to $0$ in the order $O(N^{-m})$.
Thus, it is possible to obtain numerical solutions at low cost and quickly, especially for nonlocal evolution equations with smooth integral kernels by applying this approximation method to the numerical simulations.

For the equations with advective nonlocal interactions such as cell adhesion model \eqref{eq:cell-ad} and the nonlocal Fokker-Planck equation, the expansion result by the Green function in $W^{1,1}(\R^n)$ is necessary for this type of approximation by system of partial differential equations.
Therefore, Theorem \ref{thm:w11} can also be useful in the context of such study.

\section*{Acknowledgments}
The authors were partially supported by JSPS KAKENHI Grant Number 24H00188.
HI was partially supported by JSPS KAKENHI Grant Numbers 23K13013 {\red and 25K07142}. YT was partially supported by JSPS KAKENHI Grant Number 22K03444 and 24K06848.

\section*{Conflict of interest}
The authors declare that they have no conflict of interest.

\begin{appendices}
\renewcommand{\thesection}{\Alph{section}} 
\makeatletter
\renewcommand\@seccntformat[1]{\appendixname\ \csname the#1\endcsname.\hspace{0.5em}}
\makeatother

\section{Existence of a global solution to nonlocal problem}
\label{app:non}

We first consider the existence of the mild solution to \eqref{pro:non}.
The case that $p=+\infty$ is simpler than the case that $1\le p<+\infty$, thus the discussion is omitted here.
For $T>0$ and $1\le p< +\infty$, we define the Banach space $X(T,p):= C([0,T];BC(\bR^n)\cap L^p(\bR^n))$ and the norm
\beaa
\|\phi\|_{X(T,p)}:= \sup_{0\le t\le T}\|\phi(t)\|_{L^{\infty}} + \sup_{0\le t\le T}\|\phi(t)\|_{L^{p}}.
\eeaa
We first check the property of $\cP[\phi]$.

\begin{lemma}
    $\cP:X(T,p)\to X(T,p)$ is well-defined. Moreover, for any $u_0\in BC(\bR^n)\cap L^p(\bR^n)$, we have
    \beaa
    \dlim_{t\to +0} \|\cP[\phi](t) - u_0\|_{L^{p}} =0
    \eeaa
    and 
    \beaa
    \dlim_{t\to +0}\cP[\phi](t,x) = u_0(x)
    \eeaa
    for any $x\in\bR^n$.
\end{lemma}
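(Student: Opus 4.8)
The plan is to verify the three assertions in turn, relying throughout on two elementary bounds that I would record first. Since $\|H(t;D)\|_{L^1}=1$, Young's inequality gives the contraction $\|H(t;D)*g\|_{L^q}\le\|g\|_{L^q}$ for every $q\in[1,+\infty]$. Moreover, because $f(0,0)=0$ and $f$ is globally Lipschitz with constant $C_f$, combined with $\|(K*\phi)(s)\|_{L^q}\le\|K\|_{L^1}\|\phi(s)\|_{L^q}$, one obtains the pointwise-in-time bound on the nonlinearity
\[
\|g(s)\|_{L^q}\le C_f(1+\|K\|_{L^1})\|\phi(s)\|_{L^q},\qquad g(s):=f(\phi(s),(K*\phi)(s)),\ q\in\{p,\infty\}.
\]
Every subsequent estimate is an immediate consequence of these two facts.

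For well-definedness I would establish three points. First, boundedness: applying the two bounds to the Duhamel formula yields $\sup_{0\le t\le T}\|\cP[\phi](t)\|_{L^q}\le\|u_0\|_{L^q}+C_f(1+\|K\|_{L^1})T\sup_{0\le s\le T}\|\phi(s)\|_{L^q}<+\infty$ for $q\in\{p,\infty\}$, so each $\cP[\phi](t)$ lies in $BC(\bR^n)\cap L^p(\bR^n)$ with finite norms. Second, spatial continuity: for $t>0$ the term $H(t;D)*u_0$ is smooth in $x$, while the Duhamel integrand $(H(t-s;D)*g(s))(x)$ is continuous in $x$ and dominated by $\|g(s)\|_{L^\infty}$, which is integrable in $s$ over $[0,t]$; dominated convergence then gives continuity of $x\mapsto\cP[\phi](t,x)$. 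Third, continuity in $t$ on $(0,T]$: using $\|H(t;D)-H(t';D)\|_{L^1}\to0$ as $t'\to t$ (smoothness of the heat kernel in $t$ away from $0$), the heat term is continuous in both $L^p$ and $L^\infty$; for the Duhamel term I would split $I(t')-I(t)$ into the new slab $\int_t^{t'}H(t'-s;D)*g(s)\,ds$, controlled by $\int_t^{t'}\|g(s)\|_{L^q}\,ds$, and the remainder $\int_0^t[H(t'-s;D)-H(t-s;D)]*g(s)\,ds$, controlled by dominated convergence since the $L^1$-difference of the kernels tends to $0$ for a.e.\ $s$ and stays bounded.

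For the limits at $t\to+0$ I would use the same splitting $\cP[\phi](t)-u_0=(H(t;D)*u_0-u_0)+\int_0^t H(t-s;D)*g(s)\,ds$. In $L^p$ (with $1\le p<+\infty$) the first term tends to $0$ by strong continuity of the heat semigroup at $t=0$, and the second is bounded by $C_f(1+\|K\|_{L^1})\,t\,\sup_{0\le s\le T}\|\phi(s)\|_{L^p}\to0$. For the pointwise limit the Duhamel part is again $O(t)$ uniformly in $x$, whereas for the heat part I would invoke the approximate-identity estimate
\[
H(t;D)*u_0(x)-u_0(x)=\int_{\bR^n} H(t,y;D)\,(u_0(x-y)-u_0(x))\,dy,
\]
splitting the integral into $|y|<\eta$, small by continuity of $u_0$ at the fixed point $x$, and $|y|\ge\eta$, small because the heat kernel concentrates its unit mass near the origin.

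The hard part will be the behaviour at $t=0$: the heat semigroup is not strongly continuous on $L^\infty$ for merely bounded continuous data, so the $t\to+0$ convergence cannot be upgraded to the sup-norm, which is exactly why the lemma records the limit only in $L^p$ and pointwise. Securing these two weaker modes cleanly — the $L^p$ mode from semigroup continuity and the pointwise mode from the approximate-identity estimate using only continuity of $u_0$ at the given point — is the delicate step; by contrast, the boundedness and the interior-time continuity are routine once the two opening bounds are in place.
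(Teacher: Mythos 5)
Your proposal is correct and follows essentially the same route as the paper: the same Young-inequality bounds giving $\|\cP[\phi](t)\|_{L^q}\le\|u_0\|_{L^q}+C_f(1+\|K\|_{L^1})T\|\phi\|_{X(T,p)}$, the same splitting $\cP[\phi](t)-u_0=(H(t;D)*u_0-u_0)+O(t)$, and the same reliance on the $L^p$ and pointwise approximate-identity properties of the heat kernel (which the paper simply cites as known, while you sketch their proofs). The only substantive addition is that you explicitly verify continuity of $t\mapsto\cP[\phi](t)$ on $(0,T]$, a point the paper passes over silently; this is a welcome but minor completion rather than a different argument.
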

\begin{proof}
    Let $\phi\in X(T,p)$ and $t\in (0,T]$.
    Then, we obtain that
    \beaa
    \|\cP[\phi](t)\|_{L^{p}} &\le & \| u_0 \|_{L^p} + C_f (1+\|K\|_{L^1}) \int^{t}_{0}\|\phi(s)\|_{L^p}ds \\
    &\le & \| u_0 \|_{L^p} + C_f T (1+\|K\|_{L^1}) \|\phi\|_{X(T,p)}
    \eeaa
    from the Young inequality.
    A similar argument yields that
    \beaa
    \|\cP[\phi](t)\|_{L^{\infty}} &\le & \| u_0 \|_{L^\infty} + C_f T (1+\|K\|_{L^1}) \|\phi\|_{X(T,p)}.
    \eeaa
    Moreover, since $\phi(t)$, $(K*\phi)(t)$ and $H(t,D)*u_0$ belong to $BC(\bR^n)$, we have $\cP[\phi](t)\in BC(\bR^n)$. 
    Thus, the desired assertion is verified if the convergence of the initial datum is obtained.
    Since $u_0$ is a bounded continuous function in $L^p(\bR^n)$,
    it is known that
    \beaa
    \dlim_{t\to +0} \|H(t;D)*u_0 - u_0\|_{L^{p}} =0
    \eeaa
    and 
    \beaa
    \dlim_{t\to+0}(H(t;D)*u_0)(x) = u_0(x)
    \eeaa
    hold for any $x\in\bR^n$. Therefore, we obtain
    \beaa
    \|\cP[\phi](t) - u_0\|_{L^{p}} 
    & \le& \|H(t;D)*u_0 - u_0\|_{L^{p}} + C_f (1+\|K\|_{L^1}) \int^{t}_{0}\|\phi(s)\|_{L^p} ds \\
    &\le&  \|H(t;D)*u_0 - u_0\|_{L^{p}}+ C_f t (1+\|K\|_{L^1}) \|\phi\|_{X(T,p)} \\
    &\to& 0 \quad (t\to+0).
    \eeaa
    Similarly, for any $x\in\bR^n$, we have
    \beaa
    |\cP[\phi](t,x) - u_0(x)| 
    &\le& |(H(t;D)*u_0)(x)-u_0(x)| +C_f t (1+\|K\|_{L^1}) \|\phi\|_{X(T,p)} \\
    &\to& 0 \quad (t\to+0). 
    \eeaa
    The proof is complete.
\end{proof}

Next, we show the existence of the mild solution to \eqref{pro:non}.

\begin{proposition}
    There exists a unique mild solution $u\in C([0,T];BC(\bR^n)\cap L^p(\bR^n))$ to \eqref{pro:non} with an initial datum
    $u_0\in BC(\bR^n)\cap L^p(\bR^n)$. 
\end{proposition}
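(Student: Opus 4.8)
The plan is to realize the mild solution as a fixed point of the map $\cP$ and to invoke the Banach fixed point theorem on the space $X(T,p)$. The preceding lemma already guarantees that $\cP$ maps $X(T,p)$ into itself and that $\cP[\phi](t)\to u_0$ as $t\to+0$ in the required senses, so the only remaining task is to establish a contraction estimate and then to arrange that it holds on the whole interval $[0,T]$ rather than merely for small times. By definition, a fixed point of $\cP$ is exactly a mild solution, so existence and uniqueness of the former give existence and uniqueness of the latter.

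For the contraction, I would take $\phi,\psi\in X(T,p)$ and subtract: the term $H(t;D)*u_0$ cancels, leaving only the Duhamel integral. Using the global Lipschitz bound on $f$ in the form $|f(\phi,K*\phi)-f(\psi,K*\psi)|\le C_f(|\phi-\psi|+|K*(\phi-\psi)|)$ together with Young's inequality, and recalling $\|H(t;D)*g\|_{L^q}\le\|g\|_{L^q}$ (since $\|H(t;D)\|_{L^1}=1$) and $\|K*g\|_{L^q}\le\|K\|_{L^1}\|g\|_{L^q}$ for $q\in\{p,\infty\}$, one obtains
\beaa
\|\cP[\phi](t)-\cP[\psi](t)\|_{L^q}\le C_f(1+\|K\|_{L^1})\int^{t}_{0}\|\phi(s)-\psi(s)\|_{L^q}\,ds.
\eeaa
Summing the $q=p$ and $q=\infty$ contributions then yields $\|\cP[\phi]-\cP[\psi]\|_{X(T,p)}\le C_f(1+\|K\|_{L^1})\,T\,\|\phi-\psi\|_{X(T,p)}$.

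To obtain a contraction on an arbitrary interval rather than only for small $T$, I would replace the norm of $X(T,p)$ by the equivalent weighted norm $\|\phi\|_{\lambda}:=\sup_{0\le t\le T}e^{-\lambda t}(\|\phi(t)\|_{L^{\infty}}+\|\phi(t)\|_{L^{p}})$. Repeating the estimate with this exponential weight and using $\int^{t}_{0}e^{-\lambda(t-s)}\,ds\le 1/\lambda$ replaces the factor $T$ by $C_f(1+\|K\|_{L^1})/\lambda$, which is strictly less than one once $\lambda$ is chosen large enough. Hence $\cP$ is a contraction on $(X(T,p),\|\cdot\|_{\lambda})$ for every $T>0$, and the Banach fixed point theorem produces a unique fixed point $u$, the desired mild solution. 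Alternatively, since the contraction constant here is independent of the data (because $f$ is globally Lipschitz), one could fix a small step $T_0<1/(C_f(1+\|K\|_{L^1}))$ and patch finitely many local solutions together to reach any prescribed $T$.

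Because every ingredient is a linear convolution estimate combined with the global Lipschitz assumption on $f$, I do not anticipate a genuine obstacle; the only point requiring care is making the contraction uniform in time, which the weighted-norm device settles cleanly. Uniqueness is then automatic from the uniqueness of the fixed point, or equivalently from a Gronwall argument applied to the difference of two mild solutions.
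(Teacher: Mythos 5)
Your argument is correct and follows essentially the same route as the paper: the paper also realizes the mild solution as the unique fixed point of $\cP$ on $X(T,p)$ equipped with the exponentially weighted norm $\|\phi\|_{\rho}=\sup_{0\le t\le T}e^{-\rho t}\|\phi(t)\|_{L^{\infty}}+\sup_{0\le t\le T}e^{-\rho t}\|\phi(t)\|_{L^{p}}$, obtaining the contraction constant $C_f(1+\|K\|_{L^1})/\rho<1$ for $\rho>C_f(1+\|K\|_{L^1})$ via the same Young-inequality estimates. The only cosmetic difference is that you sum the two components inside a single supremum rather than taking the sum of two suprema, which yields an equivalent norm.
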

\begin{proof}
    For $\rho>0$, we introduce the norm 
    \beaa
    \|\phi\|_{\rho}:= \sup_{0\le t\le T}e^{-\rho t}\|\phi(t)\|_{L^{\infty}} + \sup_{0\le t\le T}e^{-\rho t} \|\phi(t)\|_{L^{p}}.
    \eeaa
    We note that it is an equivalent norm to $\|\cdot\|_{X(T,p)}$.

    Fix $\rho> C_f (1+ \|K\|_{L^1})$ and $t\in (0,T]$.
    For $\phi_1,\phi_2 \in X(T,p)$, we have
    \beaa
    &&|\cP[\phi_1](t)- \cP[\phi_2](t)| \\
    &&\quad\quad \le  C_f \int^{t}_{0} H(t-s;D)* \left( |\phi_1(s)-\phi_2(s)| + | (K*\phi_1)(s) -(K*\phi_2)(s))| \right) ds.
    \eeaa
    Thus, we deduce
    \beaa
    &&e^{-\rho t}\|\cP[\phi_1](t)-\cP[\phi_2](t)\|_{L^p} \\
    &&\quad\quad \le  C_f \int^{t}_{0}e^{-\rho t} \left( \|\phi_1(s)-\phi_2(s)\|_{L^p} + \| (K*\phi_1)(s) -(K*\phi_2)(s)) \|_{L^p} \right) ds \\
    &&\quad\quad \le  C_f (1+ \|K\|_{L^1}) \int^{t}_{0} e^{-\rho (t-s)} e^{-\rho s} \|\phi_1(s)-\phi_2(s)\|_{L^p} ds\\
    &&\quad\quad
    = \dfrac{ C_f (1+ \|K\|_{L^1})}{\rho} \sup_{0\le t\le T} e^{-\rho t} \|\phi_1(t)-\phi_2(t)\|_{L^p}
    \eeaa
    from the Young inequality.
    A similar argument yields 
    \beaa
    \|\cP[\phi_1](t)-\cP[\phi_2](t)\|_{L^\infty} \le \dfrac{ C_f (1+ \|K\|_{L^1})}{\rho} \sup_{0\le t\le T}\|\phi_1(t)-\phi_2(t)\|_{L^\infty}.
    \eeaa
    Therefore, we conclude
    \beaa
    \|\cP[\phi_1]-\cP[\phi_2]\|_{\rho} \le \dfrac{ C_f (1+ \|K\|_{L^1})}{\rho} \|\phi_1-\phi_2\|_{\rho}.
    \eeaa
    Since $\cP:(X(T,p),\|\cdot \|_{\rho}) \to (X(T,p),\|\cdot \|_{\rho})$ is a contraction map, there is a unique fixed point $u\in X(T,p)$ from the Banach fixed point theorem.
\end{proof}

Fix $1\le p\le +\infty$ and $T>0$.
Let $u\in C([0,T];BC(\bR^n)\cap L^p(\bR^n))$ be the mild solution to \eqref{pro:non} with an initial datum $u_0\in BC(\bR^n)\cap L^p(\bR^n)$.
Next, we estimate the bound of the $L^q$ norm of the solution.
\begin{lemma}
For any $q\in [p,+\infty]$ and $T>0$, 
\beaa
\| u(t) \|_{L^q}  \le e^{C_f (1+ \|K\|_{L^1} ) t} \|u_0\|_{L^q}
\eeaa
holds for any $t \in [0,T]$.
\end{lemma}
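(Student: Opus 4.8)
The plan is to work directly from the mild-solution identity $u=\cP[u]$, that is,
\[
u(t)=H(t;D)*u_0+\int_0^t H(t-s;D)*f(u(s),(K*u)(s))\,ds,
\]
and to reduce the claim to a scalar integral inequality for the map $t\mapsto\|u(t)\|_{L^q}$ to which Gronwall's inequality applies. First I would note that the argument is meaningful for every $q\in[p,+\infty]$: since $u_0\in BC(\bR^n)\cap L^p(\bR^n)$ we have $u_0\in L^q(\bR^n)$ for all such $q$ by interpolation between $L^p$ and $L^\infty$, and the mild solution $u(t)\in BC(\bR^n)\cap L^p(\bR^n)$ enjoys the same property, so all the norms below are finite.

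The two ingredients I would invoke are both instances of Young's convolution inequality. The heat kernel is a probability density, $\|H(t;D)\|_{L^1}=1$, hence $\|H(t-s;D)*g\|_{L^q}\le\|g\|_{L^q}$; this is precisely what produces the leading constant $1$ in front of $\|u_0\|_{L^q}$ and prevents the semigroup from contributing any growth. For the reaction term I would combine the global Lipschitz hypothesis with $f(0,0)=0$ to obtain the pointwise bound $|f(u,K*u)|\le C_f(|u|+|K*u|)$, and then control the convolution via $\|(K*u)(s)\|_{L^q}\le\|K\|_{L^1}\|u(s)\|_{L^q}$, again by Young. Together these give
\[
\|f(u(s),(K*u)(s))\|_{L^q}\le C_f\bigl(1+\|K\|_{L^1}\bigr)\|u(s)\|_{L^q}.
\]

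Taking $L^q$ norms in the mild-solution identity and inserting these estimates yields the integral inequality
\[
\|u(t)\|_{L^q}\le\|u_0\|_{L^q}+C_f\bigl(1+\|K\|_{L^1}\bigr)\int_0^t\|u(s)\|_{L^q}\,ds,
\]
and a direct application of Gronwall's inequality delivers the claimed bound $\|u(t)\|_{L^q}\le e^{C_f(1+\|K\|_{L^1})t}\|u_0\|_{L^q}$.

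I do not expect a substantive obstacle: this is the standard a priori $L^q$ bound for a semilinear heat equation with a globally Lipschitz nonlinearity, and each step is routine. The only points requiring mild care are bookkeeping rather than analysis---verifying that $s\mapsto\|u(s)\|_{L^q}$ is continuous so that Gronwall applies (which follows from $u\in C([0,T];BC(\bR^n)\cap L^p(\bR^n))$ together with the interpolation remark above), and, if one wishes to be scrupulous, handling the endpoint $q=+\infty$ on the same footing, where the pointwise Lipschitz bound and $\|H(t;D)\|_{L^1}=1$ give the $L^\infty$ estimate verbatim.
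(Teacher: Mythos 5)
Your proof is correct and follows exactly the paper's argument: take $L^q$ norms in the mild-solution identity, use Young's inequality (with $\|H(t;D)\|_{L^1}=1$ and $\|K*u\|_{L^q}\le\|K\|_{L^1}\|u\|_{L^q}$) together with the Lipschitz bound on $f$ via $f(0,0)=0$, and conclude by Gronwall. The paper's version is just terser; your additional remarks on interpolation and continuity of $t\mapsto\|u(t)\|_{L^q}$ are sensible bookkeeping.
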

\begin{proof}
    Let $q \in[p,+\infty]$ and $t\in [0,T]$. Then, we have
    \beaa
    \|u(t)\|_{L^{q}} &\le& \|u_0\|_{L^q} + \int^{t}_{0} \| f(u(s),(K*u)(s))\|_{L^q} ds \\
    &\le& \|u_0\|_{L^q} + C_f (1+ \|K\|_{L^1}) \int^{t}_{0} \|u(s)\|_{L^q} ds
    \eeaa
    from the Young inequality.
    The Gronwall inequality yields
    \beaa
    \|u(t)\|_{L^q} \le  e^{C_f (1+ \|K\|_{L^1}) t}\|u_0\|_{L^q}.
    \eeaa
    Thus, we obtain the desired assertion.
\end{proof}

Let us consider the regularity of the mild solution.
From the general theory to the heat equation, we know that
\beaa
    H(\cdot;D)*u_0 \in C^{\infty}((0,+\infty)\times\bR^n).
\eeaa
Thus, we consider the regularity of the Duhamel term
\beaa
I(t,x) &:=& \int^{t}_{0} H(t-s;D)* f(u(s),(K*u)(s)) ds \\
&=& \int^{t}_{0} \int_{\bR^n} H(t-s,x-y;D)* f(u(s,y),(K*u)(s,y)) dy ds. \\
\eeaa
For sake of simplicity, we set
\beaa
g(t,x) := f(u(t,x),(K*u)(t,x)).
\eeaa
It is easy to see that $g\in C([0,T];BC(\bR^n))$.

\begin{lemma}\label{lem:duhamel}
    It holds that $I(t) \in BC^{1}(\bR^n)$ for all $t\in (0,T]$. 
    Moreover, $I(\cdot,x)\in C^{\alpha}([\tau,T])$ holds for any $\alpha\in(0,1)$, $\tau\in(0,T)$ and $x\in\bR^n$.
\end{lemma}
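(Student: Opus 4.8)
The plan is to prove the two assertions separately, exploiting in each case that every derivative can be transferred onto the heat kernel $H(t-s;D)$, where it produces a singularity in $t-s$ that is still integrable on $[0,t]$. Throughout I may use that $g\in C([0,T];BC(\bR^n))$, so $\|g\|_{\infty}:=\sup_{0\le s\le T}\|g(s)\|_{L^\infty}<+\infty$.

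For the spatial regularity I would first write, at least formally,
\[
\partial_{x_i} I(t,x) = \int_0^t \big(\partial_{x_i} H(t-s;D) * g(s)\big)(x)\, ds,
\]
using $\partial_{x_i} H(t,x;D) = -\frac{x_i}{2Dt} H(t,x;D)$. A direct computation gives the scaling $\|\nabla_x H(\sigma;D)\|_{L^1} = c(D,n)\,\sigma^{-1/2}$, so by the Young inequality each integrand is bounded in $L^\infty_x$ by $c\,(t-s)^{-1/2}\|g(s)\|_{L^\infty}$. Since $\int_0^t (t-s)^{-1/2}\,ds = 2\sqrt{t}<+\infty$, this majorant is integrable in $s$, which simultaneously justifies differentiation under the integral sign and yields $\partial_{x_i} I(t,\cdot)\in L^\infty(\bR^n)$. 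For continuity I would note that each $\partial_{x_i} H(t-s;D)*g(s)$ is uniformly continuous in $x$, because translation is continuous in $L^1$ and $\|H(\cdot-h)-H\|_{L^1}\,\|g(s)\|_{L^\infty}\to0$ as $h\to0$; dominated convergence with the same majorant then transfers this continuity to $\partial_{x_i} I(t,\cdot)$. The zeroth-order claim $I(t,\cdot)\in BC(\bR^n)$ follows by the identical but easier argument using $\|H(\sigma;D)\|_{L^1}=1$. Hence $I(t)\in BC^1(\bR^n)$.

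For the time regularity, fix $x$ and $\tau\le t_1<t_2\le T$ and split
\[
I(t_2,x) - I(t_1,x) = \int_{t_1}^{t_2}\!\big(H(t_2-s;D)*g(s)\big)(x)\,ds + \int_0^{t_1}\!\big((H(t_2-s;D)-H(t_1-s;D))*g(s)\big)(x)\,ds.
\]
The first term is harmless: since $\|H(\sigma;D)\|_{L^1}=1$, it is bounded in absolute value by $\|g\|_\infty(t_2-t_1)$. The core of the argument, and the main obstacle, is a sharp $L^1$ bound on the difference of two heat kernels evaluated at nearby times. To obtain it I would use $\partial_\sigma H(\sigma;D)=D\Delta H(\sigma;D)$ together with the scaling bound $\|\partial_\sigma H(\sigma;D)\|_{L^1}\le c'(D,n)\,\sigma^{-1}$, giving for $0<a<b$
\[
\|H(b;D)-H(a;D)\|_{L^1} \le \int_a^b \|\partial_\sigma H(\sigma;D)\|_{L^1}\,d\sigma \le c'\log\frac{b}{a}.
\]

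Applying this with $a=t_1-s$, $b=t_2-s$ and the Young inequality bounds the second term by $c'\|g\|_\infty\int_0^{t_1}\log\frac{t_2-s}{t_1-s}\,ds$, and an elementary primitive computation gives
\[
\int_0^{t_1}\log\frac{t_2-s}{t_1-s}\,ds = t_2\log t_2 - t_1\log t_1 - (t_2-t_1)\log(t_2-t_1).
\]
On $[\tau,T]$ with $\tau>0$ the map $t\mapsto t\log t$ is Lipschitz, so the first two terms are $O(t_2-t_1)$, while the last term satisfies $-(t_2-t_1)\log(t_2-t_1)\le C_\alpha (t_2-t_1)^\alpha$ for every $\alpha\in(0,1)$, because $\sigma^{1-\alpha}\log(1/\sigma)$ is bounded near $0$. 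Combining the two contributions yields $|I(t_2,x)-I(t_1,x)|\le C(\alpha,\tau,T,\|g\|_\infty)(t_2-t_1)^\alpha$, that is, $I(\cdot,x)\in C^\alpha([\tau,T])$. I expect the logarithmic kernel-difference estimate to be the decisive step: it is precisely the borderline $\sigma^{-1}$ singularity of $\partial_\sigma H$ that forces us to relinquish the full Lipschitz exponent and settle for any $\alpha<1$, while the restriction $\tau>0$ serves only to make $t\log t$ Lipschitz.
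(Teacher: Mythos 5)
Your proposal is correct and follows essentially the same route as the paper: the spatial part via $\|\partial_{x_j}H(\sigma;D)\|_{L^1}\lesssim \sigma^{-1/2}$ and Young's inequality, and the temporal part via the same splitting, the bound $\|\Delta H(\sigma;D)\|_{L^1}\lesssim \sigma^{-1}$ applied to $\partial_\sigma H = D\Delta H$, and the identical primitive $t_2\log t_2 - t_1\log t_1 - (t_2-t_1)\log(t_2-t_1)$. You merely spell out two steps the paper leaves implicit (continuity of $\partial_{x_j}I(t,\cdot)$ in $x$, and the passage from the logarithmic expression to the $C^\alpha$ bound), which is fine.
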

\begin{proof}
    We first consider the spatial derivative. 
    Fix $j=1,2,\ldots,n$ arbitrary.
    Since there is a constant $C_{H1}>0$ independent of $j$ such that
    \beaa
    \left\| \dfrac{\partial H}{\partial x_j}(t;D) \right\|_{L^1} \le \dfrac{C_{H1}}{\sqrt{t}}
    \eeaa
    holds for any $t>0$,
    we have
    \beaa
    \left| \dfrac{\partial I}{\partial x_j}(t,x) \right|
    &\le & \int^{t}_{0} \int_{\bR^n} \left| \dfrac{\partial H}{\partial x_j}(t-s,x-y;D)*g(s,y) \right|dy ds \\
    &\le& \left(  \int^{t}_{0} \left\| \dfrac{\partial H}{\partial x_j}(t-s;D) \right\|_{L^1} ds  \right)  \sup_{0\le t\le T}\|g(t)\|_{L^\infty} \\
    &=& 2 C_{H1} \sqrt{t}  \sup_{0\le t\le T}\|g(t)\|_{L^\infty}
    \eeaa
    for any $t>0$ and $x\in\bR^n$ from differentiating under the integral sign and the Young inequality. 
    Thus, for all $t\in(0,T]$, we obtain $\dfrac{\partial I}{\partial x_j}(t)\in BC(\bR^n)$ and thus conclude $I(t) \in BC^1(\bR^n)$.

    Next, we fix $\alpha\in(0,1)$, $\tau\in(0,T)$ and $x\in\bR^n$.
    Let $t_1,t_2\in [\tau,T]$ with $t_2>t_1$.
    Since we know that there is a constant $C_{H2}>0$ such that
    \beaa
    \|\Delta H(t;D)\|_{L^1} \le \dfrac{C_{H2}}{t}
    \eeaa
    holds for any $t>0$, we find
    \beaa
    |I(t_2,x)-I(t_1,x)| 
    &\le& \int^{t_1}_{0} |H(t_2-s;D) - H(t_1-s;D)|* |g(s)| ds\\
    &&\quad + \int^{t_2}_{t_1} H(t_2-s;D)* |g(s)| ds  \\
    &\le& \left(\int^{t_1}_{0} \int^{t_2}_{t_1} \left\| \dfrac{\partial H}{\partial t}(\eta-s;D)\right\|_{L^1}  d\eta ds \right)  \sup_{0\le t\le T} \|g(t)\|_{L^\infty}\\
    &&\quad + (t_2-t_1) \sup_{0\le t\le T}\|g(t)\|_{L^\infty} \\
    &=& \left(\int^{t_1}_{0} \int^{t_2}_{t_1} \left\| \Delta H(\eta-s;D)\right\|_{L^1}  d\eta ds \right)  \sup_{0\le t\le T} \|g(t)\|_{L^\infty}\\
    &&\quad + (t_2-t_1) \sup_{0\le t\le T}\|g(t)\|_{L^\infty} \\
    &\le& C_{H2} \left( t_2\log t_2 - t_1 \log t_1 -(t_2-t_1)\log(t_2-t_1)\right)  \sup_{0\le t\le T} \|g(t)\|_{L^\infty}\\
    &&\quad + (t_2-t_1) \sup_{0\le t\le T}\|g(t)\|_{L^\infty}.
    \eeaa
    Thus, we conclude $I(\cdot,x)\in C^{\alpha}([\tau,T])$. 
\end{proof}

In addition, the following lemma is provided for H{\"o}lder estimate for the solution.
\begin{lemma}\label{lem:Holder}
    Let $\gamma\in (0,1)$.
    Then, there exists a positive constant $C=C(\gamma)$ such that
    \beaa
    \left[ \dfrac{\partial H(t)}{\partial x_j}*\phi \right]_{\gamma} \le C t^{-(1+\gamma)/2} \|\phi\|_{L^{\infty}}
    \eeaa
    for any $t>0$, $j=1,2,\ldots,n$ and $\phi\in L^{\infty}(\bR^n)$, where $[\phi]_{\gamma}$ is H{\"o}lder seminorm.
\end{lemma}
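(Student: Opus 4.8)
The plan is to reduce the H\"older seminorm of $\partial_{x_j}H(t)*\phi$ to an $L^1$ modulus-of-translation estimate for the kernel $g:=\partial_{x_j}H(t;D)$ itself, and then to interpolate between two elementary bounds on that modulus. Writing $\psi:=\partial_{x_j}H(t)*\phi$, for $x\neq y$ we have $\psi(x)-\psi(y)=\int_{\bR^n}\big(g(x-z)-g(y-z)\big)\phi(z)\,dz$, so that, after the change of variables $u=x-z$ and setting $h:=x-y$,
\[
|\psi(x)-\psi(y)|\le \|\phi\|_{L^\infty}\int_{\bR^n}\big|g(u)-g(u-h)\big|\,du .
\]
Hence $\left[\psi\right]_{\gamma}\le \|\phi\|_{L^\infty}\,\sup_{h\neq 0}|h|^{-\gamma}\,\|g-g(\cdot-h)\|_{L^1}$, and it suffices to bound the translation modulus $\|g-g(\cdot-h)\|_{L^1}$ by $C|h|^{\gamma}\,t^{-(1+\gamma)/2}$ with $C$ independent of $h$ and $t$.

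First I would record two estimates for this modulus. The trivial one is $\|g-g(\cdot-h)\|_{L^1}\le 2\|g\|_{L^1}$. The refined one comes from the fundamental theorem of calculus, $g(u)-g(u-h)=\int_0^1\nabla g(u-\theta h)\cdot h\,d\theta$, which by Minkowski's integral inequality and translation invariance of the $L^1$ norm gives $\|g-g(\cdot-h)\|_{L^1}\le |h|\,\|\nabla g\|_{L^1}$. The two $L^1$ norms appearing here are then computed by the Gaussian scaling $H(t,x;D)=t^{-n/2}\Phi(x/\sqrt t)$ with $\Phi(\xi)=(4\pi D)^{-n/2}e^{-|\xi|^2/4D}$: differentiating once in space produces a factor $t^{-1/2}$ and differentiating twice a factor $t^{-1}$, so that $\|g\|_{L^1}=C_1 t^{-1/2}$ and $\|\nabla g\|_{L^1}\le C_2 t^{-1}$, where $C_1,C_2$ depend only on $n$ and $D$ (the first being essentially the constant $C_{H1}$ already used in Lemma \ref{lem:duhamel}).

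To conclude I would interpolate the two bounds. Since for nonnegative $a,b$ and $\gamma\in(0,1)$ one has $\min\{a,b\}\le a^{1-\gamma}b^{\gamma}$, taking $a=2C_1 t^{-1/2}$ and $b=C_2|h|\,t^{-1}$ yields
\[
\|g-g(\cdot-h)\|_{L^1}\le (2C_1)^{1-\gamma}C_2^{\gamma}\,|h|^{\gamma}\,t^{-(1-\gamma)/2-\gamma}=C\,|h|^{\gamma}\,t^{-(1+\gamma)/2},
\]
since the exponent of $t$ simplifies to $-(1+\gamma)/2$. Equivalently, one may split into the cases $|h|\le\sqrt t$ and $|h|>\sqrt t$, using the refined bound in the former and the trivial bound in the latter. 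Dividing by $|h|^{\gamma}$ and taking the supremum over $h\neq 0$ gives the claim with $C=C(\gamma)$ (the dependence on $n$ and $D$ being suppressed, as those are fixed throughout). There is no serious obstacle here; the estimate is classical, and the only points requiring care are the bookkeeping of the powers of $t$ under the Gaussian scaling and verifying that the interpolated exponents combine to exactly $-(1+\gamma)/2$.
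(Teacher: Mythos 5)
Your proof is correct and follows essentially the same route as the paper: both reduce the H\"older seminorm to the $L^1$ translation modulus $\|g-g(\cdot-h)\|_{L^1}$ of $g=\partial_{x_j}H(t;D)$ and then combine the trivial bound $2\|g\|_{L^1}\sim t^{-1/2}$ with the mean-value bound $|h|\,\|\nabla g\|_{L^1}\sim |h|\,t^{-1}$. The paper organizes this by first rescaling to $t=1$ and splitting on $|x|<1$ versus $|x|\ge 1$, which is exactly your interpolation $\min\{a,b\}\le a^{1-\gamma}b^{\gamma}$ (equivalently your split at $|h|=\sqrt{t}$), so there is nothing substantive to add.
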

\begin{proof}
    Let $\phi\in L^{\infty}(\bR^n)$.
    Fix $t>0$ and $j=1,2,\ldots,n$ arbitrary.
    For all $x,z\in\bR^n$, we find
    \beaa
    &&\left| \left(\dfrac{\partial H}{\partial x_j}*\phi\right)(t,x) - \left(\dfrac{\partial H}{\partial x_j}*\phi\right)(t,z) \right| \\
    && \le \|\phi\|_{L^{\infty}} \int_{\bR^n} \left|\dfrac{\partial H}{\partial x_j}(t,x-y) - \dfrac{\partial H}{\partial x_j}(t,z-y) \right| dy \\
    &&= t^{-1/2} \|\phi\|_{L^{\infty}} \int_{\bR^n} \left|\dfrac{\partial H}{\partial x_j}(1,t^{-1/2} (x-z)-y) - \dfrac{\partial H}{\partial x_j}(1,y) \right| dy \\
    &&=: t^{-1/2} \|\phi\|_{L^{\infty}} S(t^{-1/2}(x-z)).
    \eeaa
    Since $S(x)$ is a non-negative bounded continuous function on $\bR^n$ and satisfies 
    \beaa
    S(x) \le |x| \int_{\bR^n}  \left|  \dfrac{\partial}{\partial y_j}\nabla H(1,y)\right| dy 
    \eeaa
    from the mean value theorem,
    there exists a constant $C=C(\gamma)$ such that
    \beaa
    0\le \dfrac{S(x)}{|x|^{\gamma}} \le \sup_{|x|<1}\dfrac{S(x)}{|x|^{\gamma}} + \sup_{|x|\ge 1}S(x)  \le C.
    \eeaa
    Thus, we obtain
    \beaa
    \left| \left(\dfrac{\partial H}{\partial x_j}*\phi\right)(t,x) - \left(\dfrac{\partial H}{\partial x_j}*\phi\right)(t,z) \right| \le C t^{-(1+\gamma)/2} \|\phi\|_{L^{\infty}} |x-z|^{\gamma}.
    \eeaa
    The proof is complete.
\end{proof}

Fix $t\in(0,T]$ arbitrary.
From Lemma \ref{lem:duhamel}, we obtain $u(t)\in BC^1(\bR^n)$.
Moreover, since $f(u,v)$ is differentiable almost everywhere for any open set of $\bR^2$, 
we have 
\beaa
\|\nabla g(t) \|_{L^\infty} < +\infty.
\eeaa
Hence, same argument as in the proof of Lemma \ref{lem:duhamel} can be applied, 
leading to $u(t)\in BC^2(\bR^n)$.
By using Lemma \ref{lem:Holder}, 
we conclude $u(t)\in C^{2+\gamma}(\bR^n)$ for any $\gamma\in (0,1)$.

We utilize the result of the Schauder estimate, Theorem 9.1.2 by  \cite{Krylov}.
Let $\tau \in (0,T)$ and $\gamma\in (0,1)$.
Since $u(\tau)\in C^{2+\gamma}(\bR^n)$ and $g \in C^{\gamma/2,\gamma}([\tau,T]\times \bR^n)$, we obtain that the mild solution satisfies \eqref{eq:rd}
with $u \in C^{1+\gamma/2,2+\gamma}([\tau,T]\times \bR^n)$.
As $\tau$ is arbitrary, it follows that $u \in C^{1,2}((0,T]\times \bR^n)$.
Therefore, the mild solution $u$ becomes the unique classical solution to \eqref{eq:rd}.

\section{Existence of a global solution to 
reaction-diffusion system}
\label{app:rd}

Here, we describe the proof of Proposition \ref{pro:exi-app}.
Let us consider the existence of a mild solution to \eqref{eq:rd} with an initial condition \eqref{init}.

\begin{proposition}
    For any $T>0$, $\delta>0$ and $1\le p\le+\infty$, there exists a unique mild solution $(u^{\delta},v^{\delta}_1,\ldots,v^{\delta}_N)\in \{C([0,T];BC(\bR^n)\cap L^p(\bR^n))\}^{N+1}$ to \eqref{eq:rd} with an initial condition \eqref{init},
    where $u_0\in BC(\bR^n)\cap L^p(\bR^n)$. 
    This solution belongs to $\{C^{1,2}((0,T]\times\bR^n)\}^{N+1}$.
\end{proposition}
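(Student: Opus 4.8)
The plan is to exploit the triangular structure of the fixed-point formulation: a mild solution is by definition a tuple with $u=\Phi[u]$ and $v_j=\Psi_j[u]$, so the auxiliary components are \emph{slaved} to $u$. It therefore suffices to produce a fixed point $u\in X(T,p)$ of $\Phi$ alone and then simply \emph{define} $v_j:=\Psi_j[u]$ for $j=1,\ldots,N$. I would work on the same Banach space $X(T,p)$ and with the same weighted norm $\|\cdot\|_{\rho}$ used in Appendix~\ref{app:non}, treating $p=+\infty$ as the (simpler) limiting case.

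The first step is to record the mapping properties of $\Psi_j$. Since $\|k_j\|_{L^1}=1$ by Lemma~\ref{lem:green}, $\|H(t;d_j/\delta)\|_{L^1}=1$, and $\frac1\delta\int_0^t e^{-(t-s)/\delta}\,ds=1-e^{-t/\delta}\le 1$, the Young inequality gives $\Psi_j:X(T,p)\to X(T,p)$ together with the bound $\|\Psi_j[\phi](t)\|_{L^q}\le e^{-t/\delta}\|u_0\|_{L^q}+\sup_{0\le s\le t}\|\phi(s)\|_{L^q}$, and the convergence of the initial datum exactly as for $\cP$. The crucial quantitative fact is the Lipschitz bound
\begin{equation*}
\|\Psi_j[\phi_1](t)-\Psi_j[\phi_2](t)\|_{L^q}\le \frac1\delta\int_0^t e^{-(t-s)/\delta}\|\phi_1(s)-\phi_2(s)\|_{L^q}\,ds .
\end{equation*}
Combining this with the global Lipschitz continuity of $f$ and $\|H(t;D)\|_{L^1}=1$ shows that $\Phi:X(T,p)\to X(T,p)$ is well defined.

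The heart of the argument is the contraction estimate for $\Phi$ in $\|\cdot\|_{\rho}$. The term coming from the direct dependence of $f$ on $\phi$ contributes, as in Appendix~\ref{app:non}, a factor $C_f/\rho$. For the term mediated by the $v_j$, I would insert the Lipschitz bound above and estimate the nested time integral by $\frac1\delta\int_0^s e^{-(s-\eta)/\delta}e^{\rho\eta}\,d\eta\le e^{\rho s}/(1+\rho\delta)$; this yields a contribution bounded by $\frac{C_f}{\rho}\sum_{j=1}^N|\alpha_j|$. Altogether the Lipschitz constant of $\Phi$ is at most $\frac{C_f}{\rho}\bigl(1+\sum_{j=1}^N|\alpha_j|\bigr)$, which is strictly less than $1$ once $\rho>C_f\bigl(1+\sum_{j=1}^N|\alpha_j|\bigr)$. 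The Banach fixed point theorem then furnishes a unique $u=u^\delta\in X(T,p)$, and setting $v_j^\delta:=\Psi_j[u^\delta]$ gives the unique mild solution. The stated $L^q$ bounds follow by feeding the $\Psi_j$ bound into a Gronwall argument for $\|u^\delta(t)\|_{L^q}$, the detail of which is deferred to Appendix~\ref{app:rd}.

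Finally, the regularity claim $\{C^{1,2}((0,T]\times\bR^n)\}^{N+1}$ is obtained by the same Duhamel-smoothing and Schauder bootstrap as in Appendix~\ref{app:non}: the component $u^\delta$ solves an equation of exactly the same structure as \eqref{pro:non}, while each $v_j^\delta$ solves a heat equation with diffusivity $d_j/\delta$ and the bounded, H\"older-in-time source $(u^\delta-v_j^\delta)/\delta$, so Krylov's Schauder estimate applies verbatim. I expect the only genuine obstacle to be the doubly time-convolved dependence of $\Phi$ on $\phi$ through $\Psi_j$; the resolution is the observation that the extra space-time kernel $\frac1\delta e^{-(t-s)/\delta}H(t-s;d_j/\delta)$ carries total mass at most $1$, so it does not spoil the contraction and only rescales the constants by $\delta$-dependent factors.
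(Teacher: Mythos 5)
Your proposal is correct and follows essentially the same route as the paper, which simply declares the proof ``almost the same as the argument in Appendix~\ref{app:non}'': a Banach fixed point argument for $\Phi$ alone in the weighted norm $\|\cdot\|_{\rho}$ (with the $v_j$ slaved via $v_j=\Psi_j[u]$, exactly as the paper's definition of mild solution is set up), followed by the $L^q$ bounds of Appendix~\ref{app:rd} and the Duhamel--Schauder bootstrap for $C^{1,2}$ regularity. Your quantitative details --- the mass-one bound on the kernel $\frac1\delta e^{-(t-s)/\delta}H(t-s;d_j/\delta)$, the estimate $\frac1\delta\int_0^s e^{-(s-\eta)/\delta}e^{\rho\eta}\,d\eta\le e^{\rho s}/(1+\rho\delta)$, and the resulting contraction constant $\frac{C_f}{\rho}\bigl(1+\sum_{j=1}^N|\alpha_j|\bigr)$ --- are all consistent with the constants appearing in the paper's boundedness lemma.
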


This proof is almost same as the argument in Appendix \ref{app:non}.
It should be noted that the following result for the boundedness of the solution is obtained.
\begin{lemma}
For any $q\in [p,+\infty]$, 
\beaa
\| u^{\delta}(t) \|_{L^q}  &\le& \left(1 + \delta C_f \dsum^{N}_{j=1}|\alpha_j|  \right) \|u_0\|_{L^q} \exp\left( C_f \left(1 +  \dsum^{N}_{j=1}|\alpha_j|  \right) t \right),\\
\| v^{\delta}_{j}(t) \|_{L^q}  &\le& e^{-t/\delta}\|u_0\|_{L^q} +\sup_{0\le s\le t} \| u^{\delta}(s)\|_{L^q}, \quad(j=1,2,\ldots,N)
\eeaa
hold for all $t\in [0,T]$.
\end{lemma}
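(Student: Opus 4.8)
The plan is to run a Gronwall argument directly on the mild-solution identities $u^\delta = \Phi[u^\delta]$ and $v_j^\delta = \Psi_j[u^\delta]$, exploiting that both the heat kernel and each $k_j$ are $L^1$-normalized, so that convolution against them is an $L^q$-contraction for every $q\in[p,+\infty]$ via Young's inequality.

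First I would estimate the auxiliary components. Since $\|H(t;d_j/\delta)\|_{L^1}=1$ and $\|k_j\|_{L^1}=1$ by Lemma \ref{lem:green}, applying Young's inequality to the definition of $\Psi_j$ gives
\[
\|v_j^\delta(t)\|_{L^q}\le e^{-t/\delta}\|k_j*u_0\|_{L^q}+\frac{1}{\delta}\int_0^t e^{-(t-s)/\delta}\|u^\delta(s)\|_{L^q}\,ds.
\]
Here $\|k_j*u_0\|_{L^q}\le\|u_0\|_{L^q}$, and the elementary identity $\frac1\delta\int_0^t e^{-(t-s)/\delta}\,ds = 1-e^{-t/\delta}\le 1$ lets me bound the Duhamel term by $\sup_{0\le s\le t}\|u^\delta(s)\|_{L^q}$. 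This already yields the second asserted inequality.

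Next I would feed this back into $u^\delta=\Phi[u^\delta]$. Using $\|H(t;D)\|_{L^1}=1$, the Lipschitz bound on $f$ together with $f(0,0)=0$, and then the $v_j$ estimate just obtained, I would arrive at
\[
\|u^\delta(t)\|_{L^q}\le\Big(1+\delta C_f\sum_{j=1}^N|\alpha_j|\Big)\|u_0\|_{L^q}+C_f\Big(1+\sum_{j=1}^N|\alpha_j|\Big)\int_0^t U(s)\,ds,
\]
where $U(t):=\sup_{0\le s\le t}\|u^\delta(s)\|_{L^q}$, and where the bound $\int_0^t e^{-s/\delta}\,ds\le\delta$ produces the $\delta C_f\sum_{j=1}^N|\alpha_j|$ contribution.

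The one point requiring care --- and the main (if mild) obstacle --- is the $\sup$ that enters through the $v_j$ bound, which prevents a naive application of Gronwall to $\|u^\delta(t)\|_{L^q}$ alone. I would resolve it by observing that the right-hand side above is nondecreasing in $t$: replacing $t$ by any $t'\le t$ and taking the supremum over $t'\in[0,t]$ yields the closed integral inequality $U(t)\le\Theta+C\int_0^t U(s)\,ds$ with $\Theta=\big(1+\delta C_f\sum_{j=1}^N|\alpha_j|\big)\|u_0\|_{L^q}$ and $C=C_f\big(1+\sum_{j=1}^N|\alpha_j|\big)$. Gronwall's inequality then gives $U(t)\le\Theta e^{Ct}$, which is exactly the first asserted bound since $\|u^\delta(t)\|_{L^q}\le U(t)$; substituting this back into the $v_j$ estimate completes the proof.
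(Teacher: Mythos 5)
Your proof is correct and follows essentially the same route as the paper: Young's inequality (using $\|H\|_{L^1}=\|k_j\|_{L^1}=1$) for the $v_j^{\delta}$ bound, substitution into the Duhamel formula for $u^{\delta}$, and Gronwall. The only difference is at the nested integral: the paper keeps the double integral $\frac{1}{\delta}\int_0^t\int_0^s e^{-(s-\eta)/\delta}\|u^{\delta}(\eta)\|_{L^q}\,d\eta\,ds$ and bounds it by $\int_0^t\|u^{\delta}(s)\|_{L^q}\,ds$ by exchanging the order of integration, so that Gronwall applies to $\|u^{\delta}(t)\|_{L^q}$ directly, whereas you substitute the supremum form of the $v_j^{\delta}$ estimate and close the inequality for the running supremum $U(t)$ via a monotone-majorant argument --- both are valid.
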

\begin{proof}
    Let $q \in[p,+\infty]$. For any $t\in[0,T]$ and $j=1,2,\ldots,N$, we obtain that
    \beaa
    \|v^{\delta}_{j}(t)\|_{L^q} \le  e^{-t/\delta}\|u_0\|_{L^q} + \dfrac{1}{\delta} \int^{t}_{0} 
 e^{-(t-s)/\delta}\| u^{\delta}(s)\|_{L^q} ds
    \eeaa 
    from the Young inequality. Moreover, we have
    \beaa
    \|u^{\delta}(t)\|_{L^{q}} &\le& \|u_0\|_{L^q} + \int^{t}_{0} \left\| f\left(u^{\delta}(s),\dsum^{N}_{j=1}\alpha_j v^{\delta}_j(s)\right)\right\|_{L^q} ds \\
    &\le& \|u_0\|_{L^q} + C_f \left( \int^{t}_{0} \|u^{\delta}(s)\|_{L^q} ds + \int^{t}_{0}  \dsum^{N}_{j=1}|\alpha_j| \|v^{\delta}_j(s)\|_{L^q} ds \right) \\
    &\le& \|u_0\|_{L^q} + C_f  \int^{t}_{0} \|u^{\delta}(s)\|_{L^q} ds \\ 
    && \quad +C_f\left( \dsum^{N}_{j=1}|\alpha_j|\right) \left( \int^{t}_{0} e^{-s/\delta}\|u_0\|_{L^q} ds +  \dfrac{1}{\delta} \int^{t}_{0} \int^{s}_{0} 
 e^{-(s-\eta)/\delta}\| u^{\delta}(\eta)\|_{L^q} d\eta ds  \right) \\
     &\le& \left(1 + \delta C_f \dsum^{N}_{j=1}|\alpha_j|  \right) \|u_0\|_{L^q} + C_f \left(1 +  \dsum^{N}_{j=1}|\alpha_j|  \right) \int^{t}_{0} \|u^{\delta}(s)\|_{L^q} ds.
    \eeaa
    The Gronwall inequality yields that 
    \beaa
    \|u^{\delta}(t)\|_{L^q} \le  \left(1 + \delta C_f \dsum^{N}_{j=1}|\alpha_j|  \right) \|u_0\|_{L^q} \exp\left( C_f \left(1 +  \dsum^{N}_{j=1}|\alpha_j|  \right) t \right)
    \eeaa
    for all $t\in [0,T]$.
    Finally, we find
    \beaa
    \|v^{\delta}_{j}(t)\|_{L^q} &\le&  e^{-t/\delta}\|u_0\|_{L^q} + \dfrac{1}{\delta} \sup_{0\le s\le t} \| u^{\delta}(s)\|_{L^q} \int^{t}_{0} e^{-(t-s)/\delta} ds \\
    &\le&  e^{-t/\delta}\|u_0\|_{L^q} +  \sup_{0\le s\le t} \| u^{\delta}(s)\|_{L^q}
    \eeaa
    for any $t\in[0,T]$.
\end{proof}




\end{appendices}


\bibliography{sn-bibliography}

\end{document}